\newtheorem{Theorem}{Theorem}[section]
\newtheorem{Lemma}[Theorem]{Lemma}
\newtheorem{Proposition}[Theorem]{Proposition}
\newtheorem{Corollary}[Theorem]{Corollary}
\theoremstyle{definition}
\newtheorem{Definition}{Definition}[section]
\theoremstyle{Remark}
\newtheorem{Example}{Example}
\newtheorem{Remark}[Theorem]{Remark} 
\newtheorem{Problem}[Theorem]{Problem} 
\numberwithin{equation}{section}
\def\Vec#1{\mbox{\boldmath $#1$}}
\newcommand{\ad}{\operatorname{Ad}}
\newcommand{\nm}{\nabla^{\mu}}
\newcommand{\nt}{{}^{(t)} \nabla}
\newcommand{\can}{{}^{(-1)}\nabla}
\newcommand{\anti}{{}^{(1)}\nabla}
\newcommand{\neutral}{{}^{(0)}\nabla}
\newcommand{\sk}{\operatorname {skew}}
\newcommand{\sym}{\operatorname {sym}}
\begin{document}
\title{Homogeneous statistical manifolds}

\author[J.~Inoguchi]{Jun-ichi Inoguchi}
\address{Department of Mathematics, Hokkaido University, 
Sapporo, 060-0810, Japan}
\email{inoguchi@math.sci.hokudai.ac.jp}
\thanks{The first named author is partially supported by 
Kakenhi JP19K03461, JP23K03081}
%
\author[Y.~Ohno]{Yu Ohno}
\address{Department of Mathematics, Hokkaido University, 
Sapporo, 060-0810, Japan}
\email{ono.yu.h4@elms.hokudai.ac.jp}
\thanks{The second named author is supported by JST SPRING, Grant Number JPMJSP2119.}

\subjclass[2020]{Primary~53B12, 53C15 Secondary~53A15, 53C30}
\keywords{Lie group; Statistical manifold}
 \dedicatory{Dedicated to professor Takashi Kurose on the occasion of his 60th birthday}
\pagestyle{plain}

\begin{abstract} 
The methods of Information geometry have been glowing up to develop various subjects of theoretical physics, 
including quantum information systems. The present article has two purposes. 
The first one is to develop general theory of homogeneous statistical manifolds. 
In particular we construct explicit examples of homogeneous statistical manifolds 
of low dimension. The second purpose is to classify $3$-dimensional Lie groups 
admitting non-trivial conjugate symmetric left invariant statistical structure. 
\end{abstract}
\maketitle
\section*{Introduction}

According to Amari \cite{Amari}, 
``Information geometry is a method of exploring the world of information by means 
of modern geometry". Information geometry provides a differential 
geometric treatment of statistical models, 
especially, manifolds 
of probability distributions.

From differential geometric point of view, 
the notion of statistical manifold is the prominent clue 
of information geometry. 
A statistical manifold is defined as a triplet $(M,g,\nabla)$ consisting 
of a smooth manifold $M$, a Riemannian metric $g$ on $M$ and a 
linear connection $\nabla$ on $M$ compatible to $g$. A pair $(g,\nabla)$ 
consisting a Riemannian metric $g$ together with a compatible linear connection is 
referred as to a \emph{statistical structure}.
 
The Riemannian metrics of statistical manifolds have its origin 
in the fundamental work by Rao \cite{Rao} in 1945. Rao used Fischer information 
to define a Riemannian metric on a space of probability distributions. 
Efron\cite{Efron} extended Rao's idea to the higher-order 
asymptotic theory of statistical inference. Amari and Chentsov 
introduced linear connections on statistical models which are nowadays 
called \emph{Amari-Chentsov connections} \cite{AN}, independently.

Information geometry has been generalized for quantum information systems. 
For instance, Fujiwara \cite{Fujiwara} proved that 
every monotone metric on a two-level 
quantum state space admits a Hessian structure, 
\textit{i.e.}, flat statistical structure 
(see also \cite{CMH}).

Beyonds, statistics and differential geometry, 
the frame work of information geometry has been 
glowing up to develop various subjects of 
theoretical physics. Here we mention 
few examples. 
Matsueda and Suzuki \cite{MS} 
examined the Ba{\~n}ados–Teitelboim–Zanelli (BTZ) black 
hole in terms of the information geometry and 
considered what kind of quantum information produces 
the black hole metric in close connection with AdS/CFT correspondence. 
Ito and Ohga \cite{Ito,OhIt21,OhIt22} constructed 
an information-geometric structure for chemical thermodynamics. 
Tsuchiya and Yamashiro \cite{TY} 
discussed information metrics in field theory via gauge/gravity correspondence.
They consider a quantum information metric that measures 
the distance between the ground states of a CFT and a 
theory obtained by perturbing the CFT. 
They found a universal formula that represents the quantum 
information metric in terms of back reaction to the AdS bulk geometry. Recently, 
\cite{IP}, 
Iosifidis and 
Pallikaris 
proposed a bi-connection theory 
of gravity.

As is well known, in cosmology and general relativity, 
spacetimes are regarded as time-oriented Lorentzian $4$-manifolds, 
\textit{i.e.}, semi-Riemanninan 4-manifolds of signature $(-,+,+,+)$. 
Thus general relativity has been investigated by virtue of Lorenzian geometry. 
To construct specific spacetimes, especially 
solutions to Einstein's gravity equations, 
gluing of homogeneous spacetimes is a very effective technique. 
On this reason, homogeneous spacetimes have been paid much attention of 
researchers of general relativity as well as differential geometers. 
On the other hand, in information geometry, homogeneous statistical manifolds 
have not been well studied, yet. The reason is the lack 
of explicit examples. 
To provide particularly nice classes of statistical manifolds, 
we need ample explicit examples 
of homogeneous statistical manifolds.

The present article has two purposes. 
The first one is to develop general theory of 
homogeneous statistical manifolds. 
After establishing prerequisite knowledge on 
information geometry in Section \ref{sec:1}, we start our 
investigation of homogeneous statistical manifolds from Section 
\ref{sec:2}. A statistical manifold $(M,g,\nabla)$ is said to be a 
\emph{homogeneous statistical manifold} if there exits a 
Lie group $G$ acting transitively on $M$ so that the action is 
isometric with respect to $g$ and affine with respect to $\nabla$. As a 
result $M$ is expressed as a coset manifold 
$G/H$ where $H$ is an isotropy subgroup at some 
point.

In Riemannian geometry, 
Ambrose and Singer \cite{AS} proved that 
every homogeneous Riemannian manifold 
$M=G/K$ admits a certain tensor field $S$ called a
\emph{homogeneous Riemannian structure}. Conversely 
a Riemannian manifold $(M,g)$ together with a 
homogeneous Riemannian structure $S$ is locally 
homogeneous. By virtue of Ambrose-Singer theorem, 
our first result (Proposition \ref{prop:2.2.1}) 
says that a locally homogeneous 
statistical manifold is  characterized as  a Riemannian manifold
$(M,g)$ equipped with a homogeneous Riemannian structure $S$ and 
a cubic form $C$ which is parallel with respect the Ambrose-Siger connection 
$\tilde{\nabla}=\nabla^{g}+S$, where $\nabla^g$ is the Levi-Civita connection 
of the metric $g$.

As an application of Proposition \ref{prop:2.2.1}, in the next subsection 
\ref{sec:2.3}, we construct explicit examples of homogeneous Sasakian statistical manifolds. 
Roughly speaking, Sasakian manifolds are regarded as odd-dimensional counterparts of 
K{\"a}hler manifolds. In this decade, Sasakian manifolds, especially Sasaki-Einstein manifolds 
have been paid much attention by theoretical physicists. Because
the AdS/CFT correspondences is interpreted as a conjecture that the type IIB 
string theory on the product manifold $\mathrm{AdS}_5\times\mathrm{SE}_5$
of $1+4$-dimensional anti de Sitter spacetime 
$\mathrm{AdS}_5$ and Sasaki-Einstein $5$-manifold 
$\mathrm{SE}_5$ is equivalent to 4D N=1 superconformal quiver gauge theory. 
Sasakian manifolds have another origin on physics. 
Sasakian manifolds are contact manifolds equipped with special Riemannian metrics.
Theory of contact manifold has 
its origin in analytical mechanics and optics (Huygens' principle, wave fronts). 
Moreover in Thermodynamics, contact manifolds 
play a fundamental role (see \cite{HL}).

From Section \ref{sec:3} we concentrate our attention 
to homogeneous statistical manifolds with 
\emph{trivial isotropy}. Under this assumption, 
the homogeneous statistical manifold $M$ has the form 
form $M=G/\{e\}$, where $e$ is the 
identity element of the Lie group $G$. 
Moreover, $M$ is identified with the Lie group $G$ 
and the statistical structure $(g,\nabla)$ is a left invariant one on $G$.
A Lie group $G$ equipped with a left invariant 
statistical structure is called a \emph{statistical Lie group} \cite{FIK}. 
We prepare ingredients for the theory of left invariant linear connection and 
Riemannian metrics in Section \ref{sec:3} and 
Section \ref{sec:4}, respectively. Combining the results in these two sections, 
we develop a  general theory of left invariant statistical structure 
on Lie groups in Section \ref{sec:5}.
In Proposition \ref{prop:5.1} we give a general method to construct 
left invariant statistical structures on Lie groups from the 
infinitesimal data on the Lie algebra. 

It should be remarked that statistical structure is 
sometimes called a Codazzi structure. 
In Riemannian geometry, a Codazzi tensor is a 
symmetric $(0,2)$-tensor $h$ which satisfies (see Section \ref{sec:1.2}).
\[
(\nabla^{g}_{X}h)(Y,Z)=(\nabla^{g}_{Y}h)(X,Z).
\]
Obviously on a Riemannian manifold $(M,g,\nabla^{g})$ 
equipped with another Riemannian metric $h$ which is Codazzi, then 
$(M,h,\nabla^g)$ is a statistical manifold.

The notion of Codazzi tensor is closely related to Einstein metrics. Indeed, 
the Ricci tensor field $\mathrm{Ric}^g$ of an Einstein manifold 
$(M,g)$ is a Codazzi tensor field. More generally, 
the Ricci tensor field of a Riemannian manifold 
is a Codazzi tensor field if and only if the Riemannian curvature 
is a Yang-Mills gauge fields on the tangent bundle. 
On this reason, a Riemannian manifold $(M,g)$ is said to be a 
\emph{space of harmonic curvature} if its Ricci tensor field is 
Codazzi. Clearly the notion of space of harmonic curvature is a 
generalization of that of Einstein manifold. In Section \ref{sec:5.2}, 
we discuss left invariant Codazzi tensor fields on Lie groups.

Section \ref{sec:6} to Section \ref{sec:8} are devoted to the second purpose 
of this article, which is to construct explicit examples 
of statistical Lie groups in low-dimension. 
First we investigate statistical Lie groups of dimension $2$. 
In our previous work \cite{FIK}, we showed that the statistical manifold 
of normal distribution carries a natural statistical Lie group structure. 
Moreover we gave a purely differential 
geometric characterization of Amari-Chentsov $\alpha$-connections on the 
statistical manifold of the normal distribution. More precisely 
we proved that the only left invariant connection on the 
statistical Lie group of normal distributions compatible 
to the Fischer metric which satisfies the conjugate symmetry. 
Thus Amari-Chentsov $\alpha$-connections are characterized by 
a purely differential geometric property ``conjugate-symmetry" 
(see Section \ref{sec:1.8}). 

It should be emphasized that on a statistical manifold $(M,g,\nabla)$ we can not 
introduce the notion of sectional curvature as the manner in Riemannian geometry 
because $\nabla$ is non-metrical in general. Under the conjugate symmetry, one can introduce 
the notion of sectional curvature (\emph{statistical sectional curvature}). 

Next we will turn our attention to 
statistical Lie group of $t$-distributions (Section \ref{sec:6.4}).
The Student's $t$-distribution is a subfamily of 
$q$-normal distributions. 
The Boltzmann-Gibbs probability distribution, seen as a statistical model, belongs to the 
so-called exponential family. 
Motivated by Boltmann-Gibbs probability distribution, 
$q$-exponential function $\exp_q$ was introduced in \emph{Tsallis statistics}. 
Replacing the usual exponential function by $\exp_q$ in the definition
of statistical model of exponential families, one obtain the statistical
model of $q$-exponential family. Naudts \cite{Naudts} discussed 
the role of $q$-exponential families in physics. 
The distribution of the momentum of a single particle is a 
$q$-normal distribution. He discussed definition of the temperature of small isolated systems. 
In Section \ref{sec:6.4} we describe statistical Lie group structure for 
$t$-distributions. We expect statistical Lie group structure of $t$-distributions 
will be applied for statistical physics.

The Section \ref{sec:7} and \ref{sec:8} will be devoted for 
statistical Lie groups of dimension three. 
One may  expect that conjugate symmetric homogeneous 
statistical manifolds constitute a particularly nice 
class of statistical manifolds. 
In these two sections, we shall look for $3$-dimensional examples of 
conjugate symmetric statistical Lie groups. 
According to Milnor \cite{Milnor}, 
the class of 
$3$-dimensional Lie groups with left invariant Riemannian metric are 
divided into two classes; unimodular ones (Section \ref{sec:7}) and 
non-unimodular ones (Section \ref{sec:8}, see Milnor \cite{Milnor}). 
In Section \ref{sec:7}, we classify 
$3$-dimensional unimodular Lie groups 
which admit left invariant conjugate symmetric statistical 
structure. We prove that the only simply 
connected $3$-dimensional unimodular Lie groups
which admit \emph{non-trivial} left invariant conjugate symmetric statistical 
structure are Euclidean $3$-space $\mathbb{E}^3$, 
the universal covering of the Euclidean motion group $\widetilde{\mathrm{E}_2}$
and the special unitary group $\mathrm{SU}_2$
(Theorem \ref{thm:uni}).

In Section \ref{sec:8}, we prove that the only simply 
connected $3$-dimensional non-unimodular Lie groups
which admit \emph{non-trivial} left invariant conjugate symmetric statistical 
structure are the solvable Lie group model 
of the hyperbolic $3$-space and 
the product Lie group $\mathbb{H}^2\times\mathbb{R}$ of 
the solvable Lie group model of the hyperbolic plane and 
the real line (Theorem \ref{thm:nonuni}). 

This article will ends with proposing some future problems. 

\section{Statistical manifolds}\label{sec:1}
\subsection{Statistical structures}\label{sec:1.1}
Let $M$ be a manifold,
$g$ a Riemannian
metric and 
$\nabla$ a torsion free linear connection.
Then $\nabla$ is said to be 
\textit{compatible} to $g$ if
the covariant derivative 
$C:=\nabla g$ is symmetric. 

More explicitly $\nabla g$ satisfies
\[
(\nabla_{X}g)(Y,Z)=(\nabla_{Y}g)(X,Z)
\]
for all $X$, $Y$, $Z\in\mathfrak{X}(M)$. Here 
$\mathfrak{X}(M)$ denotes the Lie algebra 
of all smooth vector fields on $M$.

\begin{Definition}\label{def:kurose}
{\rm 
A pair $(g,\nabla)$ of a Riemannian metric 
and a compatible linear connection is
called a \emph{statistical structure}
on $M$. A manifold $M$ together with a statistical structure
is called a \emph{statistical manifold}. 
In particular a statistical manifold  $(M,g,\nabla)$
is traditionally called a \emph{Hessian manifold} 
if $\nabla$ is a flat connection \cite{Shima}.
}
\end{Definition}

A Riemannian manifold $(M,g)$ together
with Levi-Civita connection
$\nabla^{g}$ of $g$ is a typical
example of statistical manifold.
In other words, statistical
manifolds
can be regarded as generalizations
of Riemannian manifolds.

Let $TM$ and $T^{*}M$ be the tangent bundle and 
cotangent bundle of the a statistical manifold $(M,g,\nabla)$. 
Then the Riemannian metric $g$ 
induces a bundle isomorphism 
$\flat:TM \to T^{*}M$ by:
\[
(X^{\flat})Y=g(X,Y),\ \  X,Y 
\in \mathfrak{X}(M).
\]
Next the affine connection $\nabla$ induces a 
connection $\nabla^*$ on $T^{*}M$ by 
\[
(\nabla^*_{X}\omega)Y:=X(\omega(Y))-\omega(\nabla_{X}Y),\ \
\omega \in \mathfrak{X}^{*}(M),\ X,\ Y \in \mathfrak{X}(M).
\] 
The connection $\nabla^*$ is called the \emph{dual connection}
of $\nabla$. 
 
On the other hand, the 
\emph{conjugate connection} ${}^\dag\nabla$ of
$\nabla$ with respect to $g$ is 
introduced by the following formula:
\begin{equation}\label{eq:conjugate}
X \cdot 
g(Y,Z)=
g(\nabla_X Y,Z)+g(Y,{}^{\dag}\nabla_{X}Z),
\ X,Y,Z \in \mathfrak{X}(M).
\end{equation}
Obviously $\nabla={}^\dag\nabla$ if and only if $\nabla$ coincides with the 
Levi-Civita connection $\nabla^g$. 

Moreover we get $\flat \circ {}^{\dag}\nabla=\nabla^{*}\circ \flat$.
Namely, if we identify $TM$ with $T^{*}M$ via $\flat$, then 
${}^\dag\nabla$ is identified with $\nabla^*$.
On this reason, hereafter we denote the conjugate connection by 
$\nabla^*$ and call it \emph{dual connection}.

In 1990, Kurose \cite{Kurose} formulated the 
notion of statistical manifold as a 
semi-Riemannian manifold 
$(M,g)$ equipped with a pair $(\nabla,\nabla^{*})$ 
of linear connections satisfying 
the relation \eqref{eq:conjugate}. 
In this paper we restrict our attention to 
Riemannian metrics, \textit{i.e.}, 
positive 
definite semi-Riemannian metrics.

Define the tensor field $K$ of type 
$(1,2)$ by 
\[
g(K(X,Y),Z)=C(X,Y,Z)=(\nabla_{X}g)(Y,Z)
\]
for all $X$, $Y$, $Z$. 
For every vector field $X$, one 
can associate an endomorphism field $K(X)$ by
\[
K(X)Y=K(X,Y).
\]
Since $C$ is totally symmetric, $K(X)$ is 
self-adjoint with respect to $g$,
\textit{i.e.},
\[
g(K(X)Y,Z)=g(Y,K(X)Z)
\]
and satisfies
\[
K(X)Y=K(Y)X.
\]
We call this tensor field $K$ 
the \emph{skewness operator} of
$(N,g,\nabla)$.
The difference of
$\nabla$ and $\nabla^g$ is
given by
\[
\nabla-\nabla^g=-\frac{1}{2}K.
\] 
This formula implies that 
$\nabla^g$ is the ``mean" of $\nabla$ and 
$\nabla^{*}$:
\[
\nabla^g=\frac{1}{2}(\nabla+\nabla^{*}).
\]
As we have seen before, 
for every statistical manifold
$(M,g,\nabla)$, there exists a
naturally associated symmetric trilinear form $C$.

Conversely let $(M,g,C)$ be a Riemannian 
manifold with symmetric trilinear form 
$C$ (called a \emph{cubic form}). Then define the tensor field $K$ by
\[
g(K(X)Y,Z)=C(X,Y,Z).
\]
and an affine connection $\nabla$ by
$\nabla=\nabla^g-K/2$.
Then  $\nabla$ is of
torsion free and satisfies
$\nabla g=C$. 
Hence the triplet $(M,g,\nabla)$ becomes
a statistical manifold.

Thus to equip a statistical structure $(g,\nabla)$ 
is equivalent to equip a pair of structue
$(g,C)$ consisting of a 
Riemannian metric $g$ and a trilinear form $C$.
Lauritzen 
introduced the notion of statistical manifold as a Riemannian 
manifold $(M,g)$ together with a trilinear form $C$.

Nowadays, Definition \ref{def:kurose} due to Kurose \cite{Kurose1994} is the standard 
definition of statistical manifold.

\begin{Example}{Statistical models}
{\rm Let $M$ be a family of probability 
distributions. Assume that $M$ is 
represented as 
\[
M=\{p(x,\Vec{\xi})\>|\>\Vec{\xi}\in\Xi\},
\]
where 
$\Xi$ be a region 
of $\mathbb{R}^n$ with 
Cartesian coordinates 
$(\xi^1,\xi^2,\dots,\xi^n)$ (called the  
\emph{parametric space}).
In addition we assume that 
the correspondence 
$\Vec{\xi}\longmapsto p(\cdot,\Vec{\xi})$ is 
bijective. Then $M$ is said to be a 
\emph{statistical model} 
or \emph{parametric model} of the 
probability distribution 
determined by $p$. 

On an $n$-dimensional statistical model 
$M$, we set
\[
g_{ij}=E\left[
\frac{\partial}{\partial \xi^i}\log p(x,\Vec{\xi})\cdot
\frac{\partial}{\partial \xi^j}\log p(x,\Vec{\xi})
\right],
\]
where $E$ be the expectation 
determined by the pdf $p$. 
Here after we assume that 
\begin{enumerate}
\item $g_{ij}$ is finite and smooth.
\item the matrix valued function $(g_{ij})$ is positive definite.
\end{enumerate}
Then 
\[
g^{\mathsf F}=
\sum_{i,j=1}^{n}
g_{ij}\,
d\xi^{i}\,d\xi^{j},
\] 
is a Riemannian metric on $M$ called 
the \emph{Fisher metric} of $M$.

Let us introduce a one-parameter family of linear connections 
on $M$.
First we define 
\[
\varGamma_{ijk}^{(\alpha)}
:=E\left[
\left(
\frac{\partial^2\log p(x,\Vec{\xi})}{\partial \xi^i\xi^j}
+\frac{1-\alpha}{2}
\frac{\partial\log p(x,\Vec{\xi})}{\partial \xi^i}
\frac{\partial\log p(x,\Vec{\xi})}{\partial \xi^j}
\log p(x,\Vec{\xi})
\right)
\cdot
\left(
\frac{\partial\log p(x,\Vec{\xi})}{\partial \xi^k}
\right)
\right].
\]
 Next we set
\[
{}^{(\alpha)} \varGamma^{l}_{ij}:=\sum_{k=1}^{n} g^{lk}
\varGamma_{ijk}^{(\alpha)}.
\]
Then the family $\{{}^{(\alpha)} \varGamma^{l}_{ij}\}
_{1\leq i,j,k\leq n}$ defines a 
linear connection $\nabla^{(\alpha)}$.
One can see that the linear connection 
$\nabla^{(0)}$ coincides with 
the Levi-Civita connection $\nabla^{g^{\mathsf F}}$ of the 
Fischer metric. 
The linear connection $\nabla^{(1)}$ is simply denoted by 
$\nabla$ and called the \emph{exponential 
connection} (e-connection). The 
exponential connection is also denoted by
$\nabla^{ (\mathrm{e}) }$. 
On the other hand, 
$\nabla^{(-1)}$ is 
called the \emph{mixture 
connection} (m-connection). The 
mixture connection is also denoted by
$\nabla^{(\mathrm{m})}$.
One can see that 
$(M,g^{\mathsf F},\nabla)$ is a 
statistical manifold. Moreover the dual connection 
$\nabla^{*}$ of 
$\nabla=\nabla^{(\mathrm{e})}$ is 
$\nabla^{(\mathrm{m})}$. 
The linear connections 
$\nabla^{(\alpha)}$ are called the \emph{Amari-{\^ C}encov $\alpha$-connections} 
(Amari-Chentsov $\alpha$-connections).
}
\end{Example}

\begin{Example}[The exponential family]
{\rm A statistical model 
$M$ of the form
\[
M=\left\{
p(x,\Vec{\theta})\>
\biggr|
\>
 \exp\left[
C(x)+\sum_{i=1}^{n}\theta^{i}F_{i}(x)-\psi(\Vec{\theta})
 \right],\>
 \Vec{\theta}\in\Theta
 \right\}
 \]
 is called an \emph{exponential family}. 
 Here $C(x)$, $F_1(x),F_2(x),\dots,F_n(x)$ are smooth functions 
 of $x$ and $\psi$ is a smooth function 
 on $\Theta$. The coordines $\Vec{\theta}
 =(\theta^1,\theta^2,\dots,\theta^n)$ is 
 called a natural parameters. Note that 
 \[
 \psi(\Vec{\theta})=\log 
 \int \exp\left[C(x)+\sum_{i=1}^{n}\theta^{i}F_{i}(x)
 \right]\,
 dx.
 \]
 For instance, if we choose 
 \[
 C(x)=0,
 \quad 
 F_{1}(x)=x,\quad 
 F_{2}(x)=x^2,
 \quad 
 \theta^1=\frac{\mu}{\sigma^2},
 \quad 
 \theta^2=-\frac{1}{2\sigma^2},
 \]
 then 
 \[
 \psi(\theta^1,\theta^2)=
 -\frac{(\theta^1)^2}{4\theta^2}+
 \frac{1}{2}\log
 \left(-\frac{\pi}{\theta^2}\right).
 \]
 The probability distribution is 
 the normal distribution 
 $N(\mu,\sigma)$.
}
\end{Example}

\subsection{Codazzi tensor fields}\label{sec:1.2}

\begin{Definition}
{\rm Let $(M,g,\nabla^g)$ be a Riemannian manifold. 
A symmetric tensor field $h$ of type $(0,2)$ is said to be a 
\emph{Codazzi tensor field} if it satisfies 
\[
(\nabla^{g}_{X}h)(Y,Z)=(\nabla^{g}_{Y}h)(X,Z), 
\quad X,Y,Z\in\mathfrak{X}(M).
\]
}
\end{Definition}
According to this definition, we obtain.
\begin{Proposition}
Let $(M,g,\nabla^g)$ be a Riemannian manifold and $h$ a Riemannian metric on $M$. 
If $h$ is a Codazzi tensor field, then 
$(M,h,\nabla^g)$ is a statistical manifold.
\end{Proposition}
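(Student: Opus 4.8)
The plan is to verify directly that the pair $(h,\nabla^g)$ satisfies the defining conditions of a statistical structure from Definition \ref{def:kurose}, namely that $h$ is a Riemannian metric, that $\nabla^g$ is a torsion-free linear connection, and that $\nabla^g$ is compatible with $h$ in the sense that the covariant derivative $\nabla^g h$ is symmetric. The first two requirements are immediate: $h$ is a Riemannian metric by hypothesis, and $\nabla^g$ is the Levi-Civita connection of $g$, hence torsion free. So the entire content of the statement resides in the compatibility condition.

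To check compatibility, I would introduce the cubic form $C(X,Y,Z):=(\nabla^g_X h)(Y,Z)$ and show it is totally symmetric in its three arguments. Symmetry in the last two slots, $C(X,Y,Z)=C(X,Z,Y)$, is automatic, since $h$ is a symmetric $(0,2)$-tensor and covariant differentiation preserves this symmetry. Symmetry in the first two slots, $C(X,Y,Z)=C(Y,X,Z)$, is precisely the Codazzi identity $(\nabla^g_X h)(Y,Z)=(\nabla^g_Y h)(X,Z)$ assumed in the hypothesis. Since the transpositions of the pair $(Y,Z)$ and of the pair $(X,Y)$ generate the full symmetric group on the three arguments, $C$ is totally symmetric, which is exactly the statement that $\nabla^g$ is compatible with $h$.

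Combining these observations, $(h,\nabla^g)$ is a statistical structure and therefore $(M,h,\nabla^g)$ is a statistical manifold. I do not expect any genuine obstacle here: the proposition is essentially a reformulation of the definitions, the key point being the identification of the Codazzi equation for $h$ relative to $\nabla^g$ with the first-slot symmetry of the associated cubic form. The only care needed is to remember that the relevant connection is the Levi-Civita connection of the background metric $g$, and not of $h$, so that torsion freeness and the Codazzi hypothesis are the two facts one actually invokes.
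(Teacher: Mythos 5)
Your proof is correct and is exactly the argument the paper intends: the paper states this proposition as an immediate consequence of Definition \ref{def:kurose}, and your verification --- torsion freeness of $\nabla^g$, automatic symmetry of $(\nabla^g_X h)(Y,Z)$ in the last two slots, and the Codazzi identity supplying symmetry in the first two, with the two transpositions generating total symmetry --- spells out precisely why it is immediate. No gap, and no genuinely different route from the paper's (omitted) reasoning.
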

In some literature, statistical structure $(h,\nabla)$ is called a 
\emph{Codazzi structure} and statistical manifolds are called 
\emph{Codazzi manifolds}. However, the original notion of 
Codazzi tensor field, the connection 
is chosen as the Levi-Civita connection of the 
prescribed Riemannian metric.

Here we would like to mention about spaces of harmonic curvature 
in Riemannian geometry. The notion of space of harmonic curvature was introduced as 
a generalization of Einstein manifold. 
In addition spaces of harmonic curvature provide 
specific examples 
of Yang-Mills gauge-fields.

Let us assume that $(M,g,\nabla^g)$ be compact and oriented by the 
volume element $dv_{g}$. 
Then on a vector bundle $E$ over $M$ equipped with 
a fiber metric $g$ and a metric connection $D$. Then the 
\emph{Yang-Mills functional} over the affine space of 
metric connections of $E$ is defined by 
\[
\mathcal{Y}\!\mathcal{M}(D)=\int_{M}\frac{1}{2}|\!|R^{D}|\!|^2\,dv_{g},
\]
where $R^D$ is the curvature of $D$;
\[
R^{D}(X,Y)=D_{X}D_{Y}-D_{Y}D_{X}-D_{[X,Y]},
\quad X,Y\in\mathfrak{X}(M).
\]
Critical points of the Yang--Mills functional is 
called \emph{Yang-Mills connections}. 
It is known that the Levi-Civita connection $\nabla^g$ is a
Yang-Mills connection on the tangent bundle 
if and only if its Ricci tensor field 
is a Codazzi tensor field. 
Based on this fact, a Riemannian manifold 
$(M,g,\nabla^g)$ is said to be a 
\emph{space of harmonic curvature} if its 
Ricci tensor field is a Codazzi tensor field even if $M$ is non-compact. 
Obviously, Riemannian manifolds 
with parallel Ricci tensor field, especially 
Einstein manifolds, are 
spaces of harmonic curvature.

\subsection{Curvature tensor fields}\label{sec:1.3}
On a statistical manifold $(M,g,\nabla)$, 
the curvature tensor fields
\[
R(X,Y)=[\nabla_{X},\nabla_{Y}]-\nabla_{[X,Y]},
\quad 
R^{*}(X,Y)=[\nabla^{*}_{X},\nabla^{*}_{Y}]-\nabla^{*}_{[X,Y]}
\]
of $\nabla$ and $\nabla^{*}$ are related by
\[
g(R(X,Y)Z,W)+g(Z,R^{*}(X,Y)W)=0.
\]
Namely, the adjoint operator $R(X,Y)^{*}$ of the endomorphism 
field $R(X,Y)$ with respect to $g$ is $-R^{*}(X,Y)$. 
Thus, when $\nabla$ is flat, 
\textit{i.e.}, $R=0$, $\nabla^{*}$ is also flat.

The curvature tensor field $R$ of $\nabla$ is related to 
the Riemannian curvature $R^{g}=R^{(0)}$ of $g$ by 
\[
R(X,Y)Z=
R^g(X,Y)Z+\frac{1}{4}[K(X),K(Y)]Z
-\frac{1}{2}\left(
(\nabla^{g}_{X}K)(Y,Z)-(\nabla^{g}_{Y}K)(X,Z)
\right).
\]

\subsection{Ricci tensor fields}\label{sec:1.4}
Let us denote by $\mathrm{Ric}$ the \emph{Ricci tensor field} of 
$(M,\nabla)$. It should be remarked that 
$\mathrm{Ric}$ is not 
necessarily symmetric. Here we recall the following characterization.

\begin{Proposition}
On a manifold $M$ an torsion free linear 
connection $\nabla$ has symmetric 
Ricci tensor field if and only if 
there exists a volume element $\omega$ parallel with respect 
to $\nabla$.
\end{Proposition}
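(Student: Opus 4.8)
The plan is to reduce the symmetry of $\mathrm{Ric}$ to the flatness of the linear connection that $\nabla$ induces on the line bundle $\Lambda^{n}T^{*}M$ of volume elements ($n=\dim M$), and then to read off that flatness as the existence of a $\nabla$-parallel volume form. The whole argument rests on one identity, available precisely because $\nabla$ is torsion free: the skew-symmetric part of $\mathrm{Ric}$ is exactly the trace of the curvature endomorphism. So I first establish that identity, then translate it into a statement about the volume-element bundle.

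For the algebraic core I would start from $R(X,Y)=[\nabla_{X},\nabla_{Y}]-\nabla_{[X,Y]}$ and $\mathrm{Ric}(Y,Z)=\operatorname{tr}\bigl(X\mapsto R(X,Y)Z\bigr)$, and invoke the first Bianchi identity $R(X,Y)Z+R(Y,Z)X+R(Z,X)Y=0$, which holds since $\nabla$ is torsion free. Rewriting it as $R(X,Y)Z=-R(Y,Z)X+R(X,Z)Y$ and taking the trace over $X$, while using $\operatorname{tr}\bigl(X\mapsto R(X,Z)Y\bigr)=\mathrm{Ric}(Z,Y)$ and $\operatorname{tr}\bigl(X\mapsto R(Y,Z)X\bigr)=\operatorname{tr}R(Y,Z)=:\widehat{R}(Y,Z)$, I obtain
\[
\mathrm{Ric}(Y,Z)-\mathrm{Ric}(Z,Y)=-\widehat{R}(Y,Z),
\]
where $\widehat{R}(Y,Z)$ is the trace of the endomorphism field $W\mapsto R(Y,Z)W$. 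Hence $\mathrm{Ric}$ is symmetric if and only if the $2$-form $\widehat{R}$ vanishes identically. (I would double-check the contraction in a local frame, tracking index positions carefully, since this is where a spurious sign most easily slips in.)

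Next I would identify $\widehat{R}$ with the curvature of the connection induced by $\nabla$ on $\Lambda^{n}T^{*}M$. Fixing a local nowhere-vanishing volume form $\omega_{0}$, the induced connection is encoded by a local $1$-form $\tau$ through $\nabla_{X}\omega_{0}=\tau(X)\,\omega_{0}$, with $\tau(X)$ equal, up to sign, to the trace of $Y\mapsto\nabla_{X}Y$. Since this is a line bundle, its curvature in the trivialization $\omega_{0}$ is $d\tau$, and a short computation identifies $d\tau$ with $\widehat{R}$ up to sign. Thus $\widehat{R}=0$ is equivalent to $d\tau=0$.

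Finally I would connect this with parallel volume elements. A volume form $\omega=f\,\omega_{0}$ with $f>0$ is $\nabla$-parallel exactly when $d\log f=-\tau$. If such $\omega$ exists then $\tau=-d\log f$ is exact, so $d\tau=0$, hence $\widehat{R}=0$ and $\mathrm{Ric}$ is symmetric; this direction is immediate. Conversely, symmetry of $\mathrm{Ric}$ gives $\widehat{R}=0$, so $\tau$ is closed and $d\log f=-\tau$ is solvable, producing a parallel volume element. I expect the main obstacle to lie in this last step: closedness of $\tau$ only yields a \emph{local} primitive, so a genuinely global parallel $\omega$ requires the de Rham class of $\tau$, equivalently the holonomy of the flat line bundle $\Lambda^{n}T^{*}M$, to vanish. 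I would therefore either phrase the conclusion locally, or appeal to the standing setting of the paper—where $M$ is (identified with) a Lie group, hence parallelizable with trivial density bundle—to upgrade the local primitive to a globally defined $\nabla$-parallel volume element.
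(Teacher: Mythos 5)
The paper gives no proof of this proposition (it is the classical characterization of local equiaffinity going back to affine differential geometry, cf.\ Nomizu--Sasaki \cite{NS}), so there is no internal argument to compare with; your route is the standard one and its core is correct. Tracing the first Bianchi identity for a torsion-free connection does yield
\[
\mathrm{Ric}(Y,Z)-\mathrm{Ric}(Z,Y)=-\operatorname{tr}R(Y,Z),
\]
the induced connection on the line bundle $\Lambda^{n}T^{*}M$ has curvature $d\tau$ in a local trivialization $\omega_{0}$ with $\nabla_{X}\omega_{0}=\tau(X)\,\omega_{0}$, and $\omega=f\omega_{0}$ is parallel exactly when $d\log f=-\tau$. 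So symmetry of $\mathrm{Ric}$ is equivalent to $d\tau=0$, hence to the \emph{local} existence of a parallel volume element. Note that the paper itself records precisely this closedness formulation later, in Section \ref{sec:1.7} (``$\mathrm{Ric}$ is symmetric if and only if $d\tau=0$'', quoted from Opozda), which corroborates your computation. You are also right that the only delicate point is passing from a local primitive of $\tau$ to a global parallel $\omega$.

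However, your proposed global upgrade fails. Triviality of $\Lambda^{n}T^{*}M$ (for instance via parallelizability of a Lie group) does not trivialize the \emph{holonomy} of the flat connection on it: the obstruction is the de Rham class $[\tau]\in H^{1}_{\mathrm{dR}}(M)$, which parallelizability does not kill. Concretely, on the Lie group $G=S^{1}=\mathbb{R}/2\pi\mathbb{Z}$ take the left invariant torsion-free connection $\nabla_{\partial_\theta}\partial_\theta=c\,\partial_\theta$ with constant $c\neq 0$. Then $R\equiv 0$, so $\mathrm{Ric}\equiv 0$ is symmetric; with $\omega_{0}=d\theta$ one gets $\tau=-c\,d\theta$, closed but not exact, and a parallel volume element would have to be $Ae^{c\theta}\,d\theta$, which is not well defined on $S^{1}$. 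So the global ``if and only if'' as literally stated needs either the local phrasing or an extra hypothesis such as simple connectivity (where closed implies exact and your primitive globalizes). Your fallback option --- stating the conclusion locally --- is the correct fix and agrees with the standard form of the result; the Lie-group/parallelizability argument should be dropped.
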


\begin{Remark}
{\rm Let $M$ be a manifold. A pair $(\omega,\nabla)$ consisting 
of a volume element $\omega$ and a linear connection $\nabla$ on $M$ 
is said to be an \emph{equiaffine strcture} if $\omega$ is parallel with respect to 
$\nabla$. The resulting triplet $(M,\omega,\nabla)$ is called an 
\emph{equiaffine manifold}.
}
\end{Remark}

\begin{Example}[Equiaffine immersions]
{\rm Let $\mathbb{A}^{n+1}=(\mathbb{R}^{n+1},D,\det)$ be 
the \emph{equiaffine} $(n+1)$-\emph{space}, 
that is, the Cartesian $(n+1)$-space $\mathbb{R}^{n+1}$ equipped 
with the natural equiaffine structure $(D,\det)$. Here $D$ is the natural  
flat linear connection and $\det$ is the standard volume 
element.   
Let $\iota:M\to\mathbb{A}^{n+1}$ be an affine immersion of an $n$-manifold $M$ into 
$\mathbb{A}^{n+1}$ with globally defined 
transversal vector field $\xi$. 
Under the assumption, the pair $(\iota,\xi)$ 
induces a torsion free linear connection 
$\nabla$ on $M$ via the \emph{Gauss formula}:
\[
D_{X}Y=\nabla_{X}Y+h(X,Y)\xi, \quad 
X,Y\in\mathfrak{X}(M).
\]
The connection $\nabla$ is called the 
\emph{induced connection}. The tensor field $h$ of type $(0,2)$ on $M$ is symmetric and 
called the \emph{affine fundamental form}.
Let us introduce a volume element $\omega$ on $M$ by
\[
\omega(X_1,X_2,\dots,X_n):=\det(X_1,X_2,\dots,X_n,\xi).
\]
When $(\nabla,\omega)$ is an equiaffine structure, then 
$(\iota,\xi)$ is called an \emph{equiaffine immersion}.
Hereafter we assume that $(\iota,\xi)$ is equiaffine.

The rank of $h$ is independent 
of the choice of $\xi$. 
Thus the notion of non-degeneracy of $h$ is well-defined. 
Now let us assume that $(\iota,\xi)$ is non-degenerate and 
$h$ is positive definite at a point, then 
it is positive definite on whole $M$. Hence 
$h$ is a Riemannian metric on $M$. 
In such a case $(\iota,\xi)$ is said to be 
\emph{locally strongly convex}. 
Moreover the Codazzi equation 
is given by 
\[
(\nabla_{X}h)(Y,Z)=(\nabla_{Y}h)(X,Z),
\quad 
X,Y,Z\in\mathfrak{X}(M).
\]
Thus $(M,h,\nabla)$ is a statistical manifold. 
For more information on 
affine immersion, see \cite{NS}. 

For a prescribed statistical $n$-manifold $(M,h,\nabla)$ 
if there exits an equiaffine immersion $\iota:M
\to\mathbb{A}^{n+1}$ such that $\nabla$ coincides with 
the induced connection and $h$ coincides with the affine fundamental form, 
then $(\iota,\xi)$ is called a \emph{realization} of $(M,h,\nabla)$ in 
$\mathbb{A}^{n+1}$.
}
\end{Example}

\begin{Example}[Locally strongly convex hypersurfaces]
{\rm Let $(\mathcal{M}^{n+1}(c),\tilde{g})$ be a space form of constant curvature $c$ with 
Levi-Civita connection $\nabla^{\tilde{g}}$. 
An orientable hypersurface $\iota:M\to\mathcal{M}^{n+1}(c)$ with unit normal $\nu$ 
inherits a Riemannian metric $g=\iota^{*}\tilde{g}$ and Levi-Civita connection $\nabla^{g}$ from 
$(\tilde{g},\nabla^{\tilde{g}})$ through the Gauss formula:
\[
\nabla^{\tilde{g}}_{X}Y=\nabla^{g}_{X}Y+h(X,Y)\nu, \quad 
X,Y\in\mathfrak{X}(M).
\]
Assume that the hypersurface has 
positive Gauss-Kronecker curvature, 
\textit{i.e.}, $h$ is positive definite, then 
$(M,h,\nabla^{g})$ is a statistical manifold.
}
\end{Example}

\subsection{Apolarity}\label{sec:1.5}
Assume that a statistical manifold $M=(M,g,\nabla)$ is orientable and 
denote by $dv_g$ the volume element 
determined by the metric $g$ and the fixed orientation. Then we have 
(see \cite[p.~361]{Op2015}):
\[
\nabla_{X}(dv_{g})=-\tau(X)\,dv_g,
\]
where the $1$-form $\tau$ is given by
\[
\tau(X)=-\frac{1}{2}\,\mathrm{tr}_{g}(\nabla_{X}g)=-\frac{1}{2}\mathrm{tr}_{g}(K_X)
\]
and $\mathrm{tr}_g$ is the metrical trace with respect to $g$. 
Note that the vector field metrically dual to $\tau$  with respect to $g$ is 
given by
\[
E=-\frac{1}{2}\,\mathrm{tr}_{g}K.
\]

\begin{Proposition}
On an oriented statistical manifold $(M,g,\nabla)$, 
the structure $(dv_g,\nabla)$ is equiaffine if and only if 
$\tau=0$.
\end{Proposition}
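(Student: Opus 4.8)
The plan is to derive the equivalence directly from the formula
\[
\nabla_{X}(dv_{g})=-\tau(X)\,dv_g
\]
that was established immediately before the statement, so essentially all the analytic content has already been packaged into that identity. First I would unwind the definition of equiaffine structure from the earlier Remark: the pair $(dv_g,\nabla)$ is equiaffine precisely when the volume element $dv_g$ is parallel with respect to $\nabla$, that is, when $\nabla_{X}(dv_g)=0$ for every $X\in\mathfrak{X}(M)$. This is the only conceptual step; it converts the geometric hypothesis into a pointwise vanishing condition on the covariant derivative.

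Next I would substitute the displayed formula. Since $\nabla_{X}(dv_g)=-\tau(X)\,dv_g$, the parallelism condition $\nabla_{X}(dv_g)=0$ for all $X$ becomes $\tau(X)\,dv_g=0$ for all $X$. The forward direction ($\tau=0\Rightarrow$ equiaffine) is then immediate: if $\tau$ vanishes identically then $\nabla_{X}(dv_g)=0$ for every $X$, so $dv_g$ is $\nabla$-parallel. For the converse I would invoke the fact that $dv_g$ is a genuine volume element, hence a nowhere-vanishing top-degree form on the oriented manifold $M$. Thus $\tau(X)\,dv_g=0$ forces $\tau(X)=0$ at every point, and since $X$ is arbitrary we obtain $\tau\equiv 0$.

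In short, the two implications are obtained by reading the identity $\nabla_{X}(dv_g)=-\tau(X)\,dv_g$ in both directions, using only that $dv_g$ never vanishes. I do not anticipate a genuine obstacle here: the substantive work lies in the already-granted computation of $\nabla_{X}(dv_g)$, and the one point requiring a word of justification is the nondegeneracy of $dv_g$, which is automatic for the Riemannian volume form on an orientable manifold. Consequently the argument is essentially a restatement of the preceding formula once the definition of equiaffine is recalled.
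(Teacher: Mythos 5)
Your proof is correct and is exactly the argument the paper intends: the proposition is stated without further proof precisely because it is an immediate consequence of the displayed identity $\nabla_{X}(dv_g)=-\tau(X)\,dv_g$ together with the definition of an equiaffine structure. Your one added observation — that the converse uses the nowhere-vanishing of the volume form $dv_g$ — is the right point to make explicit, and nothing is missing.
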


\subsection{The $\alpha$-connections}\label{sec:1.6}
Motivated by Amari-Chentsov $\alpha$-connections 
on statistical models, 
the notion of $\alpha$-connection 
on an arbitrary statistical manifold $(M,g,\nabla)$ 
is defined in the following mannar:
\begin{equation}
\nabla^{(\alpha)}_XY =\nabla^{g}_XY-\frac
{\alpha}{2}K(X)Y
\end{equation}
The linear connection $\nabla^{(\alpha)}$ is called the
$\alpha$-\emph{connection} \cite{AN}. 

Note that $\nabla^{(1)}=\nabla$
and $\nabla^{(-1)}=\nabla^*$. 
The covariant derivative of $g$ relative to $\nabla^{(\alpha)}$  is
\[
(\nabla^{(\alpha)}_X g)(Y,Z)=\alpha\, C(X,Y,Z).
\]
Thus $(M, g,\nabla^{(\alpha)})$ is statistical for all
$\alpha \in \mathbb{R}$.

The curvature tensor field 
$R^{\alpha}$ of $\nabla^{(\alpha)}$ is 
given by
\[
R^{\alpha}(X,Y)Z=
R^g(X,Y)Z+\frac{\alpha^2}{4}[K(X),K(Y)]Z
-\frac{\alpha}{2}\left(
(\nabla^{g}_{X}K)(Y,Z)-(\nabla^{g}_{Y}K)(X,Z)
\right).
\]

\subsection{The scalar curvature}\label{sec:1.7}
Denote by $\mathrm{Ric}^{*}$ the 
Ricci tensor field of $\nabla^{*}$, then 
we have 
$\mathrm{tr}_g\mathrm{Ric}=\mathrm{tr}_{g}\mathrm{Ric}^{*}$. 
Based on this fact, the \emph{scalar curvature} $\rho$ of $(M,g,\nabla)$ 
is defined as $\rho=\mathrm{tr}_g\mathrm{Ric}$.

On an oriented statistical manifold $(M,g,\nabla)$ equipped with 
the volume element $dv_g$, we know the following result \cite[Proposition 3.2]{Op2015}.

\begin{Proposition}
On an oriented statistical manifold $(M,g,\nabla)$, 
the Ricci tensor field $\mathrm{Ric}$ of $\nabla$ is symmetric 
if and only if $d\tau=0$. 
\end{Proposition}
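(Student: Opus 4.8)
The plan is to prove the sharper statement that the skew-symmetric part of $\mathrm{Ric}$ is, up to sign, exactly $d\tau$; the asserted equivalence is then immediate.

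First I would compute how the curvature operator $R(Y,Z)$, regarded as a derivation on tensor fields, acts on the volume element $dv_g$. Starting from the defining relation $\nabla_X(dv_g)=-\tau(X)\,dv_g$ and iterating, a direct calculation gives
\[
R(Y,Z)\cdot dv_g=\bigl(\nabla_Y\nabla_Z-\nabla_Z\nabla_Y-\nabla_{[Y,Z]}\bigr)dv_g=-\,d\tau(Y,Z)\,dv_g,
\]
where $d\tau(Y,Z)=Y\tau(Z)-Z\tau(Y)-\tau([Y,Z])$. On the other hand, since $R(Y,Z)$ acts on the top exterior power $\Lambda^{n}T^{*}M$ as multiplication by $-\mathrm{tr}\,R(Y,Z)$, where $\mathrm{tr}\,R(Y,Z)$ is the trace of the endomorphism $R(Y,Z)$ of $TM$, comparing the two expressions yields the identity
\[
\mathrm{tr}\,R(Y,Z)=d\tau(Y,Z).
\]

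Next I would extract the skew part of the Ricci tensor. Using a local frame $\{e_i\}$ with dual coframe $\{e^i\}$ and the first Bianchi identity $R(X,Y)Z+R(Y,Z)X+R(Z,X)Y=0$ (valid because $\nabla$ is torsion free), substitution gives $R(e_i,Y)Z-R(e_i,Z)Y=-R(Y,Z)e_i$; tracing over $i$ produces
\[
\mathrm{Ric}(Y,Z)-\mathrm{Ric}(Z,Y)=-\,\mathrm{tr}\,R(Y,Z).
\]
Combining with the previous identity gives $\mathrm{Ric}(Y,Z)-\mathrm{Ric}(Z,Y)=-\,d\tau(Y,Z)$, so the Ricci tensor of $\nabla$ is symmetric precisely when $d\tau=0$.

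I do not expect a genuine conceptual obstacle here: every step is a short local computation. The only real care is in bookkeeping the sign conventions — in particular, the dual action of $R(Y,Z)$ on $\Lambda^{n}T^{*}M$ contributing the factor $-\mathrm{tr}\,R(Y,Z)$, and matching the Ricci convention $\mathrm{Ric}(Y,Z)=\mathrm{tr}\,(X\mapsto R(X,Y)Z)$ used throughout. As an alternative, one could instead invoke the earlier Proposition characterizing symmetric Ricci by the existence of a $\nabla$-parallel volume element: writing such a form as $f\,dv_g$ shows it is parallel iff $\tau=d\log f$ is locally exact, which (by the Poincar\'e lemma, and since the symmetry of $\mathrm{Ric}$ is a local condition) is equivalent to $d\tau=0$.
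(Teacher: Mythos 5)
Your main argument is correct, and it is in fact the standard proof behind the result: the paper itself gives no proof here but simply cites \cite[Proposition 3.2]{Op2015}, and the computation you give (the curvature derivation acting on $dv_g$ yields $R(Y,Z)\cdot dv_g=-d\tau(Y,Z)\,dv_g$, the action on $\Lambda^nT^{*}M$ is multiplication by $-\mathrm{tr}\,R(Y,Z)$, and the traced first Bianchi identity converts $\mathrm{tr}\,R(Y,Z)$ into the skew part of $\mathrm{Ric}$) is exactly the mechanism underlying the cited statement. All three steps check out, including the sign bookkeeping, and torsion-freeness is used precisely where needed (Bianchi, and the intrinsic formula for $d\tau$); so you have supplied a complete, self-contained proof of a statement the paper leaves to the literature, and indeed of the sharper identity $\mathrm{Ric}(Y,Z)-\mathrm{Ric}(Z,Y)=-\,d\tau(Y,Z)$.

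One small caution on your alternative route: the paper's earlier Proposition in Section \ref{sec:1.4} is phrased with a \emph{global} parallel volume element, which corresponds to $\tau$ being exact, not merely closed; on a manifold with nontrivial first de Rham cohomology these differ. You handle this correctly by localizing (symmetry of $\mathrm{Ric}$ is pointwise, and the Poincar\'e lemma gives local primitives of a closed $\tau$), but the equivalence with the Proposition as literally stated requires reading ``volume element'' locally, so the direct computation you lead with is the cleaner and safer argument.
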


\subsection{The conjugate symmetry}\label{sec:1.8}
Take a tangent plane $\Pi=X\wedge Y$ 
at a point $p$ of a statistical manifold, 
the quantity
\[
\frac{g(R(X,Y)Y,X)}{g(X,X)g(Y,Y)-g(X,Y)^2}
\]
is not well-defined object, since $\nabla g\not=0$, in general. 
Here we recall the following notion \cite{Lau}: 

\begin{Definition}{\rm 
A statistical manifold $(M,g,\nabla)$ is said to be 
\emph{conjugate symmetric} provided that $R=R^{*}$ \cite{Lau}.
}
\end{Definition}
One can see that if $M$ is conjugate symmetric then 
$\mathrm{Ric}=\mathrm{Ric}^{*}$ and 
$\mathrm{Ric}$ is symmetric 
\cite[Proposition 3.2]{Op2015}.
Moreover, on a conjugate symmetric 
statistical manifold $M$, the 
\emph{sectional curvature} $\mathsf{K}$ of a tangent plane 
$\Pi\subset T_{p}M$ can be defined by 

\[
\mathsf{K}(\Pi)=\mathsf{K}(X\wedge Y)=\frac{g(R(X,Y)Y,X)}{g(X,X)g(Y,Y)-g(X,Y)^2}.
\]
The difference $R^{*}-R$ is 
computed as
\[
R^{*}(X,Y)Z-R(X,Y)Z=(\nabla^{g}_{X}K)(Y,Z)-(\nabla^{g}_{Y}K)(X,Z).
\]
Thus $M$ is conjugate symmetric if and only if 
$\nabla^{g}K$ is totally symmetric. 
The conjugate symmetry is characterized as follows:

\begin{Lemma}[\cite{BNS,Op2015}]\label{lem:symmetric}
The following statements are mutually equivalent for every 
statistical manifold $(M,g,\nabla)$.
\begin{enumerate}
\item $M$ is conjugate symmetric.
\item $\nabla^{g} C$ is totally symmetric.
\item $\nabla C$ is totally symmetric.
\item $\nabla^{g} K$ is totally symmetric.
\item For any fixed $X$ and $Y$, $(Z,W)\longmapsto g(R(X,Y)Z,W)$ is a $2$-form. 
\end{enumerate}
\end{Lemma}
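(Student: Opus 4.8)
The plan is to treat conjugate symmetry, condition $(1)$, as the hub and to establish the chain $(1)\Leftrightarrow(4)\Leftrightarrow(2)\Leftrightarrow(3)$ together with the separate equivalence $(1)\Leftrightarrow(5)$. All the analytic input I need is already recorded above: the curvature difference formula
\[
R^{*}(X,Y)Z-R(X,Y)Z=(\nabla^{g}_{X}K)(Y,Z)-(\nabla^{g}_{Y}K)(X,Z),
\]
the adjointness relation $g(R(X,Y)Z,W)+g(Z,R^{*}(X,Y)W)=0$, the identity $\nabla-\nabla^{g}=-\tfrac12 K$, and the algebraic symmetries of $K$, namely $K(X)Y=K(Y)X$ and the self-adjointness $g(K(X)Y,Z)=g(Y,K(X)Z)$.

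First I would dispatch $(1)\Leftrightarrow(4)$, which is essentially immediate from the curvature difference formula: $R=R^{*}$ holds exactly when $(\nabla^{g}_{X}K)(Y,Z)=(\nabla^{g}_{Y}K)(X,Z)$ for all $X,Y,Z$, that is, when $\nabla^{g}K$ is symmetric in its first two arguments. Since $K$ is already symmetric in its two lower arguments, $\nabla^{g}K$ is automatically symmetric in the last two, so symmetry in the first two is the same as total symmetry of $\nabla^{g}K$, which is $(4)$. The equivalence $(2)\Leftrightarrow(4)$ is then a consequence of metricity: because $\nabla^{g}g=0$ and $C(X,Y,Z)=g(K(X)Y,Z)$, differentiating gives $(\nabla^{g}_{W}C)(X,Y,Z)=g((\nabla^{g}_{W}K)(X,Y),Z)$, and nondegeneracy of $g$ lets me transfer total symmetry of the $(0,4)$-tensor $\nabla^{g}C$ to total symmetry of $\nabla^{g}K$, and conversely.

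The heart of the argument is $(2)\Leftrightarrow(3)$, and this is where I expect the only real computation. Using $\nabla=\nabla^{g}-\tfrac12 K$ and the action of the difference tensor on the $(0,3)$-tensor $C$, I would write
\[
(\nabla_{W}C)(X,Y,Z)=(\nabla^{g}_{W}C)(X,Y,Z)+\tfrac12\bigl[C(K(W)X,Y,Z)+C(X,K(W)Y,Z)+C(X,Y,K(W)Z)\bigr].
\]
The key observation is that the correction term is \emph{always} totally symmetric in $(W,X,Y,Z)$, independently of conjugate symmetry: using $K(A)B=K(B)A$ together with self-adjointness, each summand becomes $g(K(\cdot)\cdot,K(\cdot)\cdot)$ after relabelling, so the bracket equals
\[
g(K(W)X,K(Y)Z)+g(K(W)Y,K(X)Z)+g(K(W)Z,K(X)Y),
\]
which is the sum over the three ways of splitting $\{W,X,Y,Z\}$ into two unordered pairs of the expression $g(K(\cdot)\cdot,K(\cdot)\cdot)$; this expression depends only on the unordered pairs, so the sum is manifestly invariant under every permutation of the four arguments. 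Granting this, $\nabla C$ and $\nabla^{g}C$ differ by a totally symmetric tensor, hence one is totally symmetric iff the other is, which is $(2)\Leftrightarrow(3)$. Finally $(1)\Leftrightarrow(5)$ follows directly from the adjointness relation, which gives $g(R(X,Y)Z,W)=-g(R^{*}(X,Y)W,Z)$: for fixed $X,Y$ the assignment $(Z,W)\mapsto g(R(X,Y)Z,W)$ is skew in $Z,W$ precisely when $g(R(X,Y)W,Z)=g(R^{*}(X,Y)W,Z)$ for all $Z,W$, i.e. precisely when $R=R^{*}$. I expect verifying the total symmetry of the correction term --- really just bookkeeping with the two symmetries of $K$ --- to be the main and only mildly delicate obstacle; everything else is an unwinding of the formulas already at hand.
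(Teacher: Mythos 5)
Your proof is correct, and it follows the same route the paper itself indicates: the paper states this lemma with only a citation to \cite{BNS,Op2015}, but its in-text derivation of conjugate symmetry from the difference formula $R^{*}(X,Y)Z-R(X,Y)Z=(\nabla^{g}_{X}K)(Y,Z)-(\nabla^{g}_{Y}K)(X,Z)$ in Section \ref{sec:1.8} is exactly your $(1)\Leftrightarrow(4)$, and your remaining steps --- transferring $(4)$ to $(2)$ by metricity and nondegeneracy of $g$, reading $(1)\Leftrightarrow(5)$ off the adjointness relation, and showing $\nabla C-\nabla^{g}C$ equals the pair-partition sum $g(K(W)X,K(Y)Z)+g(K(W)Y,K(X)Z)+g(K(W)Z,K(X)Y)$, which is manifestly $S_4$-invariant --- are the standard arguments of the cited sources, carried out correctly with the identities the paper records.
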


\begin{Remark}[Statistical sectional curvature]{\rm 
On a statistical manifold $M$, the 
\emph{statistical curvature tensor field}
$R^{\mathsf{S}}$ is introduced 
by Furuhata and Hasegawa \cite{FH}:
\[
R^{\mathsf S}(X,Y)Z:=
\frac{1}{2}(R(X,Y)Z+R^{*}(X,Y)Z),
\quad X,Y,Z\in\mathfrak{X}(M).
\]
See a comment by \cite[p.~91]{Mirja} on 
Kurose's work.

The statistical curvature tensor 
field 
has the form:
\[
R^{\mathsf S}(X,Y)Z=
R^{g}(X,Y)Z
+\frac{1}{4}[K(X),K(Y)]Z.
\]
Here $R^{g}$ is the Riemannian curvature 
of $g$ and $K$ is the skewness operator.

The \emph{statistical sectional curvature} 
$\mathsf{K}^{\mathsf S}(\Pi)$ of a 
tangent plane $\Pi\subset T_{p}M$ is defined by (see Opozda \cite{Op})
\[
\mathsf{K}^{\mathsf S}(\Pi)=\frac{g(R^{\mathsf S}(X,Y)Y,X)}{g(X,X)g(Y,Y)-g(X,Y)^2},
\]
where $\{X,Y\}$ is a basis of $\Pi$. When 
$M$ is conjugate-symmetric, 
$R=R^{*}=R^{\mathsf S}$ and hence 
the statistical sectional curvature coincides with 
sectional curvature. 

A statistical manifold 
$M$ is said to be a 
space of constant 
statistical 
sectional curvature $c$ if 
\[
R^{\mathsf S}(X,Y)Z=c\{g(Y,Z)X-g(Z,X)Y\}.
\]
Note that 
Douhira \cite{Doh} introduced $\mathsf{K}^{\mathsf S}$ independently and 
called the 
\emph{Codazzi bisectional curvature}. It should be 
pointed out, in K{\"a}hler geometry, 
the notion of (holomorphic) bisectional curvature was introduced 
in the following mannar (see \cite{GoKob,SiuYau}):

Let $(M,g,J)$ be a K{\"a}hler manifold.
Then at a point $p\in M$, take two holomorphic planes 
$\Pi_1=X\wedge JHX$ and $\Pi_2=Y\wedge JY$. Here we choose 
$X$ and $Y$ as unit vectors. Then the 
bisectional curvature $H(\Pi_1,\Pi_2)$ is defined by
\[
H(\Pi_1,\Pi_2)=R(X,JX,JY,Y).
\]
Obviously, when $\Pi_1=\Pi_2$, 
$H(\Pi_1,\Pi_1)$ is the holomorphic 
sectional curvature $K^{g}(X\wedge JX)$.
}
\end{Remark}
\subsection{Statistical manifolds of constant curvature}\label{sec:1.9}
A statistical manifold $M$ is said to be of 
\emph{constant curvature} $k$ if it satisfies
\[
R(X,Y)Z=k(X\wedge_{g}Y)Z=k\{g(Y,Z)X-g(Z,X)Y\}
\]
for some constant $k$.


\begin{Theorem}[\cite{KO}]
For a statistical manifold 
$(M,g,\nabla)$, the following three properties are mutually equivalent
{\rm:}
\begin{enumerate}
\item It has a constant curvature.
\item It is a projectively flat and conjugate symmetric.
\item It is a projectively flat and 
has self-conjugate Ricci operator.
\end{enumerate}
\end{Theorem}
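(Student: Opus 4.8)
The plan is to prove the three equivalences by establishing the cycle $(1)\Rightarrow(2)\Rightarrow(3)\Rightarrow(1)$, using throughout the adjoint relation $g(R(X,Y)Z,W)+g(Z,R^{*}(X,Y)W)=0$ recorded in Section~\ref{sec:1.3}. For $(1)\Rightarrow(2)$ I would start from $R(X,Y)Z=k\{g(Y,Z)X-g(X,Z)Y\}$ and observe that $g(R(X,Y)Z,W)=k\{g(Y,Z)g(X,W)-g(X,Z)g(Y,W)\}$ is skew-symmetric in $Z,W$; hence each $R(X,Y)$ is $g$-skew-adjoint, and the adjoint relation forces $R^{*}=R$, i.e.\ conjugate symmetry. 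A one-line trace then gives $\mathrm{Ric}=k(n-1)g$, so the projective Weyl tensor $R(X,Y)Z-\frac{1}{n-1}\{\mathrm{Ric}(Y,Z)X-\mathrm{Ric}(X,Z)Y\}$ vanishes identically; together with the fact that $\mathrm{Ric}=k(n-1)g$ is $\nabla$-Codazzi (because $\nabla g=C$ is totally symmetric and $k$ is constant), this yields projective flatness in every dimension.

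The implication $(2)\Rightarrow(3)$ is immediate: by Lemma~\ref{lem:symmetric} conjugate symmetry means $R=R^{*}$, whence the Ricci operators of $\nabla$ and $\nabla^{*}$ coincide, $\mathrm{Ric}=\mathrm{Ric}^{*}$, which is exactly the self-conjugacy of the Ricci operator; projective flatness is shared by both hypotheses, so nothing further is needed.

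The substance is $(3)\Rightarrow(1)$, where projective flatness lets me reduce $R$ to Ricci data. Writing $P:=\tfrac{1}{n-1}\mathrm{Ric}$ (symmetric, since $\mathrm{Ric}=\mathrm{Ric}^{*}$), the vanishing of the projective Weyl tensor gives $R(X,Y)Z=P(Y,Z)X-P(X,Z)Y$, and the second Bianchi identity forces $P$ to be $\nabla$-Codazzi. Feeding this expression into the adjoint relation produces the dual form $R^{*}(X,Y)Z=g(Y,Z)\widehat{P}X-g(X,Z)\widehat{P}Y$, where $\widehat{P}$ is the $g$-self-adjoint endomorphism with $g(\widehat{P}X,Y)=P(X,Y)$. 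Taking traces of the two expressions yields $\mathrm{Ric}=(n-1)P$ and $\mathrm{Ric}^{*}=(\mathrm{tr}_{g}\widehat{P})\,g-P$; imposing the self-conjugacy $\mathrm{Ric}=\mathrm{Ric}^{*}$ then gives $nP=(\mathrm{tr}_{g}\widehat{P})\,g$, i.e.\ $P=\lambda g$ with $\lambda=\tfrac1n\,\mathrm{tr}_{g}\widehat{P}$. Substituting back, $R(X,Y)Z=\lambda\{g(Y,Z)X-g(X,Z)Y\}$, which is constant curvature once $\lambda$ is shown to be constant.

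I expect the constancy of $\lambda$ to be the main obstacle, to be handled by a Schur-type argument: the $\nabla$-Codazzi property of $P=\lambda g$ reads $(X\lambda)g(Y,Z)+\lambda\,C(X,Y,Z)=(Y\lambda)g(X,Z)+\lambda\,C(Y,X,Z)$, and since $C$ is totally symmetric the $C$-terms cancel, leaving $(X\lambda)g(Y,Z)=(Y\lambda)g(X,Z)$; choosing $Z=X$ and $Y\perp X$ forces $d\lambda=0$ for $n\ge2$. The remaining delicate point is dimension two, where projective flatness is not detected by the Weyl tensor; there I would take the defining Cotton--Codazzi condition on the normalized Ricci tensor as the meaning of projective flatness, so that the $\nabla$-Codazzi property of $P$ used above is available directly rather than through the second Bianchi identity.
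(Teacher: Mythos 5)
The paper itself offers no proof of this theorem---it is quoted from \cite{KO}---so there is no internal argument to compare against; judged on its own, your cycle $(1)\Rightarrow(2)\Rightarrow(3)\Rightarrow(1)$ is correct and is essentially the duality-plus-projective-Weyl argument of the source. The individual steps check out: skewness of $g(R(X,Y)Z,W)$ in $(Z,W)$ together with the adjoint relation $g(R(X,Y)Z,W)+g(Z,R^{*}(X,Y)W)=0$ does force $R=R^{*}$ (this is item (5) of Lemma \ref{lem:symmetric}); the trace computations $\mathrm{Ric}=(n-1)P$ and $\mathrm{Ric}^{*}=(\mathrm{tr}_{g}\widehat{P})g-P$ are right; and the Schur step is sound, since the $C$-terms cancel by total symmetry and $n\geq 2$ suffices to conclude $d\lambda=0$.

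Two points should be made explicit rather than asserted. First, your parenthetical claim that $\mathrm{Ric}=\mathrm{Ric}^{*}$ makes $P$ symmetric is true but needs the following one-line argument: by the first Bianchi identity, $\mathrm{Ric}(Y,Z)-\mathrm{Ric}(Z,Y)=-\mathrm{tr}\,R(Y,Z)$, while $R^{*}(Y,Z)$ is minus the $g$-adjoint of $R(Y,Z)$, so $\mathrm{tr}\,R^{*}(Y,Z)=-\mathrm{tr}\,R(Y,Z)$; hence $\mathrm{Ric}=\mathrm{Ric}^{*}$ forces $\mathrm{tr}\,R(Y,Z)=0$ and both Ricci tensors are symmetric. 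You genuinely need this \emph{before} writing $R(X,Y)Z=P(Y,Z)X-P(X,Z)Y$ with $P=\mathrm{Ric}/(n-1)$, because for a nonsymmetric Ricci tensor the projective Weyl tensor carries the extra term $-\{P(X,Y)-P(Y,X)\}Z$. Second, you silently chose the reading of ``self-conjugate Ricci operator'' as $\mathrm{Ric}=\mathrm{Ric}^{*}$ rather than as mere $g$-self-adjointness (symmetry) of the Ricci operator; this is the correct choice, and it matters: with the weaker reading the implication $(3)\Rightarrow(1)$ is false. Indeed, on $\mathbb{R}^{n}$ take $g=e^{2\phi}\delta$ and $\nabla_{X}Y=D_{X}Y+2X(\phi)Y+2Y(\phi)X$ with $D$ the flat connection; then $(\nabla_{X}g)(Y,Z)=-2\{X(\phi)g(Y,Z)+Y(\phi)g(X,Z)+Z(\phi)g(X,Y)\}$ is totally symmetric, so this is a statistical manifold, and $\nabla$ is projectively flat with symmetric Ricci, yet for $\phi=x_{1}$ one gets $P=4\,dx_{1}\otimes dx_{1}$, which is not proportional to $g$, so the curvature is not constant. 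With the symmetry argument supplied, your proof is complete, including your $n=2$ convention of taking the Codazzi condition on the normalized Ricci tensor as the definition of projective flatness.
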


We refer to the readers 
Amari and Nagaoka's textbook \cite{AN}
for general theory of statistical manifolds.   

\section{Homogeneous statistical manifolds}\label{sec:2}
\subsection{Ambrose-Singer connections}\label{sec:2.1}
A Riemannian manifold $(M,g)$ is said to be a 
\emph{homogeneous Riemannian space} if there exits a Lie group $G$ of 
isometries which acts transitively on $M$.

More generally, $M$ is said to be \emph{locally homogeneous Riemannian 
space} if for each $p$, $q\in M$, 
there exists a local isometry which sends $p$ to 
$q$.

Without loss of generality, we may assume that 
a homogeneous Riemannian space $(M,g)$ is reductive.

Ambrose and Singer \cite{AS} 
gave an \emph{infinitesimal characterization} of 
local homogeneity of Riemannian manifolds. 
To explain their characterization we recall the following notion:

\begin{Definition}{\rm
A \emph{homogeneous Riemannian structure} $S$ on $(M,g)$ is
a tensor field of type $(1,2)$ which satisfies
\begin{equation}
\tilde{\nabla}{g}=0,
\quad 
\tilde{\nabla}{R}=0,
\quad 
\tilde{\nabla}{S}=0.
\end{equation}
Here $\tilde{\nabla}$ is a linear connection on $M$ defined
by $\tilde{\nabla}=\nabla^{g}+S$. The linear connection $\tilde{\nabla}$ is called 
the \emph{Ambrose-Singer connection}.
}
\end{Definition}

Let $(M,g)=G/H$ be a homogeneous Riemannian space.
Here $G$ is a connected Lie group acting transitively
on $M$ as a group of isometries. 
Without loss of generality we can assume that $G$ acts 
\emph{effectively} on $M$. 

The closed subgroup $H$ is the isotropy subgroup
of $G$ at a point $o\in M$ which will be called the \emph{origin} of $M$. 
For any $a\in G$, 
the translation $\tau_{a}$ by $a$ is 
a diffeomorphism on $M$ defined by $(ab)H$. 
Denote by $\mathfrak{g}$ and $\mathfrak{h}$ the Lie algebras of $G$ and
$H$, respectively. Then there exists a linear subspace 
$\mathfrak{m}$ of $\mathfrak{g}$ which is $\mathrm{Ad}(H)$-invariant.
If $H$ is connected, then $\mathrm{Ad}(H)$-invariant property 
of $\mathfrak{m}$ is equivalent to
the condition $[\mathfrak{h},\mathfrak{m}]\subset
\mathfrak{m}$, \textit{i.e.}, $G/H$ is reductive.
The tangent space 
$\mathrm{T}_{p}M$ of $M$ at a point 
$p=a\cdot o$ is identified with $\mathfrak{m}$ via the isomorphism
\[
\mathfrak{m}\ni
X
\longleftrightarrow 
X^{*}_{p}=\frac{\mathrm{d}}{\mathrm{d}t}\biggr \vert_{t=0}
\tau_{\exp(\mathrm{Ad}(a)X)}(p)
\in\mathrm{T}_{p}M.
\]
Then the canonical connection 
$\tilde{\nabla}=\nabla^{\mathrm c}$ is given by
\[
(\tilde{\nabla}_{X^*}Y^{*})_o
=-([X,Y]_{\mathfrak m})^{*}_o,
\quad 
X,Y 
\in \mathfrak{m}.
\]
For any vector $X\in\mathfrak{g}$, 
the $\mathfrak{m}$-component of $X$ is 
denoted by $X_{\mathfrak{m}}$.  
One can see the difference tensor field $S=\tilde{\nabla}-\nabla$
is a homogeneous Riemannian structure.
Thus every homogeneous Riemannian space admits homogeneous Riemannian structures.

Conversely, let $(M,S)$ be a simply connected Riemannian manifold with a
homogeneous Riemannian structure. Fix a point $o\in M$ and put 
$\mathfrak{m}=T_{o}M$. Denote by $\tilde{R}$ the curvature of the 
Ambrose-Singer connection $\tilde{\nabla}$. Then the holonomy algebra
$\mathfrak{h}$ of $\tilde{\nabla}$ 
the Lie subalgebra of the Lie algebra 
$\mathfrak{so}(\mathfrak{m},g_o)$ 
generated by the curvature 
operators $\tilde{R}(X,Y)$ with 
$X$, $Y\in\mathfrak{m}$.

Now we define a Lie algebra structure on the direct sum 
$\mathfrak{g}=\mathfrak{h}\oplus \mathfrak{m}$
by
\begin{align*}
[U,V]=& UV-VU,\\
[U,X]=& U(X),\\
[X,Y]=& -\tilde{R}(X,Y)-S(X)Y+S(Y)X
\end{align*}
for all $X$, $Y\in \mathfrak{m}$ and $U$, $V\in \mathfrak{h}$.

Now let $\tilde{G}$ be the simply connected Lie group with
Lie algebra $\mathfrak{g}$. Then $M$ is a coset manifold $\tilde{G}/\tilde{H}$, where
$\tilde{H}$ is a Lie subgroup of $G$ with Lie algebra $\mathfrak{h}$.
Let $\Gamma$ be the set of all elements in $G$ which act trivially on $M$. Then
$\Gamma$ is a discrete normal subgroup of $\tilde{G}$ and $G=\tilde{G}/\Gamma$
acts transitively and effectively on $M$ as an isometry group.
The isotropy subgroup $H$ of $G$ at $o$ is $H=\tilde{H}/\Gamma$.  
Hence $(M,g)$ is a homogeneous Riemannian space with coset
space representation $M=G/H$.

\begin{Theorem}[\cite{AS}]
A Riemannian manifold $(M,g)$ with a homogeneous Riemannian structure $S$
is locally homogeneous. 
\end{Theorem}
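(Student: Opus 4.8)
The plan is to promote the infinitesimal data already assembled above—the holonomy algebra $\mathfrak{h}$ of $\tilde\nabla$, the complement $\mathfrak{m}=T_oM$, and the three brackets on $\mathfrak{g}=\mathfrak{h}\oplus\mathfrak{m}$—into a genuine transitive family of local isometries. Since local homogeneity is a local statement, I would first restrict to a simply connected neighborhood of $o$, so that the global model $M\cong\tilde G/\tilde H$ indicated above is available, and then verify the two facts that were asserted there without proof: that the brackets define a Lie algebra, and that the resulting group acts transitively by isometries.

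The crux is the Jacobi identity for $\mathfrak{g}$. The mixed brackets $[U,V]=UV-VU$ and $[U,X]=U(X)$ merely record that $\mathfrak{h}\subset\mathfrak{gl}(\mathfrak{m})$ and that $\mathfrak{m}$ is an $\mathfrak{h}$-module, so the genuine work is the cyclic sum on three elements of $\mathfrak{m}$. Here I would extract two structural consequences of the hypotheses. Because $\tilde\nabla S=0$, the torsion $T(X,Y)=S(X)Y-S(Y)X$ of $\tilde\nabla$ is itself $\tilde\nabla$-parallel; and because $\tilde\nabla g=0$, $\tilde\nabla R^g=0$ and $\tilde\nabla S=0$ together force the curvature $\tilde R$ of $\tilde\nabla$ to be $\tilde\nabla$-parallel as well (one expresses $\tilde R$ through $R^g$, $S$ and $\nabla^g S=-S\cdot S$, each of which is $\tilde\nabla$-parallel). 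With $\tilde\nabla T=0$ and $\tilde\nabla\tilde R=0$ in hand, the first and second Bianchi identities for the (torsionful) connection $\tilde\nabla$ collapse to the purely algebraic relations $\mathfrak{S}_{X,Y,Z}\,\tilde R(X,Y)Z=\mathfrak{S}_{X,Y,Z}\,T(T(X,Y),Z)$ and $\mathfrak{S}_{X,Y,Z}\,\tilde R(T(X,Y),Z)=0$, and these are exactly what is needed to annihilate the Jacobi sum. I expect this Bianchi bookkeeping to be the main obstacle: one must track the torsion terms carefully (since $\tilde\nabla$ is not torsion free) and keep the $\mathfrak{h}$- and $\mathfrak{m}$-components separate, using that $\tilde R(X,Y)$ lands in $\mathfrak{h}$ so that the $\mathfrak{m}$-part of $[X,Y]$ is precisely $-S(X)Y+S(Y)X$.

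Once $\mathfrak{g}$ is a Lie algebra, I would take $\tilde G$ simply connected with this Lie algebra and $\tilde H$ the connected subgroup integrating $\mathfrak{h}$, and identify a neighborhood of $o$ in $M$ with $\tilde G/\tilde H$ by matching the reductive splitting $\mathfrak{g}=\mathfrak{h}\oplus\mathfrak{m}$ with $T_oM$. The canonical connection of this reductive space has torsion and curvature at $o$ equal to $T$ and $\tilde R$ by construction, and since both it and $\tilde\nabla$ have parallel torsion and curvature and share the same value at $o$, they coincide; consequently $\nabla^g=\tilde\nabla-S$ is recovered as the Levi-Civita connection of the invariant metric.

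It remains to check transitivity and the isometric property. Transitivity is immediate: since $\mathfrak{m}=T_oM$, the $\tilde G$-orbit of $o$ is open and therefore fills a connected neighborhood of $o$. For the isometric property I would use $\tilde\nabla g=0$, which makes each $\tilde R(X,Y)$ skew-symmetric with respect to $g_o$; hence the generators of $\mathfrak{h}$ lie in $\mathfrak{so}(\mathfrak{m},g_o)$, so $\mathrm{Ad}(\tilde H)$ preserves $g_o$ and the invariant metric determined by $g_o$ is well defined and equals $g$. Thus the $\tilde G$-action near $o$ is by isometries, giving a transitive family of local isometries, i.e. local homogeneity; globally one then transports this conclusion to an arbitrary pair of points through a chain of such neighborhoods. (An equivalent route, bypassing the group, is to note that $\tilde\nabla$-parallel transport is a linear isometry and that the induction $(\nabla^g)^{k+1}R^g=-S\cdot(\nabla^g)^kR^g$ keeps every iterated covariant derivative of $R^g$ $\tilde\nabla$-parallel, so parallel transport preserves the full curvature jet and extends to a local isometry by the Cartan–Ambrose–Hicks principle.)
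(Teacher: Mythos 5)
Your proposal is correct and follows essentially the route the paper itself sketches just before the statement: build $\mathfrak{g}=\mathfrak{h}\oplus\mathfrak{m}$ from the holonomy algebra of $\tilde{\nabla}$ and the three brackets, verify the Jacobi identity via the parallelism of the torsion $T$ and curvature $\tilde{R}$ together with the Bianchi identities for a connection with torsion, and integrate to a transitive local isometric action (the skew-symmetry of $\tilde{R}(X,Y)$ coming from $\tilde{\nabla}g=0$). The only point worth flagging is that the $(\mathfrak{h},\mathfrak{m},\mathfrak{m})$ Jacobi identity is not purely formal either --- it needs $\mathfrak{h}$-invariance of $\tilde{R}$ and $T$ --- but this follows at once from the $\tilde{\nabla}$-parallelism you already established, via the Ricci identity applied to the curvature generators of $\mathfrak{h}$.
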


Here we recall the following result due to Sekigawa and Kirichenko. 
\begin{Theorem}[\cite{Kir,Sekigawa}]
Let $(M,g)$ be a Riemannian manifold equipped with a 
$G$-structure determined by a tensor field $F$ compatible 
to the metric $g$. 
Then the structure $(g,F)$ is locally homogeneous if and only if 
there exits a homogeneous Riemannian structure $S$ such that $F$ is parallel 
with respect to the Ambrose-Singer connection $\tilde{\nabla}=\nabla^{g}+S$. 
\end{Theorem}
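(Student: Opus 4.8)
The plan is to establish the two implications separately, reading the backward direction as a small enhancement of the Ambrose--Singer theorem already recorded above, and the forward direction through Singer's infinitesimal reconstruction of the transvection algebra.

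For the backward implication, suppose $S$ is a homogeneous Riemannian structure with $\tilde{\nabla}F=0$, where $\tilde{\nabla}=\nabla^{g}+S$. By the Ambrose--Singer theorem we have $\tilde{\nabla}g=\tilde{\nabla}R=\tilde{\nabla}S=0$, and the reconstruction recalled above produces a reductive presentation $M=G/H$ with $\mathfrak{g}=\mathfrak{h}\oplus\mathfrak{m}$, $\mathfrak{m}=T_{o}M$, $\mathfrak{h}$ the holonomy algebra of $\tilde{\nabla}$, and $\tilde{\nabla}$ the canonical connection of this reductive space. First I would observe that $\tilde{\nabla}F=0$ forces the value $F_{o}$ to be invariant under the restricted holonomy group of $\tilde{\nabla}$, which here coincides with $\mathrm{Ad}(H)$ by the very choice of $\mathfrak{h}$. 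Hence $F_{o}$ extends to a $G$-invariant tensor field $\hat{F}$, which is $\tilde{\nabla}$-parallel with $\hat{F}_{o}=F_{o}$; since two $\tilde{\nabla}$-parallel tensor fields agreeing at $o$ must coincide, $\hat{F}=F$, so $F$ is $G$-invariant. Thus $G$ acts transitively by isometries preserving $F$, and $(g,F)$ is locally homogeneous.

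For the forward implication, assume that $(g,F)$ is locally homogeneous, i.e. for any $p,q\in M$ there is a local isometry $\phi$ with $\phi(p)=q$ and $\phi_{*}F=F$. Here the plan is to run Singer's argument for the enriched structure: local homogeneity of $(g,F)$ is equivalent to the existence, for some finite order $k$, of a transitive family of linear isometries of tangent spaces preserving the jet data $g$, $R$, $\nabla^{g}R,\dots,(\nabla^{g})^{k}R$ together with $F,\nabla^{g}F,\dots,(\nabla^{g})^{k}F$. From this transitive family one extracts a Nomizu--Singer transvection algebra $\mathfrak{g}=\mathfrak{h}\oplus\mathfrak{m}$ and its canonical connection $\tilde{\nabla}$, and sets $S=\tilde{\nabla}-\nabla^{g}$. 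Because $\tilde{\nabla}$ is metric with curvature and torsion parallel, $S$ is a homogeneous Riemannian structure; and because every element of the defining pseudogroup preserves $F$ while $G$ is generated by such isometries, $F$ is $G$-invariant, whence $\tilde{\nabla}F=0$.

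I expect the forward implication to be the main obstacle. The delicate point is passing from the purely metric notion of local homogeneity of $(g,F)$ to the infinitesimal datum $S$ while ensuring that a single reductive complement $\mathfrak{m}$ simultaneously yields $\tilde{\nabla}g=\tilde{\nabla}R=\tilde{\nabla}S=0$ and $\tilde{\nabla}F=0$. This is exactly where Singer's theorem must be adapted to the $G$-structure, showing that $F$ and its covariant derivatives can be absorbed into the finite-order jet data controlling local homogeneity, so that one canonical connection governs both $g$ and $F$. By contrast, the remaining ingredients used above --- that $F_{o}$ is $\mathrm{Ad}(H)$-invariant, and that for the canonical connection whose isotropy equals its holonomy group $G$-invariance coincides with $\tilde{\nabla}$-parallelism --- are standard facts about reductive homogeneous spaces and require only routine verification.
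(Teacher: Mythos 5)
The paper does not prove this theorem at all --- it is quoted verbatim from Kirichenko and Sekigawa --- and your argument is essentially the standard proof underlying those citations: your backward implication is exactly the Ambrose--Singer reconstruction the paper records in Section \ref{sec:2.1}, enhanced by the (correct) observations that $\tilde{\nabla}F=0$ makes $F_{o}$ invariant under the restricted holonomy group, which equals $\mathrm{Ad}(H)$ on $\mathfrak{m}$ by the very choice of $\mathfrak{h}$ as the holonomy algebra, and that the resulting $G$-invariant parallel extension must coincide with $F$ since parallel tensor fields agreeing at one point of a connected manifold agree everywhere. The forward implication likewise follows the Singer-type route the cited sources take, and the single step you leave unproved --- absorbing $F$ and its $\nabla^{g}$-derivatives into the finite-order jet data of Singer's criterion so that one canonical connection parallelizes $g$, $R$, $S$ and $F$ simultaneously --- is precisely the technical content of Kirichenko's and Sekigawa's papers, which you correctly locate as the crux rather than eliding; so this is a faithful outline of the intended proof, not a divergent or gapped one.
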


\subsection{Homogeneous statistical structure}\label{sec:2.2}
Here we introduce the notion of homogeneous statistical manifold in the 
following manner.
\begin{Definition}
{\rm Let $(M,g,\nabla)$ be a
statistical manifold and $G$ a Lie group. 
Then $(M,g,\nabla)$ is said 
to be a 
\emph{homogenous statistical manifold} (or 
\emph{homogeneous statistical space}) if 
$G$ acts transitively on $M$ and the action is 
isometric with respect to $g$ and affine with 
respect to $\nabla$. 
}
\end{Definition}

\begin{Example}
{\rm Let $G$ be a Lie group equipped with a 
Riemannian metric $g$ and an affine connection $\nabla$.
If both  $g$ and $\nabla$ are invariant under left 
translations, then $(G,g,\nabla)$ is a 
homogenous statistical manifold. The resulting 
homogenous statistical manifold is called a 
\emph{statistical Lie group}.
}
\end{Example}

From Kirichenko-Sekigawa theorem we obtain the following fact.
\begin{Proposition}\label{prop:2.2.1}
Let $(M,g,C)$ be a homogeneous statistical manifold, then 
there exits a homogeneous Riemannian structure $S$ satisfying $\tilde{\nabla}C=0$. 

Conversely, let $(M,g,C)$ be a statistical manifold admitting a homogeneous Riemannian structure $S$ 
satisfying $\tilde{\nabla}C=0$, then $(M,g,C)$ is locally homogeneous.
\end{Proposition}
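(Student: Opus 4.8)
The plan is to deduce the statement directly from the Kirichenko--Sekigawa theorem by taking the cubic form $C$ itself as the compatible tensor field $F$. First I would record the elementary but crucial observation, already implicit in Section \ref{sec:1.1}, that local homogeneity of the statistical manifold $(M,g,\nabla)$ coincides with local homogeneity of the pair $(g,C)$: since $\nabla=\nabla^{g}-\tfrac{1}{2}K$ and $K$ is determined algebraically by $g$ and $C$ through $g(K(X)Y,Z)=C(X,Y,Z)$, a local isometry of $(M,g)$ automatically preserves $\nabla^{g}$, and hence is affine for $\nabla$ precisely when it preserves $C$. Thus a local isometry sending $p$ to $q$ and preserving $C$ is the same thing as a local automorphism of the statistical structure, and the two notions of local homogeneity agree.

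Next I would check that $C$, being a totally symmetric $(0,3)$-tensor field, determines a $G$-structure compatible with $g$ in the sense required by the theorem: its value $C_{o}$ at the origin reduces the orthonormal frame bundle to the stabilizer of $C_{o}$ inside $\mathrm{O}(T_{o}M,g_{o})$. Under either hypothesis of the proposition the orbit type of $C$ is constant --- in the homogeneous case because $G$ acts transitively preserving $C$, and in the converse case because $C$ is $\tilde{\nabla}$-parallel while $\tilde{\nabla}g=0$ --- so the reduction is a genuine $G$-structure and the Kirichenko--Sekigawa theorem applies with $F=C$.

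With these identifications the two directions follow formally. For the forward direction, if $(M,g,\nabla)$ is a homogeneous statistical manifold then the transitive $G$-action preserves both $g$ and $\nabla$, hence preserves the tensor $C=\nabla g$; therefore $(g,C)$ is homogeneous, in particular locally homogeneous, and the theorem yields a homogeneous Riemannian structure $S$ with $C$ parallel for $\tilde{\nabla}=\nabla^{g}+S$, i.e. $\tilde{\nabla}C=0$. Conversely, if a statistical manifold $(M,g,C)$ admits a homogeneous Riemannian structure $S$ with $\tilde{\nabla}C=0$, then by the theorem the structure $(g,C)$ is locally homogeneous, and by the first paragraph this is exactly local homogeneity of $(M,g,\nabla)$ as a statistical manifold. (One may also read off $S$ concretely, in the homogeneous case, as $S=\nabla^{\mathrm c}-\nabla^{g}$ for the canonical connection $\nabla^{\mathrm c}$ of a reductive decomposition, with respect to which every $G$-invariant tensor, including $C$, is parallel.)

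I expect the main obstacle to be the verification that $C$ gives a bona fide $G$-structure, that is, that the pointwise stabilizer of $C$ inside the orthogonal group has constant conjugacy type, so that the hypotheses of the Kirichenko--Sekigawa theorem are literally met rather than merely heuristically. In the present setting this constancy is guaranteed by the transitivity of the action (forward direction) or by the $\tilde{\nabla}$-parallelism of $C$ together with $\tilde{\nabla}g=0$ (converse direction), but it is the point that genuinely deserves care; everything else is a straightforward translation between the statistical language $(g,\nabla)$ and the Codazzi-pair language $(g,C)$.
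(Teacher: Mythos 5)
Your argument is correct and coincides with the paper's own route: the paper obtains Proposition \ref{prop:2.2.1} precisely by invoking the Sekigawa--Kirichenko theorem with the cubic form $C$ in the role of the compatible tensor field $F$, which is exactly your plan. The details you supply --- the equivalence between local homogeneity of $(M,g,\nabla)$ and of the pair $(g,C)$ via $\nabla=\nabla^{g}-\tfrac{1}{2}K$, and the constancy of the pointwise stabilizer of $C$ needed for a genuine $G$-structure --- are left implicit in the paper, so your write-up is a faithful elaboration of the same proof rather than a different one.
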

On a statistical manifold $M=(M,g,C)$, a 
\emph{homogeneous statistical structure} is a 
homogeneous Riemannian structure $S$ satisfying 
$\tilde{\nabla}C=0$.


\subsection{Sasakian statistical manifolds}\label{sec:2.3}
Here we exhibit examples of odd-dimensional homogeneous statistical manifolds.
For this purpose we recall the following notion:
\begin{Definition}{\rm 
A one-form $\eta$ on a $(2n+1)$-dimensional manifold $M$ is said to be a 
\emph{contact form} if it satisfies $(d\eta)^n\wedge \eta\not=0$. 
A manifold $M$ equipped with a contact form is called a 
\emph{contact manifold}.
}
\end{Definition}
The notion of contact form has its origin in 
analytical mechanics and optics (Huygens' principle, 
wave fronts).
We can see that Lie \cite{Lie} studied contact transformation 
\newline
\noindent(Ber{\"u}hrungstransformation). Moreover 
contact manifolds play fundamental role 
in the study of Thermodynamics \cite{HL}.

On a contact manifold $(M,\eta)$, there exits a 
unique vector field $\xi$ (called the \emph{Reeb vector field}) 
satisfying $\eta(\xi)=1$ and $d\eta(\xi,\cdot)=0$. Moreover there exits an
endomorphism field $\varphi$ and a Riemannian metric $g$ 
satisfying 
\[
\varphi^{2}=-\mathrm{Id}+\eta\otimes\xi,\quad 
d\eta=g(\cdot,\varphi),\quad 
g(\varphi \cdot,\varphi \cdot)=g-\eta\otimes\eta.
\]
The quartet $(\varphi,\xi,\eta,g)$ of structure tensor fields  is 
often called a \emph{contact Riemannian structure} on $M$.

Let us introduce a Riemannian cone 
\[
C(M)=(\mathbb{R}^{+}\times M, dr^2+r^2g,d(r^2\eta))
\]
equipped with a $2$-form $d(r^2\eta)$. 
Then $M$ is said to be a 
\emph{Sasakian manifold} if its Riemannian cone 
is a K{\"a}hler manifold.

Sasakian manifolds have been paid much attention 
of mathematicians as well as 
theoretical physicists. 
Indeed, the so-called AdS/CFT correspondences 
is interpreted as a conjecture 
that 
the type IIB string theory on 
the product manifold $\mathrm{AdS}_5\times \mathrm{SE}^5$ 
of anti de Sitter 5-space time 
$\mathrm{AdS}_5$ (which is Lorentz-Sasakian) and 
Sasaki-Einstein $5$-manifold 
$\mathrm{SE}^5$ is 
equivalent to 4D N=1 superconformal quiver 
gauge theory.

On a Sasakian manifold $M$, the Reeb vector field 
is a Killing vector field. Thus geodesics 
which are initially orthogonal to $\xi$ remain ortogonal to $\xi$. 
Such a geodesic is called a $\varphi$-geodesic.
One can consider local reflections around $\varphi$-geodesics. 
Those reflection are called $\varphi$-geodesic symmetries.
More precisely, a local diffeomorphism $s_p$ of a Sasakian manifold 
$M$ is said to be a $\varphi$-\emph{geodesic symmetry} with 
base 
point $p\in M$ if for each 
$\varphi$-geodesic $\gamma(s)$ such that the initial 
point $\gamma(0)$ lies in the trajectory of $\xi$ passing 
through $p$, $s_p$ sends $\gamma(s)$ to $\gamma(-s)$ for any $s$.
Since the points of the Reeb flow through $p$ are fixed 
by $s_p$, the $\varphi$-geodesic symmetry 
$s_p$ is represented as 
\[
s_{p}=\exp_{p}\circ (\mathrm{Id}_{p}+2\eta_{p}\otimes\xi_p)
\circ \exp_{p}^{-1}
\]
on the normal neighborhood of $p$.

The notion of locally $\varphi$-symmetric space
was introduced by Takahashi \cite{Takahashi} and 
reformulated by Buken and Vanhecke \cite{BuV}
in the folloing mannar:
\begin{Definition}
A locally $\varphi$-\emph{symmetric space} is a
Sasakian manifold whose local $\varphi$-geodesic 
symmetries are isometric.
\end{Definition}
A locally $\varphi$-symmetric space is 
said to be a \emph{Sasakian} $\varphi$-\emph{symmetric space} 
if $\xi$ is a complete vector field and 
all the $\varphi$-geodesic symmetries are global 
isometries. On a Sasakian $\varphi$-symmetric space,
$M$ the Lie group $G$ of automorphims acts 
transitively. Thus $M$ is a homogeneous space of $G$.

One can see that complete and simply connected 
locally $\varphi$-symmetric spaces are Sasakian 
$\varphi$-symmetric space. Moreover 
it is known that every Sasakian $\varphi$-symmetric 
space $M$ is a principal circle bundle 
or principal line bundle 
over a Hermitian symmetric space.

From homogeneous geometric viewpoint, we 
emphasize that  
Sasakian $\varphi$-symmetric spaces are 
naturally reductive.   

Let us introduce a one-parameter family of linear connections 
on a (general) Sasakian manifold $M$:
\[
\tilde{\nabla}^{r}_{X}Y=\nabla^{g}_{X}Y+A^{r}(X)Y,
\quad A^{r}(X)Y=g(X,\varphi Y)\xi-r\eta(X)\varphi Y
+\eta(Y)\varphi X.
\]
The connection $\tilde{\nabla}^r$ with $r=1$ is called the 
\emph{Okumura connection}. On the other hand, $\nabla^r$ 
with $r=-1$ is called the 
\emph{Tanaka-Webster connection} (studied in 
CR-geometry). 
Every Sasakian manifold satisfies
\[
\tilde{\nabla}^{r}\varphi=0,
\quad 
\tilde{\nabla}^{r}\xi=0,
\quad
\tilde{\nabla}^{r}\eta=0,
\quad
\tilde{\nabla}^{r}g=0.
\]
Let us denote by $\tilde{R}^r$ the curvature tensor field 
of $\tilde{\nabla}^{r}$. Then
one can check that $M$ is locally 
$\varphi$-symmetric if and only if $\tilde{\nabla}^{r}R^{r}=0$
for some $r$ (and in turn all $r$).  
On a locally $\varphi$-symmetric space $M$, 
$A^r$ is a homogeneous Riemannian structure. 
In particular $A^{1}$ is a naturally reductive 
homogeneous structure.

\begin{Proposition}[\cite{BV}]
Let $M$ be a Sasakian $\varphi$-symmetric space. Then 
$M$ is a naturally reductive homogeneous space 
whose canonical connection is $\tilde{\nabla}^1=\nabla^g+A^1$.
\end{Proposition}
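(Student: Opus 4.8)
The plan is to combine three ingredients: the transitivity already available for Sasakian $\varphi$-symmetric spaces, the fact that $A^1$ is a homogeneous Riemannian structure on a locally $\varphi$-symmetric space, and a purely algebraic identification of $A^1$ as a \emph{naturally reductive} homogeneous structure. First I would recall from the preceding discussion that, by definition, a Sasakian $\varphi$-symmetric space is complete and its automorphism group $G$ acts transitively, so $M=G/H$ is a homogeneous Riemannian space; moreover it is locally $\varphi$-symmetric, whence $\tilde{\nabla}^{r}R^{r}=0$ for all $r$, and each $A^{r}$ is a homogeneous Riemannian structure, i.e. $\tilde{\nabla}^{r}g=0$, $\tilde{\nabla}^{r}R^{g}=0$ and $\tilde{\nabla}^{r}A^{r}=0$. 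In particular $A^{1}$ is a homogeneous Riemannian structure for $\tilde{\nabla}^{1}=\nabla^{g}+A^{1}$, so it remains only to pin down the \emph{type} of this structure.

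The heart of the argument is to show that $A^{1}$ is of Tricerri--Vanhecke type $\mathcal{T}_3$, which amounts to checking that the trilinear form $T(X,Y,Z):=g(A^{1}(X)Y,Z)$ is totally skew-symmetric. Writing $\Phi(X,Y)=g(X,\varphi Y)$ for the fundamental two-form and using that $\varphi$ is $g$-skew-adjoint (so $\Phi$ is skew-symmetric) together with $\eta=g(\xi,\cdot)$, I would compute
\[
T(X,Y,Z)=\Phi(X,Y)\eta(Z)+\eta(X)\Phi(Y,Z)-\eta(Y)\Phi(X,Z),
\]
and then verify directly that $T(X,Y,Z)=-T(Y,X,Z)=-T(X,Z,Y)$. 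It is worth recording that this total skew-symmetry holds \emph{precisely} for $r=1$: for general $r$ the antisymmetrization of $g(A^{r}(X)Y,Z)$ in the first two slots forces $r=1$, so the choice $r=1$ is exactly what singles out the naturally reductive member of the family $\{A^{r}\}$. This computation is routine but it is the genuine content of the statement.

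Finally I would invoke the standard characterization of naturally reductive homogeneous spaces in terms of homogeneous structures (Tricerri--Vanhecke, following Kostant): a homogeneous Riemannian structure of type $\mathcal{T}_3$ determines a naturally reductive reductive decomposition whose canonical (Ambrose--Singer) connection is $\nabla^{g}+S$. Applying this with $S=A^{1}$ yields that $M$ is naturally reductive with canonical connection $\tilde{\nabla}^{1}=\nabla^{g}+A^{1}$, as claimed. The main obstacle is not the computation of $T$ but the structural bookkeeping behind it: one must ensure that the homogeneous-structure conditions are genuinely in force, in particular that $\tilde{\nabla}^{1}R^{g}=0$ (deduced from $\tilde{\nabla}^{1}R^{1}=0$ together with the $\tilde{\nabla}^{1}$-parallelism of $g$, $\varphi$, $\xi$, $\eta$), since only then does the type-$\mathcal{T}_3$ identity upgrade to the assertion about the canonical connection.
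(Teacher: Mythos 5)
Your proposal is correct and takes essentially the route the paper itself indicates: the proposition is quoted from \cite{BV} without proof, but the paragraphs preceding it already record precisely your first two ingredients (transitivity of the automorphism group and the fact that $A^{r}$ is a homogeneous Riemannian structure on a locally $\varphi$-symmetric space), and your verification that $g(A^{1}(X)Y,Z)=\Phi(X,Y)\eta(Z)+\eta(X)\Phi(Y,Z)-\eta(Y)\Phi(X,Z)$ is totally skew-symmetric, fed into the Tricerri--Vanhecke characterization of type-$\mathcal{T}_3$ structures, is exactly what makes the paper's phrase ``$A^{1}$ is a naturally reductive homogeneous structure'' precise. Your computations check out, including the observation that total skewness singles out $r=1$ and the bookkeeping that $\tilde{\nabla}^{1}R^{g}=0$ follows from $\tilde{\nabla}^{1}R^{1}=0$ together with $\tilde{\nabla}^{1}$-parallelism of $g$ and $A^{1}$.
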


Now let us turn our attention to statistical manifolds again.
The notion of statistical Sasakian manifold is introduced as
follows \cite{FHOSS}:

\begin{Definition}
A \emph{Sasakian statistical structure} 
on an odd-dimensional manifold $M$ is a 
quintet $(\varphi,\xi,\eta,g,\nabla)$ such that
\begin{itemize}
\item $(M,\varphi,\xi,\eta,g)$ is a Sasakian manifold.
\item $(\nabla,g)$ is a statistical structure.
\item The skewness operator $K$ satisfies $K(X)\varphi Y+\varphi K(X)Y=0$.
\end{itemize}
\end{Definition}
Let 
$(M,\varphi,\xi,\eta,g,\nabla)$ be a 
Sasakian statistical manifold and  $\nabla^{(\alpha)}$ the 
$\alpha$-connection $\nabla^{(\alpha)}$. 
Then one can check that $(M,\varphi,\xi,\eta,g,\nabla^{(\alpha)})$ is 
Sasakian statistical for any $\alpha\in\mathbb{R}$.

Now let us take a Sasakian $\varphi$-symmetric space $(M,\varphi,\xi,\eta,g)$. 
We consider a trilinear form 
$C=\eta\otimes\eta\otimes\eta$. Then the corresponding skewness operator 
is $K(X)Y=\eta(X)\eta(Y)\xi$. Thus we obtain a statistical 
structure $(g,\nabla)$ with $\nabla=\nabla^g-\frac{1}{2}K$. 
Note that $(\varphi,\xi,\eta,g,\nabla)$ is Sasakian statistical.
Moreover, since $\tilde{\nabla}^r\eta=0$, 
we have $\tilde{\nabla}^r C=0$. Thus 
the statistical structure $(g,\nabla)$ is homogeneous.

\begin{Proposition}\label{prop:2.5}
Let $M$ be a Sasakian $\varphi$-symmetric space. 
Introduce a one-parameter family of linear connections on $M$ by
\[
\nabla^{(\alpha)}_{X}Y=\nabla^{g}_{X}Y-\frac{\alpha}{2}\eta(X)\eta(Y)\xi,
\quad \alpha\in\mathbb{R}.
\]
Then $(M,\varphi,\xi,\eta,g,\nabla^{(\alpha)})$ is a homogeneous 
Sasaklian statistical manifold.
\end{Proposition}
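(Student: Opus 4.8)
The plan is to assemble the proposition from the structural facts already recorded above, checking that the case $\alpha=1$ treated in the preceding discussion propagates to all $\alpha$. First I would observe that the stated connection can be rewritten as $\nabla^{(\alpha)}_X Y = \nabla^g_X Y - \frac{\alpha}{2}K(X)Y$, where $K(X)Y = \eta(X)\eta(Y)\xi$ is the skewness operator associated to the cubic form $C = \eta\otimes\eta\otimes\eta$. Indeed, using $\eta = g(\xi,\cdot)$ (which follows from $g(\varphi\cdot,\varphi\cdot)=g-\eta\otimes\eta$ evaluated at $\xi$ together with $\varphi\xi=0$) one checks $g(K(X)Y,Z)=\eta(X)\eta(Y)\eta(Z)=C(X,Y,Z)$. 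Thus $\nabla^{(\alpha)}$ is exactly the $\alpha$-connection of the base statistical structure $(g,\nabla)$ with $\nabla=\nabla^{(1)}$, and by the general formula of Section \ref{sec:1.6} the derivative $(\nabla^{(\alpha)}_X g)(Y,Z)=\alpha\,C(X,Y,Z)$ is totally symmetric while $\nabla^{(\alpha)}$ is torsion free; hence $(M,g,\nabla^{(\alpha)})$ is a statistical manifold for every $\alpha\in\mathbb{R}$.

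Next I would verify the Sasakian statistical condition. The skewness operator of $\nabla^{(\alpha)}$ is $\alpha K$, so it suffices to check $K(X)\varphi Y + \varphi K(X)Y = 0$ for the base operator $K$. Using $\eta\circ\varphi=0$ I get $K(X)\varphi Y = \eta(X)\eta(\varphi Y)\xi = 0$, and using $\varphi\xi=0$ I get $\varphi K(X)Y = \eta(X)\eta(Y)\varphi\xi = 0$; both terms vanish identically, so the condition holds for every $\alpha$. Equivalently, this is the instance of the fact recorded after the definition of Sasakian statistical structure, namely that the $\alpha$-connection of a Sasakian statistical manifold is again Sasakian statistical. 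Therefore $(M,\varphi,\xi,\eta,g,\nabla^{(\alpha)})$ is a Sasakian statistical manifold for each $\alpha$.

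Finally, for homogeneity I would invoke the Blair--Vanhecke fact \cite{BV} that a Sasakian $\varphi$-symmetric space is naturally reductive with canonical connection $\tilde{\nabla}^1=\nabla^g+A^1$, so that $S=A^1$ is a homogeneous Riemannian structure. The cubic form of $\nabla^{(\alpha)}$ is $\alpha C = \alpha\,\eta\otimes\eta\otimes\eta$, and since every Sasakian manifold satisfies $\tilde{\nabla}^r\eta=0$ I obtain $\tilde{\nabla}^1(\alpha C)=\alpha\,\tilde{\nabla}^1(\eta\otimes\eta\otimes\eta)=0$. Proposition \ref{prop:2.2.1} then shows $(M,g,\alpha C)$ is locally homogeneous; since a Sasakian $\varphi$-symmetric space is globally homogeneous under its automorphism group $G$, and $G$ preserves $g$ together with $\eta$ and $\xi$ (hence $C$ and $\nabla^{(\alpha)}$), the $G$-action is isometric and affine, giving genuine homogeneity. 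The only point requiring care is that a single structure $A^1$ simultaneously satisfies $\tilde{\nabla}^1 g=\tilde{\nabla}^1 R=\tilde{\nabla}^1 S=0$ and $\tilde{\nabla}^1 C=0$; this is precisely where the parallelism $\tilde{\nabla}^r\eta=0$ of the contact form does the work, and is the main thing to confirm.
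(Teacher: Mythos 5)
Your proposal is correct and follows essentially the same route as the paper: you take the cubic form $C=\eta\otimes\eta\otimes\eta$ with skewness operator $K(X)Y=\eta(X)\eta(Y)\xi$, use the stability of the Sasakian statistical condition under $\alpha$-connections, and derive homogeneity from $\tilde{\nabla}^{r}\eta=0$ (hence $\tilde{\nabla}^{r}C=0$) together with the homogeneous Riemannian structure $A^{1}$ of the $\varphi$-symmetric space and Proposition \ref{prop:2.2.1}. The only difference is that you make explicit the verifications (the identity $g(K(X)Y,Z)=C(X,Y,Z)$, the vanishing of $K(X)\varphi Y+\varphi K(X)Y$, and the passage from local to global homogeneity via the automorphism group) that the paper leaves implicit.
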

Here we exhibit only two (well known) compact examples. 

\begin{Example}[Odd dimensional sphere]
{\rm Let $\mathbb{S}^{2n+1}\subset\mathbb{C}^{n+1}$ be 
the unit $(2n+1)$-sphere. Then the K{\"a}hler structure 
of the complex Euclidean $(n+1)$-space induces a 
Sasakian structure on $\mathbb{S}^{2n+1}$. Then 
the resulting Sasakian manifold $\mathbb{S}^{2n+1}$ is a
a Sasakian $\varphi$-symmetric space 
represented as $\mathbb{S}^{2n+1}=
\mathrm{SU}_{n+1}/\mathrm{SU}_n$. 
The Reeb flow constitute a circle group $\mathrm{U}_1$. 
The factor space of $\mathbb{S}^{2n+1}$ under Reeb flows 
is the complex projective space 
$\mathbb{C}P_n=\mathrm{SU}_{n+1}
/\mathrm{S}(\mathrm{U}_1\times\mathrm{U}_n)$ which is 
Hermitian symmetric. The Sasakian sphere 
$\mathbb{S}^{2n+1}=
\mathrm{SU}_{n+1}/\mathrm{SU}_n$ is naturally reductive and 
Sasakian statistical with respect to the cubic form $\alpha \eta\otimes\eta
\otimes\eta$.
}
\end{Example}

\begin{Example}[The Stiefel manifold]
{\rm Let $\mathrm{Sti}_{2}(\mathbb{R}^{n+1})$ be 
the Stiefel manifold of oriented orthonormal $2$-frames 
in Euclidean $(n+1)$-space. The Stiefel manifold 
is a principal circle bundle over the 
Grassmannian manifold $\widetilde{\mathrm{Gr}}_{2}(\mathbb{R}^n)$ 
of oriented $2$-planes in Euclidean $(n+1)$-space.
As is well known $\widetilde{\mathrm{Gr}}_{2}(\mathbb{R}^n)$ is a
Hermitian symmetric space 
$\mathrm{SO}_{n+1}/\mathrm{S}(\mathrm{O}_2\times\mathrm{O}_{n-1})$.
The Stiefel manifold 
$\mathrm{Sti}_{2}(\mathbb{R}^{n+1})=\mathrm{SO}_{n+1}/\mathrm{SO}_{n-1}$ is 
a Sasakian $\varphi$-symmetric space fibered over $\widetilde{\mathrm{Gr}}_{2}(\mathbb{R}^n)$.
Thus $\mathrm{Sti}_{2}(\mathbb{R}^{n+1})$ is Sasakian statistical with respect to the cubic form $\alpha \eta\otimes\eta
\otimes\eta$. 

Note that if we regard $\mathrm{Sti}_{2}(\mathbb{R}^{n+1})$ as 
the unit tangent sphere bundle $\mathrm{U}\mathbb{S}^n$, then 
$\widetilde{\mathrm{Gr}}_{2}(\mathbb{R}^n)$ is identified with 
the space $\widetilde{\mathrm{Geo}}(\mathbb{S}^3)$ of 
all oriented geodesics in $\mathbb{S}^n$. 
In particular, $\mathrm{U}\mathbb{S}^3$ is a trivial sphere bundle 
$\mathbb{S}^3\times\mathbb{S}^2$. 
The unit tangent sphere bundle $\mathrm{U}\mathbb{S}^3$ 
has been paid attention by differential geometers as well as 
theoretical physicists. Indeed, via the pseudo-homothetic deformation of 
the naturally reductive metric, we obtain a Sasaki-Einstein metric 
on $\mathbb{S}^3\times\mathbb{S}^2$ which is called the \emph{Kobayashi-Tanno metric}. 
On the other hand, Gauntlett, Martelli, Sparks and Waldram \cite{GMSW} 
constructed cohomogeneity one 
Sasaki-Einstein metrics on $\mathbb{S}^3\times\mathbb{S}^2$. 
These give rise to solutions of type IIB super-gravity. 

}
\end{Example}

\section{Left invariant  connections on Lie groups}\label{sec:3}
 Let $G$ be a connected 
 Lie group and denote by $\mathfrak{g}$ 
 the Lie algebra of $G$, that is, 
 the tangent space $T_{e}G$ of $G$ at the unit element $ e \in G$. 
 In this section we consider left invariant linear 
 connections on $G$. 
Let $\theta$ be the left invariant Maurer-Cartan form 
 of $G$. By definition, for any tangent vector $X_a$ of $G$ at $a\in G$, we have 
 $\theta _a (X)=(dL_{a})^{-1}_{a} X_a\in\mathfrak{g}$. 
 Here $L_a$ denotes the left translation 
 by $a$ in $G$. 

\subsection{Cartan-Schouten's connections}\label{sec:3.1}
 Take a bilinear map 
 $\mu:\mathfrak{g}\times \mathfrak{g}\to \mathfrak{g}$.
 Then we can define a left invariant linear connection $\nabla^\mu$ on $G$ by
 its value at the unit element $e \in G$ by
\[
\nabla^{\mu}_{X}Y=\mu(X,Y), \ \ X,Y
\in \mathfrak{g}.
\]
\begin{Proposition}[\cite{Nomizu}]\label{allconnections}
Let $\mathcal{B}_\mathfrak{g}$ be the linear 
 space of all $\mathfrak{g}$-valued 
 bilinear maps on $\mathfrak{g}$ and 
 $\mathcal{A}_G$ the affine space of all 
 left-invariant linear connections on $G$.
 Then the map
\[
\mathcal{B}_{\mathfrak g}\ni \mu\longmapsto \nm \in \mathcal{A}_G 
\] 
 is a bijection between $\mathcal{B}_{\mathfrak g}$ 
 and $\mathcal{A}_G$. The torsion $T^\mu$ of $\nm$ is given by
\[
T^{\mu}(X,Y)=-[X,Y]+\mu(X,Y)-\mu(Y,X)
\]
for all $X$, $Y\in \mathfrak{g}$.
\end{Proposition}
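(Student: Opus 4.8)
The plan is to exploit that $G$ is parallelizable by left-invariant vector fields, so that a left-invariant connection is recorded by finitely many constants---its Christoffel symbols in such a frame---which is precisely the data of a bilinear map. Fix a basis $\{e_1,\dots,e_n\}$ of $\mathfrak{g}$ and let $\{E_1,\dots,E_n\}$ be the corresponding left-invariant vector fields, forming a global frame on $G$; write $X=\sum_i f^iE_i$ and $Y=\sum_j g^jE_j$ for arbitrary $X,Y\in\mathfrak{X}(G)$.

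First I would construct the inverse assignment $\nm\mapsto\mu$. Given $\nabla\in\mathcal{A}_G$, the field $\nabla_XY$ is again left-invariant whenever $X,Y\in\mathfrak{g}$, since this is exactly left-invariance, $(L_a)_*(\nabla_XY)=\nabla_{(L_a)_*X}(L_a)_*Y=\nabla_XY$. Hence $\mu(X,Y):=\nabla_XY\in\mathfrak{g}$ is well defined, and it is $\mathbb{R}$-bilinear because constant coefficients pass through both slots (the Leibniz term $(Xc)Y$ vanishes for constant $c$). Applying the connection axioms to general $X,Y$ yields
\[
\nabla_XY=\sum_{i,j}f^i\bigl[(E_ig^j)E_j+g^j\,\mu(e_i,e_j)\bigr],
\]
so $\nabla$ is completely determined by $\mu$; this gives injectivity of $\mu\mapsto\nm$ and shows the restriction map recovers $\mu$.

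For surjectivity I would run this in reverse: given $\mu\in\mathcal{B}_{\mathfrak g}$ define $\nm$ by the displayed formula, i.e.\ set $\nabla_{E_i}E_j=\mu(e_i,e_j)$ (constant Christoffel symbols) and extend by $\mathbb{R}$-bilinearity, $C^\infty(G)$-linearity in the first slot, and the Leibniz rule in the second. Checking that this is a genuine linear connection reduces to the two connection axioms, which follow from the splitting $E_i(hg^j)=(E_ih)g^j+h(E_ig^j)$. The step needing the most care is left-invariance: because $\nabla_{E_i}E_j=\sum_k\Gamma_{ij}^kE_k$ has constant coefficients and $(L_a)_*E_i=E_i$, one verifies $(L_a)_*(\nabla_{E_i}E_j)=\nabla_{E_i}E_j$ on the frame and then propagates this to all of $\mathfrak{X}(G)$ using that $(L_a)_*$ intertwines the connection axioms. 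Evaluating at $X=e_i$, $Y=e_j$ shows $\nm$ restricts to $\mu$, so $\mu\mapsto\nm$ is a two-sided inverse of $\nm\mapsto\mu$ and the correspondence is a bijection.

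Finally the torsion formula is immediate once the bijection is established: for $X,Y\in\mathfrak{g}$ the bracket $[X,Y]$ of the left-invariant fields is the Lie algebra bracket, so
\[
T^{\mu}(X,Y)=\nm_XY-\nm_YX-[X,Y]=\mu(X,Y)-\mu(Y,X)-[X,Y],
\]
which is the claimed identity after reordering. I expect the only real obstacle to be the bookkeeping in the left-invariance check, but this amounts to the standard fact that constancy of the Christoffel symbols in a left-invariant frame is equivalent to left-invariance of the connection.
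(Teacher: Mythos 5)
Your proof is correct and follows essentially the intended route: the paper states this proposition without proof, citing Nomizu, and your argument---identifying a left-invariant connection with its constant Christoffel symbols in a left-invariant frame (well-definedness of $\mu(X,Y)=\nabla_XY\in\mathfrak{g}$ via $(L_a)_*(\nabla_XY)=\nabla_XY$, reconstruction of $\nabla$ from $\mu$ by the frame expansion, and the direct computation $T^{\mu}(X,Y)=\mu(X,Y)-\mu(Y,X)-[X,Y]$ on left-invariant fields)---is precisely the classical proof behind that citation.
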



\begin{Definition}\label{def:threeconnections}
 We define a one parameter family 
 $\{ {}^{(t)}\nabla \;| \;t\in \mathbb{R}\}$ of 
 left invariant linear connections  $\nt:=\nabla^\mu$ by
$\mu={}^{(t)}\mu$, where
\begin{equation}\label{eq:familyconnection}
{}^{(t)}\mu(X,Y):=\frac{1}{2}(1+t) [X, Y],\ \  X,Y \in \mathfrak{g}.
\end{equation}
It is straightforward to verify that the torsion of ${}^{(t)}\nabla $ is given by 
${}^{(t)}T(X,Y) = t [X,Y]$.

 There are three particular connections in the family $\{\nt\; |\;t \in \mathbb R\}$:
\begin{enumerate}
\item The \emph{canonical connection}: ${}^{(-1)}\nabla $ is defined by
setting  $t =- 1$.

\item The \emph{anti-canonical connection}  : ${}^{(1)}\nabla $
is defined setting $t =1$.

\item The \emph{neutral connection}: ${}^{(0)}\nabla $ is defined by 
setting  $t =0$.
\end{enumerate} 
\end{Definition}

\begin{Remark}\label{rm:threeconnections}
{\rm The canonical connection and anti-canonical connection were 
 defined in \cite{KN2} and  \cite{Agricola}, respectively.
 Among these three connections, $\neutral$ 
 is the only torsion free connection.
 In \cite{KN2}, these three connections $\can$, $\anti$ 
 and $\neutral$ are called \emph{Cartan-Schouten's} 
 $(-)$-\textit{connection}, $(+)$-\textit{connection} 
 and $(0)$-\textit{connection}, 
 respectively. 
}
\end{Remark}

\begin{Remark} 
{\rm  The set of all \emph{bi-invariant} linear connections on $G$ 
 is parametrized by 
\[
\mathcal{B}^{\mathsf{bi}}_{\mathfrak g}=
\left\{ \mu \in \mathcal{B}_{\mathfrak g}\  \vert \ 
\mu(\ad (g)X,\ad (g)Y)=\ad (g)
\mu(X,Y),\ \ \textrm{for any}\ X,Y\in \mathfrak{g}, 
\ g\in G\right\}.
\]
 Laquer \cite{Laquer} proved that for any compact simple 
 Lie group $G$, the set 
 $\mathcal{B}^{\mathsf{bi}}_{\mathfrak g}$ is $1$-dimensional except 
 for the case $G=\mathrm{SU}_n$ with $n\geq 3$. 
 More precisely, the set $\mathcal{A}^{\mathsf{bi}}_{G}$ of all
 bi-invariant linear connections on $G$, except $\mathrm{SU}_n$ ($n\geq 3$), 
 is given by 
\[
\mathcal{A}^{\mathsf{bi}}_{G}=
\{{}^{(t)}\nabla\ \vert \ t\in \mathbb{R}\}.
\]
In case 
$G=\mathrm{SU}_n$ with $n\geq 3$, $\mathcal{B}^{\mathsf{bi}}_{\mathfrak{su}_n}$ 
is $2$-dimensional. The set $\mathcal{B}_{\mathfrak{su}(n)}^{\mathsf{bi}}$ 
is parametrized as
\[
\mathcal{B}_{\mathfrak{su}(n)}^{\mathsf{bi}}
=\{{}^{(t,s)}\mu\ \vert \ t,s\in \mathbb{R}\}
\]
with
${}^{(t,s)}\mu(X,Y)=
\frac{1}{2}(1+t)[X,Y]
+\sqrt{-1}\:s\left(
(XY+YX)-\frac{2}{n}\mathrm{tr}\>(XY)I
\right)$. 
Her $I$ denotes the identity matrix. 
Ikawa \cite{Ikawa} showed that the bi-invariant 
linear connection determined by ${}^{(t,s)}\mu$ with 
$t=0$ and $s=-n/(2\sqrt{n^2+4})$ is Yang-Mills.
}
\end{Remark}

\section{Left invariant metrics on Lie groups}\label{sec:4}
 Here we study Levi-Civita connection of 
 left invariant metrics on Lie groups.

 We equip an inner product $\langle\cdot,\cdot\rangle$ 
 on the Lie algebra $\mathfrak{g}$ of a  Lie group $G$ 
 and extend it to a left invariant Riemannian metric 
 $g=\langle \cdot,\cdot\rangle$ on $G$.
 Here we define a symmetric bilinear map 
 $U:\mathfrak{g}\times\mathfrak{g}\to \mathfrak{g}$ 
 by (\textit{cf.} \cite[Chapter X.3.]{KN2}):
\begin{equation}\label{NR}
 2\langle U(X,Y),Z\rangle 
 =\langle X,[Z,Y]\rangle+\langle Y,[Z,X]\rangle,
 \ \   X,Y,Z \in \mathfrak{g}.
\end{equation}
By using the ad-representation, $U$ is rewritten as
\[
2\langle U(X,Y),Z\rangle 
 =\langle X,\mathrm{ad}(Z)Y\rangle
 +\langle \mathrm{ad}(Z)X,Y\rangle.
\]
This formula implies that 
$g$ is right invariant if and only if $U=0$.
 The Levi-Civita connection $\nabla^{g}$ of 
 $(G,g)$ is given by 
\begin{equation}\label{Levi-CivitaRelation}
\nabla^{g}_{X}Y=\frac{1}{2}[X,Y]+U(X,Y), \;\;X, Y \in \mathfrak g.
\end{equation}
This formula is a variant of well-known Koszul formula.
 Hence $\nabla^{g}$ is a left invariant connection 
 $\nabla^{\mu}$ with the bilinear map
 \[
 \mu(X, Y)=\frac{1}{2}[X,Y] +U(X, Y)
 \]
 The formula \eqref{Levi-CivitaRelation} 
 implies that the left invariant metric 
 $g$ is bi-invariant if and only if $\nabla=\neutral$.

\begin{Proposition}
The Levi-Civita connection $\nabla^{g}$ is a 
left invariant 
connection determined by the bilinear map $\mu$ such that 
\begin{equation}\label{eq:skew-symm}
(\sk \mu)(X,Y)=\frac{1}{2}[X,Y],
\ \ 
(\sym \mu)(X,Y)=U(X,Y).
\end{equation}
Here $\sk \mu$ and $\sym \mu$ are skew-symmetric part and 
symmetric part of $\mu$, respectively.
\end{Proposition}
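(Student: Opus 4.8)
The plan is to invoke the explicit Koszul-type formula \eqref{Levi-CivitaRelation} for the Levi-Civita connection together with the elementary observation that every element of $\mathcal{B}_{\mathfrak g}$ decomposes uniquely as the sum of its symmetric and skew-symmetric parts. Since this decomposition is unique, it will suffice to show that the two terms comprising $\mu$ already sit in the correct summands: that $\frac{1}{2}[X,Y]$ is skew-symmetric in $(X,Y)$ and that $U(X,Y)$ is symmetric.

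First I would recall, using Proposition \ref{allconnections}, that $\nabla^{g}$ is the left-invariant connection $\nabla^{\mu}$ associated to the bilinear map $\mu(X,Y)=\frac{1}{2}[X,Y]+U(X,Y)$, exactly as displayed just above the statement. The skew-symmetry of the first term is immediate from the antisymmetry of the Lie bracket, since $[Y,X]=-[X,Y]$, so $\frac{1}{2}[\cdot,\cdot]$ contributes nothing to the symmetric part.

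The one step that actually requires verification is the symmetry of $U$. This is read off directly from its defining relation \eqref{NR}: interchanging $X$ and $Y$ on the right-hand side sends $\langle X,[Z,Y]\rangle+\langle Y,[Z,X]\rangle$ to $\langle Y,[Z,X]\rangle+\langle X,[Z,Y]\rangle$, which is the very same expression. As this identity holds for every $Z\in\mathfrak{g}$ and $\langle\cdot,\cdot\rangle$ is nondegenerate, we conclude $U(X,Y)=U(Y,X)$, so $U$ contributes nothing to the skew-symmetric part.

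Combining these two observations, the symmetric part of $\mu$ picks out exactly $U$ and the skew-symmetric part picks out exactly $\frac{1}{2}[\cdot,\cdot]$, which is precisely \eqref{eq:skew-symm}. I do not anticipate any genuine obstacle here: the entire content is the uniqueness of the symmetric/skew splitting of a bilinear map, and the only computation needed is the one-line check of the symmetry of $U$ from \eqref{NR}.
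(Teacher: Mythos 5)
Your argument is correct and matches the paper's (implicit) reasoning exactly: the paper defines $U$ as a symmetric bilinear map via \eqref{NR}, records the Koszul-type identity \eqref{Levi-CivitaRelation}, and the Proposition then follows from the uniqueness of the symmetric/skew-symmetric splitting of $\mu$, just as you argue. Your explicit one-line verification that \eqref{NR} is symmetric under interchange of $X$ and $Y$ is the only computation needed, and it is the same check the paper relies on.
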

\section{Left invariant statistical structures on Lie groups}\label{sec:5}
\subsection{Infinitesimal model}\label{sec:5.1}
Let $G$ be a Lie group with a 
left invariant Riemannia metric $g=\langle\cdot,\cdot\rangle$ 
and a left invariant linear connection 
$\nabla^{\mu}$. Then a pair 
$(\langle\cdot,\cdot\rangle,\nabla^{\mu})$ defines a 
left invariant statistical structure if $\nabla^{\mu}$ is of torsion free 
and the 
covariant derivative $C=\nabla^{\mu}g$ of 
$g=\langle\cdot,\cdot\rangle$ by $\nabla^{\mu}$ is totally symmetric. 

Here we recall that 
$\nabla^{\mu}$ is of torsion free 
if and only if 
\[
\mu(X,Y)-\mu(Y,X)=[X,Y].
\]
Namely, 
the skew symmetric part of $\mu$ is
\[
(\mathrm{skew}\>\mu)(X,Y)=\frac{1}{2}[X,Y].
\] 
Thus $\nabla^{\mu}$ has the form:
\[
\nabla^{\mu}_{X}Y=(\mathrm{sym}\>\mu)(X,Y)+\frac{1}{2}[X,Y].
\]
The covariant derivative $C=\nabla^{\mu}g$ 
is computed as:
\begin{align*}
C(X,Y,Z)=&(\nabla^{\mu}_{X}g)(Y,Z)=
-g(\nabla^{\mu}_XY,Z)-g(Y,\nabla^{\mu}_XZ)
\\
=&-\langle \mu(X,Y),Z\rangle
-\langle Y,\mu(X,Z)\rangle,
\\
C(Y,X,Z)=&(\nabla^{\mu}_{Y}g)(X,Z)=
-g(\nabla^{\mu}_YX,Z)-g(X,\nabla^{\mu}_YZ)
\\
=&-\langle \mu(Y,X),Z\rangle
-\langle X,\mu(Y,Z)\rangle,
\\
C(Y,Z,X)=&(\nabla^{\mu}_{Y}g)(Z,X)=
-g(\nabla^{\mu}_YZ,X)-g(Z,\nabla^{\mu}_YX)
\\
=&-\langle \mu(Y,Z),X\rangle
-\langle Z,\mu(Y,X)\rangle.
\end{align*}
Hence the total symmetry condition of $C$ is
\[
\langle \mu(X,Y),Z\rangle
+\langle Y,\mu(X,Z)\rangle
=
\langle \mu(Y,X),Z\rangle
+\langle X,\mu(Y,Z)\rangle.
\]
This formula is rewritten as 
\[
\langle \mu(X,Y)-\mu(Y,X),Z\rangle
=
\langle X,\mu(Y,Z)\rangle
-\langle Y, \mu(X,Z)\rangle.
\]
Next, we get
\begin{align*}
\langle [X,Y],Z\rangle
=&
\langle X,\mu(Y,Z)\rangle
-\langle Y, \mu(X,Z)\rangle
\\
=&
\langle X,(\mathrm{sym}\>\mu)(Y,Z)
+\frac{1}{2}[Y,Z]\rangle
-\langle Y, (\mathrm{sym}\>\mu)(X,Z)
+\frac{1}{2}[X,Z]
\rangle
\\
=&
\langle X,(\mathrm{sym}\>\mu)(Y,Z)
\rangle
-\langle Y, (\mathrm{sym}\>\mu)(X,Z)
\rangle
+\frac{1}{2}
\langle X,[Y,Z]\rangle
-\frac{1}{2}\langle Y, [X,Z]
\rangle
\\
=&
\langle X,(\mathrm{sym}\>\mu)(Y,Z)
\rangle
-\langle Y, (\mathrm{sym}\>\mu)(X,Z)
\rangle
-\frac{1}{2}
\langle \mathrm{ad}(Z)Y,X\rangle
+\frac{1}{2}\langle \mathrm{ad}(Z)X,Y
\rangle.
\end{align*}
Thus we obtain
\begin{equation}\label{leftinvariant}
\langle U(Y,Z),X\rangle
-\langle U(X,Z),Y\rangle
=\langle (\mathrm{sym}\>\mu)(Y,Z),X
\rangle
-\langle (\mathrm{sym}\>\mu)(X,Z),Y
\rangle.
\end{equation}
When $g$ is bi-invariant, 
the total symmetry condition is 
\[
\langle X,(\mathrm{sym}\>\mu)(Y,Z)
\rangle
=\langle Y, (\mathrm{sym}\>\mu)(X,Z)
\rangle.
\]
Now let us put $\nu:=\mathrm{sym}\>\mu$ then our 
result is stated as:
 
\begin{Proposition}\label{prop:5.1}
On a Lie group $G$ equipped with a 
left invariant statistical structure 
$(g,\nabla)$, 
the linear connection $\nabla$ is determined by 
a bilinear map 
$\nu:\mathfrak{g}\times\mathfrak{g}\to
\mathfrak{g}$ satisfying
\[
\langle U(Y,Z),X\rangle
-\langle U(X,Z),Y\rangle
=\langle \nu(Y,Z),X
\rangle
-\langle \nu(X,Z),Y
\rangle.
\]
The linear connection $\nabla$ is represented as 
\[
\nabla_{X}Y=\nu(X,Y)+\frac{1}{2}[X,Y].
\]
\end{Proposition}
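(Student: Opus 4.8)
The plan is to translate the two defining requirements of a left invariant statistical structure---that $\nabla$ be torsion free and that $C=\nabla g$ be totally symmetric---into purely algebraic conditions on the bilinear map representing $\nabla$. By Proposition \ref{allconnections}, the connection $\nabla$ equals $\nabla^{\mu}$ for a unique $\mu\in\mathcal{B}_{\mathfrak g}$, with torsion $T^{\mu}(X,Y)=-[X,Y]+\mu(X,Y)-\mu(Y,X)$. First I would impose $T^{\mu}=0$; this forces $(\sk\mu)(X,Y)=\tfrac{1}{2}[X,Y]$, so that $\mu=\nu+\tfrac{1}{2}[\cdot,\cdot]$ with $\nu:=\sym\mu$ the only free datum. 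This already yields the asserted representation $\nabla_{X}Y=\nu(X,Y)+\tfrac{1}{2}[X,Y]$, and it identifies the bilinear map of the statement as the symmetric part of $\mu$.

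Next I would compute $C=\nabla^{\mu}g$ on left invariant vector fields. Since $g=\langle\cdot,\cdot\rangle$ is left invariant, $\langle Y,Z\rangle$ is constant along $G$ for left invariant $Y,Z$, so the derivative term in $(\nabla^{\mu}_{X}g)(Y,Z)$ drops out and $C(X,Y,Z)=-\langle\mu(X,Y),Z\rangle-\langle Y,\mu(X,Z)\rangle$. This $C$ is automatically symmetric in its last two slots, being the covariant derivative of the symmetric tensor $g$, so total symmetry reduces to symmetry in the first two, i.e.\ $C(X,Y,Z)=C(Y,X,Z)$. Writing out both sides and rearranging yields
\[
\langle\mu(X,Y)-\mu(Y,X),Z\rangle=\langle X,\mu(Y,Z)\rangle-\langle Y,\mu(X,Z)\rangle.
\]

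Finally I would simplify this identity. On the left I would use torsion-freeness to replace $\mu(X,Y)-\mu(Y,X)$ by $[X,Y]$; on the right I would split $\mu=\nu+\tfrac{1}{2}[\cdot,\cdot]$ and rewrite the resulting bracket terms through $\mathrm{ad}$. Comparing with the defining relation \eqref{NR} for $U$, expanded via its ad-representation, shows that the half-bracket contributions recombine exactly into $U$-terms, delivering \eqref{leftinvariant}, namely
\[
\langle U(Y,Z),X\rangle-\langle U(X,Z),Y\rangle=\langle\nu(Y,Z),X\rangle-\langle\nu(X,Z),Y\rangle,
\]
which is the claimed condition. The main obstacle is purely bookkeeping: keeping the signs straight when passing between $[\cdot,\cdot]$ and $\mathrm{ad}(\cdot)$, and verifying that the skew contributions of $\mu$ are precisely what is needed to convert the bare brackets into the symmetric object $U$ defined by \eqref{NR}; the symmetry of the real inner product is what allows $\langle X,\nu(Y,Z)\rangle$ to be read as $\langle\nu(Y,Z),X\rangle$ in the last step.
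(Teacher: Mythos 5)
Your proposal is correct and follows essentially the same route as the paper: torsion-freeness pins down $\sk\mu=\frac{1}{2}[\cdot,\cdot]$ via Proposition \ref{allconnections}, the left invariance of $g$ reduces $C=\nabla^{\mu}g$ to the purely algebraic expression $C(X,Y,Z)=-\langle\mu(X,Y),Z\rangle-\langle Y,\mu(X,Z)\rangle$, and total symmetry (which, as you correctly note, reduces to symmetry in the first two slots) is converted through the splitting $\mu=\nu+\frac{1}{2}[\cdot,\cdot]$ and the defining relation \eqref{NR} into \eqref{leftinvariant}. The only cosmetic difference is that the paper writes out $C(X,Y,Z)$, $C(Y,X,Z)$ and $C(Y,Z,X)$ explicitly before imposing the single symmetry condition, whereas you shortcut this with the observation that $(\nabla_X g)$ is automatically symmetric in its last two arguments.
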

The skewness operator $K$ is computed as
\[
-\frac{1}{2}K(X)Y=\nu(X,Y)-U(X,Y).
\]
The total symmetry of $C$ is rewritten as
\[
\langle K(X)Z,Y\rangle
=\langle K(Y)Z,X\rangle.
\]
Henceforth we obtain the following results:
\begin{Corollary}
On a Lie group $G$ equipped with a 
left invariant statistical structure 
$(g,\nabla)$, 
the connection $\nabla$ is determined by 
a bilinear map 
\[
K:\mathfrak{g}\times\mathfrak{g}\to
\mathfrak{g};\ \ (X,Y)\longmapsto K(X)Y
\]
satisfying
\begin{equation}
K(X)Y=K(Y)X,\ \ 
\langle K(X)Z,Y\rangle
=\langle K(Y)Z,X\rangle, 
 \ \ 
\langle K(X)Y,Z\rangle
=\langle Y,K(X)Z\rangle. 
\end{equation}
The connection is represented as 
\[
\nabla_{X}Y=U(X,Y)-\frac{1}{2}K(X)Y+\frac{1}{2}[X,Y].
\]
\end{Corollary}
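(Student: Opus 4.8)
The plan is to read the Corollary off from Proposition~\ref{prop:5.1} together with the elementary properties of the skewness operator already recorded in Section~\ref{sec:1.1}, since its entire content is the translation of the datum $\nu=\mathrm{sym}\,\mu$ into the language of the skewness operator $K$. I would organize the argument as a forward direction (extracting $K$ from a given structure), a representation formula, and a converse (reconstructing the structure from $K$).

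First I would treat the forward direction. For any statistical manifold the skewness operator is defined by $g(K(X)Y,Z)=C(X,Y,Z)=(\nabla_X g)(Y,Z)$, and, as noted in Section~\ref{sec:1.1}, the total symmetry of the cubic form $C$ yields at once $K(X)Y=K(Y)X$ together with the self-adjointness $\langle K(X)Y,Z\rangle=\langle Y,K(X)Z\rangle$. The remaining identity $\langle K(X)Z,Y\rangle=\langle K(Y)Z,X\rangle$ is simply the instance of total symmetry that interchanges the two outer slots of $C$. Since $g$, $\nabla$, and hence $K$ are all left invariant, evaluating at $e$ turns $K$ into a bilinear map $\mathfrak{g}\times\mathfrak{g}\to\mathfrak{g}$ obeying exactly the three displayed identities.

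Next I would derive the representation formula. Combining $\nabla=\nabla^g-\frac{1}{2}K$ from Section~\ref{sec:1.1} with the Koszul-type identity \eqref{Levi-CivitaRelation}, namely $\nabla^g_X Y=\frac{1}{2}[X,Y]+U(X,Y)$, gives immediately
\[
\nabla_X Y=U(X,Y)-\frac{1}{2}K(X)Y+\frac{1}{2}[X,Y].
\]
Equivalently this follows from Proposition~\ref{prop:5.1} by inserting $\nu(X,Y)=U(X,Y)-\frac{1}{2}K(X)Y$ into $\nabla_X Y=\nu(X,Y)+\frac{1}{2}[X,Y]$. For the converse, given any bilinear $K$ satisfying the three conditions I would set $C(X,Y,Z):=\langle K(X)Y,Z\rangle$; the symmetry $K(X)Y=K(Y)X$ and the self-adjointness together generate the full $S_3$-symmetry, so $C$ is a genuine cubic form, and extending $g$ and $C$ left invariantly produces, through the correspondence $(g,C)\leftrightarrow(g,\nabla)$ of Section~\ref{sec:1.1}, a left invariant statistical structure whose connection is precisely the one displayed.

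The only step requiring genuine verification, and hence the main (and rather minor) obstacle, is checking the compatibility of the condition on $\nu$ in Proposition~\ref{prop:5.1} with the condition $\langle K(X)Z,Y\rangle=\langle K(Y)Z,X\rangle$: substituting $\nu=U-\frac{1}{2}K$ into the $\nu$-identity, the two $U$-terms cancel against those already present and one is left exactly with this symmetry of $K$. This is the single place where the left-invariant data $U$ interact nontrivially with the conditions; everything else is bookkeeping within the already-established correspondence.
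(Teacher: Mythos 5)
Your proposal is correct and follows essentially the same route as the paper: the paper also obtains the Corollary from Proposition~5.1 by setting $-\tfrac{1}{2}K(X)Y=\nu(X,Y)-U(X,Y)$, noting that the $U$-terms cancel so that the total symmetry of $C$ becomes $\langle K(X)Z,Y\rangle=\langle K(Y)Z,X\rangle$, and reading off $\nabla_XY=U(X,Y)-\tfrac{1}{2}K(X)Y+\tfrac{1}{2}[X,Y]$ from $\nabla_XY=\nu(X,Y)+\tfrac{1}{2}[X,Y]$. Your added observations --- that symmetry plus self-adjointness already generate the full $S_3$-symmetry of $C$ (making the third displayed condition redundant), and the explicit converse via the $(g,C)\leftrightarrow(g,\nabla)$ correspondence of Section~1.1 --- are correct elaborations of what the paper leaves implicit.
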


\begin{Corollary}\label{cor:5.3}
Every bi-invariant statistical structure on $G$ is 
represented by a bi-invariant metric 
$g=\langle\cdot,\cdot\rangle$ and an 
$\mathrm{Ad}(G)$-invariant bilinear map 
$K:\mathfrak{g}\times\mathfrak{g}\to\mathfrak{g}$ satisfying
\[
K(X)Y=K(Y)X,\ \ 
\langle K(X)Z,Y\rangle
=\langle K(Y)Z,X\rangle.
\]
The bi-invariant linear connection $\nabla$ is expressed as
\[
\nabla_XY=-\frac{1}{2}K(X)Y+\frac{1}{2}[X,Y].
\]
\end{Corollary}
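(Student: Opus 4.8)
The plan is to obtain Corollary \ref{cor:5.3} as a specialization of the preceding Corollary to the bi-invariant case, so that essentially two reductions must be carried out: the vanishing of the symmetric map $U$, and the reformulation of bi-invariance of $\nabla$ as $\mathrm{Ad}(G)$-invariance of $K$. First I would recall from Section \ref{sec:4} that a left invariant metric $g=\langle\cdot,\cdot\rangle$ is right invariant, hence bi-invariant, if and only if $U=0$. Substituting $U=0$ into the representation $\nabla_X Y=U(X,Y)-\frac{1}{2}K(X)Y+\frac{1}{2}[X,Y]$ supplied by the preceding Corollary immediately produces the asserted expression $\nabla_X Y=-\frac{1}{2}K(X)Y+\frac{1}{2}[X,Y]$, while the symmetry conditions $K(X)Y=K(Y)X$ and $\langle K(X)Z,Y\rangle=\langle K(Y)Z,X\rangle$ carry over verbatim.

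Next I would treat the bi-invariance of $\nabla=\nabla^{\mu}$ with $\mu(X,Y)=-\frac{1}{2}K(X)Y+\frac{1}{2}[X,Y]$. By the description of $\mathcal{B}^{\mathsf{bi}}_{\mathfrak g}$ in Section \ref{sec:3.1}, $\nabla^{\mu}$ is bi-invariant precisely when $\mu$ is $\mathrm{Ad}(G)$-invariant, i.e. $\mu(\mathrm{Ad}(g)X,\mathrm{Ad}(g)Y)=\mathrm{Ad}(g)\mu(X,Y)$ for all $g\in G$. Since the bracket is always equivariant, $\mathrm{Ad}(g)[X,Y]=[\mathrm{Ad}(g)X,\mathrm{Ad}(g)Y]$, the invariance of $\mu$ is equivalent to that of the skewness term alone, namely $K(\mathrm{Ad}(g)X)\mathrm{Ad}(g)Y=\mathrm{Ad}(g)\bigl(K(X)Y\bigr)$; this is exactly the stated $\mathrm{Ad}(G)$-invariance of $K$. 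This settles one direction. For the converse I would start from a bi-invariant metric (so $U=0$) together with an $\mathrm{Ad}(G)$-invariant bilinear map $K$ satisfying the two symmetry relations, define $\mu$ by the same formula, and invoke the preceding Corollary to conclude that $(g,\nabla^{\mu})$ is a left invariant statistical structure; the two equivalences just used then show that $g$ and $\nabla^{\mu}$ are both bi-invariant.

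I do not expect a genuine obstacle, since the result is in essence a bookkeeping specialization of the preceding Corollary; the only point deserving some care is to verify that the self-adjointness relation $\langle K(X)Y,Z\rangle=\langle Y,K(X)Z\rangle$, listed among the three conditions there, is a formal consequence of $K(X)Y=K(Y)X$ and $\langle K(X)Z,Y\rangle=\langle K(Y)Z,X\rangle$, so that recording only the latter two in the statement loses no information. Indeed, applying the first relation, then the second, and then the first again transforms $\langle K(X)Y,Z\rangle$ successively into $\langle K(Y)X,Z\rangle$, $\langle K(Z)X,Y\rangle$ and $\langle K(X)Z,Y\rangle$, which is precisely the self-adjointness of $K(X)$. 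Hence the two displayed conditions, together with $\mathrm{Ad}(G)$-invariance, fully capture the bi-invariant statistical structures, as claimed.
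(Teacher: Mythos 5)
Your proposal is correct and follows essentially the same route as the paper, which obtains Corollary \ref{cor:5.3} exactly as a specialization of the preceding Corollary: $U=0$ by the bi-invariance of $g$ (Section \ref{sec:4}), and bi-invariance of $\nabla^{\mu}$ equivalent to $\mathrm{Ad}(G)$-invariance of $\mu$, hence of $K$, by the parametrization of $\mathcal{B}^{\mathsf{bi}}_{\mathfrak g}$ in Section \ref{sec:3.1}. Your closing verification that the self-adjointness relation $\langle K(X)Y,Z\rangle=\langle Y,K(X)Z\rangle$ is a formal consequence of the two displayed conditions is also correct and explains why the paper records only those two.
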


To close this subsection we give a reformulation of 
statistical Lie groups in terms of cubic form.

Let $C$ be a symmetric trilinear map on $\mathfrak{g}$. 
By left translations, we can extend $C$ to a left 
invariant cubic form on $G$.
Take a left invariant Riemannian metric $g
=\langle\cdot,\cdot\rangle$ on $G$. 
Then we have a left invariant statistical structure 
$(g,\nabla)$ by
\[
\nabla_{X}Y:=\nabla^{g}_{X}Y-\frac{1}{2}K(X)Y=U(X,Y)-\frac{1}{2}
K(X)Y+\frac{1}{2}[X,Y],
\quad 
X,Y\in\mathfrak{g}.
\]
Here $K$ is defined by 
\[
\langle K(X)Y,Z\rangle=C(X,Y,Z),
\quad 
X,Y,Z\in\mathfrak{g}.
\]

\begin{Corollary}
On a Lie group $G$ equipped with a 
left invariant Riemannian metric statistical structure 
$(g,\nabla)$, 
the connection $\nabla$ is determined by 
a symmetric trilinear form
\[
C:\mathfrak{g}\times\mathfrak{g}\times\mathfrak{g}\to
\mathbb{R};\quad (X,Y,Z)\longmapsto C(X,Y,Z).
\]
The left invariant connection $\nabla$ is represented as 
\[
\nabla_{X}Y=U(X,Y)-\frac{1}{2}K(X)Y+\frac{1}{2}[X,Y],
\]
where the bilinear map $K:\mathfrak{g}\times\mathfrak{g}
\to\mathfrak{g}$ is determined by 
\[
\langle K(X)Y,Z\rangle=C(X,Y,Z),
\quad
X,Y,Z\in\mathfrak{g}.
\] 
\end{Corollary}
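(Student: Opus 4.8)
The plan is to recognize this statement as the left-invariant shadow of the general dictionary between cubic forms and statistical structures set up in Section~\ref{sec:1.1}, and then to read off the explicit formula for $\nabla$ from the Koszul-type identity \eqref{Levi-CivitaRelation}. Concretely I would establish a bijection between left invariant statistical structures $(g,\nabla)$ (with $g$ fixed) and symmetric trilinear forms $C$ on $\mathfrak{g}$, and verify that the inverse map is given by the displayed formula.

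First I would treat the forward direction. Given a left invariant statistical structure $(g,\nabla)$, Section~\ref{sec:1.1} supplies the skewness operator $K$ and the totally symmetric cubic form $C$ characterized by $\langle K(X)Y,Z\rangle = C(X,Y,Z) = (\nabla_X g)(Y,Z)$, together with $\nabla = \nabla^g - \tfrac{1}{2}K$. Since $g$ is left invariant, its Levi-Civita connection $\nabla^g$ is left invariant; hence $K = 2(\nabla^g - \nabla)$ and $C$ are left invariant tensor fields. A left invariant tensor field is pinned down by its value at the unit element $e$, so $C$ is determined by the symmetric trilinear form $C|_{e}$ on $\mathfrak{g}$, which I keep denoting $C$. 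The explicit representation is then immediate: substituting $\nabla^g_X Y = \tfrac{1}{2}[X,Y] + U(X,Y)$ from \eqref{Levi-CivitaRelation} into $\nabla = \nabla^g - \tfrac{1}{2}K$ gives, for $X,Y\in\mathfrak{g}$,
\[
\nabla_X Y = U(X,Y) - \frac{1}{2}K(X)Y + \frac{1}{2}[X,Y].
\]

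For the converse I would begin with an arbitrary symmetric trilinear form $C$ on $\mathfrak{g}$, extend it by left translation to a left invariant cubic form on $G$, define $K$ by $\langle K(X)Y,Z\rangle = C(X,Y,Z)$, and set $\nabla_X Y = U(X,Y) - \tfrac{1}{2}K(X)Y + \tfrac{1}{2}[X,Y]$. Total symmetry of $C$ makes each $K(X)$ self-adjoint and forces $K(X)Y = K(Y)X$, while left invariance of $g$, $U$ and $K$ makes $\nabla$ left invariant. That $(g,\nabla)$ is a bona fide statistical structure is exactly the general converse of Section~\ref{sec:1.1} applied to $(G,g,C)$; alternatively, torsion-freeness follows directly from Proposition~\ref{allconnections}, since the symmetric summands $U(X,Y)$ and $-\tfrac{1}{2}K(X)Y$ contribute nothing to the skew-symmetric part and leave $(\sk\mu)(X,Y) = \tfrac{1}{2}[X,Y]$. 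These two assignments are mutually inverse, which is what it means for $\nabla$ to be determined by $C$.

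I do not expect a genuine obstacle here, as the corollary merely repackages Section~\ref{sec:1.1} in the left invariant category. The one point deserving explicit mention is the left invariance of $\nabla^g$ and hence of $K$ and $C$: this is what guarantees that every object in sight is recovered from its restriction to $\mathfrak{g} = T_e G$, so that the global statistical structure is encoded by the finite-dimensional datum $C$.
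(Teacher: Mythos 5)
Your proposal is correct and takes essentially the same route as the paper: the paper likewise extends a symmetric trilinear form $C$ on $\mathfrak{g}$ by left translation to a left invariant cubic form, invokes the dictionary of Section~\ref{sec:1.1} to set $\nabla=\nabla^{g}-\tfrac{1}{2}K$ with $\langle K(X)Y,Z\rangle=C(X,Y,Z)$, and substitutes the Koszul-type identity \eqref{Levi-CivitaRelation} to obtain $\nabla_{X}Y=U(X,Y)-\tfrac{1}{2}K(X)Y+\tfrac{1}{2}[X,Y]$. Your additional checks (left invariance of $\nabla^{g}$, $K$, $C$, and torsion-freeness via Proposition~\ref{allconnections}) merely make explicit what the paper leaves implicit.
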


\begin{Corollary}\label{cor:5.5}
Every bi-invariant statistical structure on $G$ is 
represented by a bi-invariant metric 
$g=\langle\cdot,\cdot\rangle$ and an 
$\mathrm{Ad}(G)$-invariant symmetric trilinear form  
$C$ on $\mathfrak{g}$.
The bi-invariant connection is expressed as
\[
\nabla_XY=-\frac{1}{2}K(X)Y+\frac{1}{2}[X,Y].
\]
\end{Corollary}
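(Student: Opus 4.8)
The plan is to derive Corollary~\ref{cor:5.5} from Corollary~\ref{cor:5.3} by translating the condition on the skewness operator $K$ into an equivalent condition on the cubic form $C$. First I would note that since $(g,\nabla)$ is bi-invariant, in particular $g$ is a bi-invariant metric, so the symmetric bilinear map $U$ of Section~\ref{sec:4} vanishes identically (recall that a left-invariant metric is right-invariant precisely when $U=0$). Substituting $U=0$ into the representation $\nabla_XY=U(X,Y)-\tfrac12 K(X)Y+\tfrac12[X,Y]$ established above for general left-invariant statistical structures immediately yields the claimed formula $\nabla_XY=-\tfrac12 K(X)Y+\tfrac12[X,Y]$.

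Next I would invoke Corollary~\ref{cor:5.3}, which already characterizes bi-invariant statistical structures by a bi-invariant metric together with an $\mathrm{Ad}(G)$-invariant skewness operator $K$ obeying the required symmetry relations. Thus the only remaining task is to check that, under bi-invariance of $g$, the $\mathrm{Ad}(G)$-invariance of $K$ is equivalent to the $\mathrm{Ad}(G)$-invariance of the associated cubic form $C(X,Y,Z)=\langle K(X)Y,Z\rangle$.

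For this I would use the standard fact that a left-invariant metric $g$ is bi-invariant if and only if the inner product $\langle\cdot,\cdot\rangle$ is $\mathrm{Ad}(G)$-invariant, i.e.\ $\langle\mathrm{Ad}(a)X,\mathrm{Ad}(a)Y\rangle=\langle X,Y\rangle$ for all $a\in G$. Then for any $a\in G$ I compute
\[
C(\mathrm{Ad}(a)X,\mathrm{Ad}(a)Y,\mathrm{Ad}(a)Z)
=\langle K(\mathrm{Ad}(a)X)\mathrm{Ad}(a)Y,\ \mathrm{Ad}(a)Z\rangle.
\]
If $K$ is $\mathrm{Ad}(G)$-invariant, the right-hand side equals $\langle \mathrm{Ad}(a)(K(X)Y),\mathrm{Ad}(a)Z\rangle=\langle K(X)Y,Z\rangle=C(X,Y,Z)$, so $C$ is $\mathrm{Ad}(G)$-invariant. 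Conversely, since $\langle\cdot,\cdot\rangle$ is nondegenerate and $\mathrm{Ad}(G)$-invariant, the $\mathrm{Ad}(G)$-invariance of $C$ forces $K(\mathrm{Ad}(a)X)\mathrm{Ad}(a)Y=\mathrm{Ad}(a)(K(X)Y)$, i.e.\ $K$ is $\mathrm{Ad}(G)$-invariant. Combining this equivalence with the bijection of Corollary~\ref{cor:5.3} establishes the stated correspondence in both directions.

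I do not anticipate a serious obstacle, as the statement is essentially a reformulation of Corollary~\ref{cor:5.3}; the only point requiring care is keeping track of which invariance is being used at each step, and ensuring that the passage between $K$ and $C$ genuinely exploits the $\mathrm{Ad}(G)$-invariance of the bi-invariant inner product rather than merely its left-invariance.
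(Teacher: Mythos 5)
Your proposal is correct and follows essentially the same route as the paper, which states Corollary~\ref{cor:5.5} as an immediate consequence of Corollary~\ref{cor:5.3} together with the dictionary $C(X,Y,Z)=\langle K(X)Y,Z\rangle$ set up just before Corollary~5.4; your added details (that bi-invariance of $g$ gives $U=0$ and that $\mathrm{Ad}(G)$-invariance of $K$ is equivalent to that of $C$ via the nondegenerate $\mathrm{Ad}(G)$-invariant inner product) are exactly the steps the paper leaves implicit.
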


\subsection{Left invariant Codazzi tensor fields}\label{sec:5.2}
Let $G$ be a connected Lie group equipped with 
a left invariant Riemannian metric. Take a 
left invariant Codazzi tensor field $h$. 
Denote by $h^{\sharp}$ the endomorphism field 
metrically dual to $h$, \textit{i.e.},
\[
h(X,Y)=g( h^{\sharp} X,Y),\quad X,Y\in\mathfrak{X}(M).
\]
Then $h^{\sharp}$ is self-adjoint with respect to $g$. 
The eigenvalues of $h^{\sharp}$ are constant and their 
multiplicities are also constant. 
Thus it suffices to investigate $h^{\sharp}$ at the identity $e\in G$: 
Note that the
eigenspaces of $h^{\sharp}$ are Lie subalgebras of $\mathfrak{g}$.
 D'Atri introduced the 
following notion:
\begin{Definition}
{\rm A left invariant Codazzi tensor 
field $h$ is said to be \emph{essential} if 
$\nabla^{g}h\not=0$ and none of the eigenspace 
is an ideal of $\mathfrak{g}$. 
}
\end{Definition}
Under this definition, D'Atri proved the 
following proposition.
\begin{Proposition}[\cite{DAtri}]
If a connected Lie group $G$ equipped with a 
left invariant Riemannian metric admits an essential 
left invariant Codazzi tensor field, then 
$G$ can not be nilpotent or split-solvable over $\mathbb{R}$.
\end{Proposition}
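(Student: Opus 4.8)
The plan is to carry out everything at the level of the Lie algebra $\mathfrak g$, identifying the left invariant Codazzi tensor $h$ with its self-adjoint endomorphism $S=h^{\sharp}$ and converting the Codazzi equation into an algebraic condition relating $S$, the bracket, and the symmetric map $U$ of \eqref{NR}. Since $h$ is left invariant, for left invariant fields one has $(\nabla^{g}_{X}h)(Y,Z)=\langle(\nabla^{g}_{X}S)Y,Z\rangle$, where by \eqref{Levi-CivitaRelation} $\nabla^{g}_{X}Y=\tfrac12[X,Y]+U(X,Y)$. The operator $\nabla^{g}_{X}S$ is again self-adjoint (because $\nabla^{g}$ is metric and $S$ is self-adjoint), so the trilinear form $T(X,Y,Z):=\langle(\nabla^{g}_{X}S)Y,Z\rangle$ is automatically symmetric in $Y,Z$; the Codazzi equation says exactly that $T$ is symmetric in $X,Y$ too. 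Hence the first step is the observation that the Codazzi condition is equivalent to $T$ being a \emph{totally symmetric} cubic form, and that $\nabla^{g}h\neq0$ is equivalent to $T\neq0$.

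First I would recall that the eigenspaces $V_{\lambda}$ of $S$ are orthogonal subalgebras of $\mathfrak g$, and dispose of the degenerate case: if $S$ has a single eigenvalue it is a scalar, so $T=0$ and $h$ is not essential. Thus I may assume $S$ has at least two eigenvalues. Evaluating $T$ on eigenvectors $X\in V_{\lambda}$, $Y\in V_{\mu}$, $Z\in V_{\nu}$ and using $(\mu\,\mathrm{Id}-S)Z=(\mu-\nu)Z$ yields the compact formula $T(X,Y,Z)=(\mu-\nu)\bigl(\tfrac12\langle[X,Y],Z\rangle+\langle U(X,Y),Z\rangle\bigr)$. Writing the three cyclic structure constants $c_{1}=\langle[X,Y],Z\rangle$, $c_{2}=\langle[Z,X],Y\rangle$, $c_{3}=\langle[Y,Z],X\rangle$ and expressing $U$ through them by \eqref{NR}, the total symmetry of $T$ translates into a family of linear relations among $c_{1},c_{2},c_{3}$ weighted by the eigenvalue differences $\mu-\nu,\ \lambda-\mu,\ \nu-\lambda$.

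The decisive consequence I would extract is the \emph{coincident-eigenvalue case} $\mu=\nu\neq\lambda$. There $\langle[Y,Z],X\rangle=0$ holds automatically since $[Y,Z]\in V_{\mu}\perp V_{\lambda}$, and the relations then force $\langle[X,Y],Z\rangle+\langle[X,Z],Y\rangle=0$ for $X\in V_{\lambda}$ and $Y,Z\in V_{\mu}$. In other words, the diagonal block $P_{\mu}\,\mathrm{ad}(X)|_{V_{\mu}}$ is skew-symmetric whenever $X$ lies in a \emph{different} eigenspace. This is the key lever, because a real skew-symmetric operator has purely imaginary spectrum, whereas nilpotency makes every $\mathrm{ad}(X)$ nilpotent and split-solvability makes every $\mathrm{ad}(X)$ real-triangularizable; in either situation a block that is simultaneously skew and compatible with the relevant triangular structure is forced to vanish.

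Finally I would combine this with the fine structure of $\mathfrak g$. Both hypotheses supply a complete flag of ideals (the descending central series in the nilpotent case, Lie's theorem over $\mathbb R$ in the split-solvable case), hence a simultaneous real triangularization of all $\mathrm{ad}(X)$. Feeding the vanishing of the skew diagonal blocks, together with the remaining three-distinct-eigenvalue relations, up this flag should show that the brackets cannot mix distinct eigenspaces at the extreme eigenvalue; concretely, the eigenspace $V_{\lambda_{0}}$ of the largest (or smallest) eigenvalue is carried into itself by every $\mathrm{ad}(W)$, i.e.\ it is an ideal, contradicting the essentiality of $h$. The contrapositive is the proposition. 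The hard part will be exactly this last propagation: the coincident-eigenvalue identity controls only diagonal blocks, so one must thread it carefully through the flag (handling the coupling that $U$ introduces between different eigenspaces) to annihilate the off-diagonal brackets, and one must rule out the borderline eigenvalue resonances at which the generic linear relations degenerate without forcing the structure constants to zero -- this is precisely where nilpotency and split-solvability are used in an essential way.
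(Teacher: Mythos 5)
Your reduction is correct as far as it goes, and in fact it reproduces the infrastructure of the cited argument (the paper itself gives no proof, deferring entirely to D'Atri \cite{DAtri}): the equivalence of the Codazzi equation with total symmetry of the cubic form $T(X,Y,Z)=\langle(\nabla^{g}_{X}S)Y,Z\rangle$ is right, your eigenvector formula checks out, and with $X\in V_{\lambda}$, $Y\in V_{\mu}$, $Z\in V_{\nu}$ and $a=\langle[X,Y],Z\rangle$, $b=\langle[Z,X],Y\rangle$, $c=\langle[Y,Z],X\rangle$ total symmetry is exactly the chain
\[
(\mu-\nu)(a+b-c)\;=\;(\lambda-\nu)(-a+b-c)\;=\;(\mu-\lambda)(a-b-c),
\]
from which your coincident-eigenvalue conclusion ($c=0$ and $a=b$, i.e.\ skewness of $P_{\mu}\,\mathrm{ad}(X)|_{V_{\mu}}$ for $X\in V_{\lambda}$, $\lambda\neq\mu$) follows correctly. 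Up to this point the proposal is a sound and verifiable lemma.

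The genuine gap is the final step, which you flag as ``the hard part'' but whose guiding heuristic is unsound as stated. The operator $P_{\mu}\,\mathrm{ad}(X)|_{V_{\mu}}$ is a \emph{compression} of $\mathrm{ad}(X)$ to a subspace that is in general neither $\mathrm{ad}(X)$-invariant nor adapted to the complete flag of ideals coming from the central series or from Lie's theorem; compressions of nilpotent or real-triangularizable operators can have arbitrary real spectrum, so ``skew $+$ nilpotent/triangular $\Rightarrow 0$'' does not apply blockwise. The orthogonal eigenspace decomposition of $S$ and the flag of ideals are in general position, and reconciling them is precisely where nilpotency and split-solvability must enter; your sketch asserts rather than proves that the brackets ``cannot mix distinct eigenspaces at the extreme eigenvalue,'' which is just the contrapositive of the proposition restated. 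Moreover the degenerate resonances of the displayed linear system (e.g.\ when $\lambda+\mu=2\nu$, where the relations among $a,b,c$ drop rank) are exactly the cases you would need to exclude by structure theory and are left untreated. So the proposal establishes a correct and useful first lemma, matching the opening moves of D'Atri's proof, but the theorem itself — showing that either $\nabla^{g}h=0$ or some eigenspace is an ideal — is not proved by the argument as proposed.
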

In \cite{DAtri}, D'Atri  constructed essential left invariant 
Codazzi tensor fields on the special unitary group $\mathrm{SU}_2$ equipped with 
a left invariant Riemannian metric. 
Some results of \cite{DAtri} are generalized to 
invariant Codazzi tensor fields on reductive homogeneous Riemannian spaces 
\cite{MarTe}.

\section{Two dimensional Lie groups}\label{sec:6}
In this section we give some explicit examples of $2$-dimensional 
statistical Lie groups.
\subsection{Two-dimensional Lie algebras}\label{sec:6.1}
It is known that every $2$-dimensional non-abelian 
Lie algebra is isomorphic to 
\[
\mathfrak{g}=\left\{
\left(
\begin{array}{cc}
v & u\\
0 & 0
\end{array}
\right)
\ 
\biggr \vert \ u,v\in
\mathbb{R}
\right\}.
\]
The Lie algebra $\mathfrak{g}$ is generated by 
\[
E_1=\left(
\begin{array}{cc}
0 & 1\\
0 & 0
\end{array}
\right) \ \ \mbox{and}\ \ 
E_2=\left(
\begin{array}{cc}
1 & 0\\
0 & 0
\end{array}
\right)
\] 
with commutation relation 
$[E_1,E_2]=-E_1$.
The simply connected and connected Lie group $G$ 
corresponding to $\mathfrak{g}$ is 
\begin{equation}\label{eq:2DG}
G=\left\{
\left(
\begin{array}{cc}
y & x\\
0 & 1
\end{array}
\right)\>\biggr|\>x,y\in
\mathbb{R},\ \ y>0
\right\}.
\end{equation}
The multiplication of $G$ is given by 
\[
\left(
\begin{array}{cc}
y_1 & x_1\\
0 & 1
\end{array}
\right)
\left(
\begin{array}{cc}
y_2 & x_2\\
0 & 1
\end{array}
\right)
=
\left(
\begin{array}{cc}
y_1y_2& x+y_1x_2\\
0 & 1
\end{array}
\right).
\]
Note that $G$ is the Lie group $\mathrm{GA}^{+}_1$ of orientation 
preserving affine transformations of the real line $\mathbb{R}$. 
We denote by the left invariant vector fields 
determined by $e_1$ and $e_2$ by the same letter. One can see that
 \[
E_1= y\frac{\partial}{\partial x},\quad 
E_2=y\frac{\partial}{\partial y}.
 \]
Take a pair $\{\nu_1,\nu_2\}$ of 
positive constants and  we put 
\[
\{e_1:=\nu_{1}^{-1}E_1,\quad  e_2=\nu_{2}^{-1}E_2\}.
\]
Note that $[e_1,e_2]=-\nu_{2}^{-1}e_1$.

We introduce a left-invariant Riemannian metric 
$g$ so that $\{e_1,e_2\}$ is orthonormal 
with respect to it.
Then the induced left-invariant metric $g=g_{\nu_1,\nu_2}$ is 
\begin{equation}\label{eq:Met}
g=\frac{\nu_1^2dx^2+\nu_2^2dy^2}{y^2}.
\end{equation}
Hereafter we denote by $G(\nu_1,\nu_2)$ the solvable Lie 
group $G$ given by \eqref{eq:2DG} equipped with 
the left invariant metric \eqref{eq:Met}.

The symmetric bilinear map $U$ of $G(\nu_1,\nu_2)$ 
defined by \eqref{NR} is computed as
$2\langle U(e_1,e_1),e_1\rangle =0$ and 
\[
 2\langle U(e_1,e_1),e_2\rangle =
\langle e_1, [e_2, e_1]\rangle +\langle e_1, [e_2, e_1]\rangle =
2\nu_{2}^{-1}.
\]
Thus $U(e_1,e_1)=\nu_{2}^{-1}e_2$.
Similarly 
we have
$U(e_1,e_2)=-\nu_{2}^{-1}e_1/2$ 
and $U(e_2,e_2)=0$.
 The Levi-Civita connection is 
described as
\[
\nabla^{g}_{e_1}e_{1}=\frac{1}{\>\nu_2}e_2,
\quad
\nabla^{g}_{e_1}e_{2}=-\frac{1}{\>\nu_2}e_{1},
\quad
\nabla^{g}_{e_2}e_{1}=0,
\quad
\nabla^{g}_{e_2}e_{2}=0.
\]
\begin{Example}[Hyperbolic plane]{\rm 
When $\nu_1=\nu_2=c>0$, the 
resulting Riemannian manifold 
$G(c,c)$ is the hyperbolic plane of curvature $-1/c^2$. 
}
\end{Example}

 \subsection{One parameter subgroups}\label{sec:6.2}
Here we investigate one parameter subgroups 
of $G(\nu_1,\nu_2)$.

Let us consider the one parameter subgroup 
\[
a(t)=\left(
\begin{array}{cc}
1 & t\\
0 & 1
\end{array}
\right).
\]
Then $a(t)$ acts on 
$G(\nu_1,\nu_2)$ as a translation in $x$-direction:
\[
a(t)\cdot(x,y)=(x+t,y).
\]
On the other hand, 
\[
b(t)=\left(
\begin{array}{cc}
t & 0\\
0 & 1
\end{array}
\right)
\]
acts on $G(\nu_1,\nu_2)$ by 
\[
b(t)\cdot (x,y)=(tx,ty).
\]
We compare these actions with usual 
linear fractional transformations on 
the upper half plane.
Let us introduce the complex coordinate $z=x+iy$ on $G(\nu_1,\nu_2)$ and 
identify $G$ with 
the upper half plane 
\[
\mathbb{H}_{+}=\{z=x+yi\>|x,y\in\mathbb{R},\>\>y>0\}.
\] 
Then the linear fractional transformation $\mathrm{T}_{a(t)}$ by 
$a(t)$ on $\mathbb{H}_{+}$ is given by
\[
\mathrm{T}_{a(t)}(z)=\frac{1\cdot z+t}{0\cdot z+1}=z+t.
\] 
On the other hand we have 
\[
\mathrm{T}_{b(t)}(z)=\frac{t\cdot z+0}{0\cdot z+1}=tz.
\] 

\subsection{The statistical manifold of normal distributions}\label{sec:6.3}
The probability density function of a normal distribution 
$N(\mu,\sigma)$ of mean $\mu$ and variance $\sigma^2$ is 
\[
p(x;\mu,\sigma)=
\frac{1}{\sqrt{2\pi\sigma^2}}\exp
\left(
-\frac{(x-\mu)^2}{2\sigma^2}
\right).
\]
Thus the set $\{N(\mu,\sigma)\}$ of all normal distributions is identified with 
the upper half plane 
$\{(\mu,\sigma)\>|\>\mu,\sigma\in\mathbb{R},\,\sigma>0\}$.
The Fisher metric $g=g^{\mathsf F}$ of the 
manifold of normaln distributions is 
computed as 
\[
g^{\mathsf F}=\frac{d\mu^2+2d\sigma^2}{\sigma^2}.
\]
The statistical manifold 
of the normal distribution 
equipped with $g^{\mathsf F}$ and 
e-connection is  
is homogeneous and identified with 
the $2$-dimensional solvable Lie group $G(1,\sqrt{2})$ 
equipped with a left invariant 
connection $\nabla${\rm:}
\[
\nabla_{e_1}e_1=0,
\quad 
\nabla_{e_1}e_2=-\sqrt{2}e_1,
\quad 
\nabla_{e_2}e_1=-\frac{1}{\sqrt{2}}e_1,
\quad 
\nabla_{e_2}e_2=-\sqrt{2}e_2
\]
under the replacement $x=\mu$ and $y=\sigma$. 
The skewness operator is 
given by 
\[
K(e_1,e_1)=\sqrt{2}e_2,
\quad 
K(e_1,e_2)=\sqrt{2}e_1,
\quad 
K(e_2,e_2)=2\sqrt{2}e_2.
\]
The neutral connection 
of the statistical manifold of the normal distribution 
is given by 
\[
{}^{(0)}\nabla_{e_1}e_1=0, 
\quad 
{}^{(0)}\nabla_{e_1}e_2=-
\frac{1}{2\sqrt{2}}e_1,
\quad 
{}^{(0)}\nabla_{e_2}e_1=\frac{1}{2\sqrt{2}}e_1,
\quad 
{}^{(0)}\nabla_{e_2}e_2=0.
\]
The neutral connection is \emph{not} compatible 
to $g$.
The curvature tensor field $R^{\alpha}$ 
of the homogenous statistical manifold $G$ 
of the normal distribution with respect to 
$\alpha$-connection is computed as
\[
R^{\alpha}(e_1,e_2)e_2=\nabla^{\alpha}_{e_1}\nabla^{\alpha}_{e_2}e_2
-\nabla^{\alpha}_{e_2}\nabla^{\alpha}_{e_1}e_2
-\nabla^{\alpha}_{[e_1,e_2]}e_2
=
-\frac{(1+\alpha)(1-\alpha)}{4}e_1.
\]
In particular $\nabla$ and $\nabla^{*}$ are flat. The skewness operator is given by
\[
K^{\alpha}(e_1,e_1)=\sqrt{2}\alpha e_2, \ \ 
K^{\alpha}(e_1,e_2)=\sqrt{2}\alpha e_1, \ \ 
K^{\alpha}(e_2,e_1)=\sqrt{2}\alpha e_1, \ \ 
K^{\alpha}(e_2,e_2)=2\sqrt{2}\alpha e_2.
\]
From these we get
\[
\frac{1}{4}[K^{\alpha}(e_1),K^{\alpha}(e_2)]e_2
=-\frac{\alpha^2}{2}e_1.
\]
Thus the statistical sectional curvature 
with respect to the $\alpha$-connection is
$-\alpha^2/2$.


The conjugate symmetry was introduced from 
purely differential geometric motivation. 
However it characterizes the Amari-Chentsov $\alpha$-connections 
on the statistical manifold of normal distribution 
derived from Statistics. 

\begin{Theorem}[\cite{FIK}]\label{thm:FIK}
Let $G(1,\sqrt{2})$ be the $2$-dimensional solvable Lie group
of the normal distribution equipped the Fischer metric $g$. 
Take an $G$-invariant connection $\nabla$ compatible to the 
Fischer metric. 
Then $(g,\nabla)$ is 
conjugate symmetric if and only if 
$\nabla$ is one of the Amari-Chentsov $\alpha$-connections.
\end{Theorem}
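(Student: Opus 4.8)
The plan is to parametrize all left-invariant statistical structures $(g,\nabla)$ on $G(1,\sqrt{2})$ compatible with the Fisher metric, and then to impose conjugate symmetry as a linear system on the parameters. Since ``compatible'' here means torsion free with $\nabla g$ totally symmetric, Corollary \ref{cor:5.5} and its companions tell me that such a $\nabla$ is determined by a symmetric trilinear form $C$ on the two-dimensional Lie algebra $\mathfrak g$, equivalently by a self-adjoint skewness operator $K$ with $K(X)Y=K(Y)X$. In the orthonormal basis $\{e_1,e_2\}$ this amounts to four real parameters $p,q,r,s$ via
\[
K(e_1)e_1=p\,e_1+q\,e_2,\quad K(e_1)e_2=K(e_2)e_1=q\,e_1+r\,e_2,\quad K(e_2)e_2=r\,e_1+s\,e_2,
\]
and the connection is $\nabla=\nabla^{g}-\tfrac12K$, where $\nabla^g$ is the Levi-Civita connection of $G(1,\sqrt2)$ recorded above, namely $\nabla^{g}_{e_1}e_1=\tfrac{1}{\sqrt2}e_2$, $\nabla^{g}_{e_1}e_2=-\tfrac{1}{\sqrt2}e_1$, and $\nabla^{g}_{e_2}e_1=\nabla^{g}_{e_2}e_2=0$.

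Next I would invoke Lemma \ref{lem:symmetric}: conjugate symmetry of $(g,\nabla)$ is equivalent to total symmetry of $\nabla^{g}K$, that is, to $(\nabla^{g}_{X}K)(Y,Z)=(\nabla^{g}_{Y}K)(X,Z)$ for all $X,Y,Z\in\mathfrak g$. Because this is an antisymmetrization in $X$ and $Y$, in the two-dimensional setting it suffices to test the single pair $(X,Y)=(e_1,e_2)$ against $Z=e_1$ and $Z=e_2$. Using left invariance of $K$, each side expands algebraically as
\[
(\nabla^{g}_{X}K)(Y,Z)=\nabla^{g}_{X}\!\big(K(Y)Z\big)-K(\nabla^{g}_{X}Y)Z-K(Y)(\nabla^{g}_{X}Z),
\]
so after substituting the Levi-Civita data everything becomes a polynomial expression in $p,q,r,s$.

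Carrying out these four scalar evaluations is the core computation. I expect $(\nabla^{g}_{e_2}K)(e_1,Z)$ to vanish identically, since $\nabla^{g}_{e_2}$ kills the basis, so the conditions reduce to the vanishing of $(\nabla^{g}_{e_1}K)(e_2,e_1)$ and $(\nabla^{g}_{e_1}K)(e_2,e_2)$. A direct expansion should give the $e_1,e_2$ components
\[
(\nabla^{g}_{e_1}K)(e_2,e_1)=\tfrac{1}{\sqrt2}(p-2r)\,e_1+\tfrac{1}{\sqrt2}(2q-s)\,e_2,\qquad
(\nabla^{g}_{e_1}K)(e_2,e_2)=\tfrac{1}{\sqrt2}(2q-s)\,e_1+\tfrac{3}{\sqrt2}r\,e_2,
\]
yielding the linear system $p-2r=0$, $2q-s=0$ and $r=0$, hence $p=r=0$ and $s=2q$. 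Thus the conjugate-symmetric compatible structures form precisely the one-parameter family $(p,q,r,s)=(0,q,0,2q)$.

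Finally I would match this family with the Amari-Chentsov $\alpha$-connections. The skewness operator recorded above reads $K^{(\alpha)}(e_1,e_1)=\sqrt2\,\alpha\,e_2$, $K^{(\alpha)}(e_1,e_2)=\sqrt2\,\alpha\,e_1$, and $K^{(\alpha)}(e_2,e_2)=2\sqrt2\,\alpha\,e_2$, i.e. $(p,q,r,s)=(0,\sqrt2\,\alpha,0,2\sqrt2\,\alpha)$. Setting $q=\sqrt2\,\alpha$ identifies this with the family found above, which establishes both implications simultaneously. The only genuine obstacle is bookkeeping in the covariant-derivative expansion; once the low-dimensional reduction isolates the single relevant antisymmetric pair, there is no conceptual difficulty.
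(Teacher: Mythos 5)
Your proposal is correct: I verified the covariant-derivative expansions, and the system $p-2r=0$, $2q-s=0$, $3r=0$ does isolate exactly the one-parameter family $(0,q,0,2q)$, which matches the recorded skewness operators $K^{(\alpha)}$ under $q=\sqrt{2}\,\alpha$, proving both directions at once. The paper itself quotes Theorem \ref{thm:FIK} from \cite{FIK} without reproving it, but your method --- parametrize the left-invariant skewness operator in the orthonormal frame and impose total symmetry of $\nabla^{g}K$ (Lemma \ref{lem:symmetric}) as a linear system --- is precisely the approach the paper uses for its own three-dimensional classifications (Theorems \ref{thm:uni} and \ref{thm:nonuni}), so this is the intended argument in its two-dimensional form.
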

This theorem is still valid for 
statistical Lie groups of multivariate 
normal distributions, see \cite{KO2}.

\subsection{Student's $t$-distributions}\label{sec:6.4}
The Student's $t$-\emph{distribution} $T(\nu;x,y)$  
is a probability distribution determined by the 
pdf \cite{Fisher,Student} (see also \cite{Boland,P}):
\[
f_{\nu}(t)=\frac{\varGamma(\tfrac{\nu+1}{2})}
{\sqrt{\pi\nu\,}\varGamma(\tfrac{\nu}{2})\>y}
\left(
1+\frac{1}{\nu}
\left(
\frac{t-x}{y}
\right)^2
\right)^{-\frac{\nu+1}{2}}
=\frac{1}{\sqrt{\nu}\,B(1/2,\nu/2)y}
\left(
1+\frac{1}{\nu}
\left(
\frac{t-x}{y}
\right)^2
\right)^{-\frac{\nu+1}{2}},
\]
where 
$x\in\mathbb{R}$ (\emph{location parameter}) and 
$y>0$ (\emph{scale parameter}) 
and $\nu\in\mathbb{Z}_{>0}$ (the \emph{degree of freedom}).  
Note that $\varGamma(x)$ is the Gamma function and 
$B(p,q)$ is the Beta function.

The $t$-distribution converges in law to the normal distribution 
$N(x,y)$ under $\nu\to\infty$.
 
We identify the set 
$M_{\nu}=\{T(\nu;x,y)\>|\>x\in\mathbb{R},y>0\}$ of all $t$-distributions 
with freedom $\nu$ with the upper half plane 
$\{(x,y)\in\mathbb{R}^2\>|\>y>0\}$.

The Fisher metric $g$ of $T(\nu;x,y)$ is computed as \cite{CB,CJ,Mori}: 
\begin{equation}\label{eq:FisMetT}
g=\frac{(\nu+1)dx^2+2\nu\,dy^2}{(\nu+3)y^2}
\end{equation}
The Levi-Civita connection $\nabla^g$ of $g$ is given by
\begin{equation}
\nabla^{g}_{\partial_x}\partial_{x}=\frac{\nu+1}{2\nu\,y}\partial_{y},\ \
\nabla^{g}_{\partial_x}\partial_{y}=
\nabla^{g}_{\partial_y}\partial_{x}=-\frac{1}{y}\partial_{x},\ \
\nabla^{g}_{\partial_y}\partial_{y}=-\frac{1}{y}\partial_{y}.
\end{equation}
Here 
\[
\partial_x=\frac{\partial}{\partial x},
\quad
\partial_y=\frac{\partial}{\partial y}.
\]
On the other hand the $\mathrm{e}$-connection is given by
\begin{equation}
\nabla_{\partial_x}\partial_{x}=\frac{3(\nu+1)}{\nu(\nu+5)y}\partial_{y},\ \
\nabla_{\partial_x}\partial_{y}=
\nabla_{\partial_y}\partial_{x}=-\frac{2(\nu+2)}{(\nu+5)y}\partial_{x},\ \
\nabla_{\partial_y}\partial_{y}=-\frac{3(\nu+1)}{(\nu+5)y}\partial_{y}.
\end{equation}
The skewness operator $K=-2(\nabla-\nabla^g)$ is described as
\[
K(\partial_x,\partial_x)=\frac{2(\nu-1)(\nu+1)}{2\nu(\nu+5)y}\,\partial_{y},
\quad
K(\partial_x,\partial_y)=\frac{2(\nu-1)}{(\nu+5)y}\,\partial_{x},
\quad
K(\partial_y,\partial_y)=\frac{4(\nu-1)}{(\nu+5)y}\,\partial_{y}.
\]

The Amari-Chentsov $\alpha$-connection is given by
\begin{align*}
\nabla^{(\alpha)}_{\partial_x}\partial_{x}
=&
\frac{(\nu+1)  \{(\nu+5)-\alpha(\nu-1)\}}
{2\nu(\nu+5)y}\,\partial_{y},
\\
\nabla^{(\alpha)}_{\partial_x}\partial_{y}=&
\nabla^{(\alpha)}_{\partial_y}\partial_{x}=
\frac{-\{(\nu+5)+\alpha(\nu-1)\}}{(\nu+5)y}\,\partial_{x},
\\
\nabla^{(\alpha)}_{\partial_y}\partial_{y}=&
\frac{-\{(\nu+5)+2\alpha(\nu-1)\}}{(\nu+5)y}\,\partial_{y}.
\end{align*}

Note that under the limit $\nu\to \infty$, 
the Fisher metric converges to the Fisher metric 
\[
\frac{dx^2+2dy^2}{y^2}
\]
of the statistical manifold of normal distributions. 
In addition the alpha-connections converge to those of 
normal distributions.

Mori pointed out that the $\alpha$-connection with 
$\alpha=(\nu+5)/(\nu-1)$ coincides 
with e-connection of the normal distribution. 

One can see that the statistical manifold $M_{\nu}$ is 
identified with the solvable Lie group \eqref{eq:2DG}. 
The Fisher metric of the statistical manifold 
of $t$-distributions is obtained by choosing
\[
\nu_1=\sqrt{\frac{\nu+1}{\nu+3}},\quad
\nu_2= \sqrt{\frac{2\nu}{\nu+3}}.
\]
Thus we identify the statistical manifold $M_{\nu}$ with 
$G(\sqrt{\nu+1}/\sqrt{\nu+3},\sqrt{2\nu}/\sqrt{\nu+3})$.

The skewness operator of the statistical manifold 
of the $t$-distributions is given by
\begin{align*}
K(e_1,e_1)=&\frac{\lambda_2}{\lambda_1^2}\>\frac{(\nu-1)(\nu+1)}{\nu(\nu+5)}\,e_2
=\frac{\sqrt{2(\nu+3)}\,(\nu-1)}{\sqrt{\nu}\,(\nu+5)}\,e_2,
\\
K(e_1,e_2)=&\frac{2(\nu-1)}{\lambda_2(\nu+5)}\,e_1
=\frac{\sqrt{2(\nu+3)}\,(\nu-1)}{\sqrt{\nu}\,(\nu+5)}\,e_1,
\\
K(e_2,e_2)=&\frac{4(\nu-1)}{\lambda_2(\nu+5)}\,e_2
=\frac{2\,\sqrt{2(\nu+3)}\,(\nu-1)}{\sqrt{\nu}\,(\nu+5)}\,e_2.
\end{align*}
These formulas show that 
the Amari-Chentsov
$\alpha$-connections of the statistical manifold 
of the $t$-distributions are left invariant. 
In particular, e-connection is left invariant. Hence 
the statistical manifold of $t$-distribution is a statistical Lie group.
\begin{Remark}
{\rm
Every $t$-distribution $T(\nu;x,y)$ is translated 
to $T(\nu;0,1)$ by left translation $(x,y)^{-1}$.
Note that $T(\nu;0,1)$ has mean $0$ and 
variance $\nu/(\nu-2)$ for $\nu>1$ and 
$\infty$ for $1<\nu\leq 2$. 
}
\end{Remark}
The Amari-Chentsov $\alpha$-connection of $M_{\nu}$ 
is given explicitly by
\begin{align*}
\nabla^{(\alpha)}_{e_1}e_1=& 
\frac{\sqrt{\nu+3}}{\sqrt{2\nu}}
\,
\frac{(\nu+5)-\alpha(\nu-1)}{\nu+5}\,e_2,
\\
\nabla^{(\alpha)}_{e_1}e_2=&
-\frac{\sqrt{\nu+3}}{\sqrt{2\nu}}\,
\frac{(\nu+5)+\alpha(\nu-1)}{\nu+5}\,e_1,\\
\nabla^{(\alpha)}_{e_2}e_1=&-
\frac{\alpha\sqrt{\nu+3}}{\sqrt{2\nu}}\,
\frac{\nu-1}{\nu+5}\,e_1,
\\
\nabla^{(\alpha)}_{e_2}e_2=&
-
\frac{2\alpha\sqrt{\nu+3}}{\sqrt{2\nu}}\,
\frac{\nu-1}{\nu+5}\,e_2.
\end{align*}
The curvature tensor field $R^{(\alpha)}$ of $\nabla^{(\alpha)}$ is 
given explicitly by 
\[
R^{(\alpha)}(X,Y)Z=
\frac{\nu+3}{2\nu}
\left(
\frac{\alpha(\nu-1)}{\nu+5}+1
\right)
\left(
\frac{\alpha(\nu-1)}{\nu+5}-1
\right)
\left(
g(Y,Z)X-g(Z,X)Y
\right).
\]
Thus the statistical manifold $M_{\nu}$ is 
a statistical manifold of constant sectional curvature.
We can confirm that $\nabla^{(\alpha)}$ with 
$\alpha=(\nu+5)/(\nu-1)$ is flat. 
One cen see that $M_{\nu}$ is conjugate symmetric.

\begin{Remark}[$q$-normal distributions]
{\rm Motivated by 
Boltmann-Gibbs probability distribution, 
$q$-exponential function $\exp_q$ is introduced 
in Tsallis statistics \cite{Ts} as (see also \cite{ST}): 
\[
\exp_{q}(x)=
\left(1+(1-q)x
\right)^{\frac{1}{1-q}}
\]
for $q\not=1$ and $x$ satisfying 
$1+(1-q)x>0$. Do not confuse with the so-called 
$q$-\emph{analogue} $\exp_q$ of $\exp$:
\[
\exp_{q}(x)=\sum_{n=0}^{\infty}\frac{x^n}{[n]_q!}
=\sum_{n=0}^{\infty}
\frac{(1-q)^nx^n}
{(1-q^n)(1-q^{n-1})\cdots(1-q)}.
\]
Replacing the usual exponential function 
by Tsallis's $q$-exponential 
function 
in the definition 
of statistical model 
of exponential families (and choosing 
$C=0$), 
one obtain the 
statistical model of  
$q$-\emph{exponential family}
\[
M=\left\{
\left.
p(x,\Vec{\theta})\>
=
\exp_{q}\left[
\sum_{i=1}^{n}
\theta^{i}F_{i}(x)-\psi(\theta)
\right]
\>\right|
\>\Vec{\theta}\in\Theta
\right\}.
\]
The role of $q$-exponential 
family in 
statistical 
physics, we refer to 
Naudts's article \cite{Naudts}.

A $q$-\emph{normal distribution}
is an example of $q$-exponential family. 
It is a probability distribution 
determined by the pdf:
\[
p(x,\mu,\sigma)=\max
\left[
\frac{1}{Z_q}
\left\{
1-\frac{1-q}{3-q}\>
\frac{(x-\mu)^2}{\sigma^2}
\right\}^{\frac{1}{1-q}}
,
0
\right],
\]
where
\[
Z_{q}=\left\{
\begin{array}{l}
\frac{\sqrt{3-q}}{\sqrt{1-q}}B\left(
\frac{2-q}{1-q},\frac{1}{2}
\right)\sigma,\quad -\infty <q<1,
\\
\frac{\sqrt{3-q}}{\sqrt{1-q}}
B\left(
\frac{3-q}{2(q-1)},\frac{1}{2}
\right)\sigma,\quad 1<q<\infty
\end{array}
\right.
\] 
The $q$-normal distribution converges in law to the 
normal distribution under the limit $q\to 1$. 
When $1<q<3$, 
under the parameter change 
\[
\nu:=\frac{3-q}{q-1},
\]
The $q$-normal distribution is transformed 
to $t$-distribution. Thus $t$-distribution 
constitues a subclass of $q$-normal 
distributions. For information geometry of 
$q$-normal distributions, see \cite{Matsuzoe,MO,TTM}.
}
\end{Remark}
The statistical manifold of multivariate normal distribution 
is also a statistical Lie group. See \cite{Ko,KO2}.

\section{Three dimensional unimodular Lie groups}\label{sec:7}
In view of Theorem \ref{thm:FIK}, we expect that 
conjugate symmetric homogeneous statistical manifolds 
constitute a particularly nice class 
of statistical manifolds. 
In this section and next section we 
look for $3$-dimensional examples 
of conjugate symmetric statistical Lie groups.

\subsection{The unimodular kernel}\label{sec:7.1}

Let $G$ be a Lie group with Lie algebra $\mathfrak{g}$.
Denote by $\mathrm{ad}$ the 
\textit{adjoint representation} of $\mathfrak{g}$, 
\[
\mathrm{ad}:\mathfrak{g}\to
\mathrm{End}(\mathfrak{g});
\ \ 
\mathrm{ad}(X)Y=[X,Y].
\]
Then one can see that 
$\mathrm{tr}\>\mathrm{ad}$;
\[
X\longmapsto \mathrm{tr}\>\mathrm{ad}(X)
\]
is a Lie algebra homomorphism into the commutative 
Lie algebra $\mathbb{R}$.
The kernel 
\[
\mathfrak{u}=\{X \in \mathfrak{g}
\ \vert \
\mathrm{tr}\> \mathrm{ad}(X)=0\}
\]
of $\mathrm{tr}\>\mathrm{ad}$ is an ideal of 
 $\mathfrak{g}$ which contains 
the ideal $[\mathfrak{g},\mathfrak{g}]$.

Now we equip a left invariant Riemannian metric
$g=\langle\cdot,\cdot\rangle$ on $G$. Denote by
$\mathfrak{u}$ the orthogonal 
complement of $\mathfrak{u}$ in $\mathfrak{g}$
with respect to $\langle\cdot,\cdot\rangle$. 
Then the homomorphism theorem implies that
$\dim \mathfrak{u}^{\perp}=\dim \mathfrak{g}/\mathfrak{u}\leq 1$.

The following criterion for unimodularity is known 
(see \cite[p.~317]{Milnor}).
\begin{Lemma}
A Lie group $G$ with a left invariant metric is unimodular if and only if 
$\mathfrak{u}=\mathfrak{g}$.
\end{Lemma}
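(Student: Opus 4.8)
The plan is to reduce unimodularity, which is a statement about Haar measure on the group $G$, to the infinitesimal condition $\mathrm{tr}\,\mathrm{ad}(X)=0$ for every $X\in\mathfrak{g}$, and then to observe that this infinitesimal condition is literally the assertion $\mathfrak{u}=\mathfrak{g}$. The left invariant metric plays no role in the equivalence: unimodularity is an intrinsic property of $G$, and the metric merely supplies the ambient context of the section.

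First I would recall that $G$ is unimodular precisely when its modular function $\Delta\colon G\to\mathbb{R}_{>0}$ is identically $1$, where $\Delta$ measures the failure of a left invariant Haar measure to be right invariant. The key classical identity is
\[
\Delta(a)=|\det\mathrm{Ad}(a)|,\qquad a\in G,
\]
obtained by computing how right translation acts on the top exterior power $\Lambda^{\dim\mathfrak{g}}\mathfrak{g}^{*}$.

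Next I would pass from the group to the Lie algebra by means of $\mathrm{Ad}(\exp X)=e^{\mathrm{ad}(X)}$, which yields
\[
\Delta(\exp X)=|\det e^{\mathrm{ad}(X)}|=e^{\mathrm{tr}\,\mathrm{ad}(X)},\qquad X\in\mathfrak{g}.
\]
Since $G$ is connected it is generated by $\exp(\mathfrak{g})$, and $\Delta$ is a homomorphism, so $\Delta\equiv 1$ holds if and only if $\mathrm{tr}\,\mathrm{ad}(X)=0$ for all $X\in\mathfrak{g}$. Finally, because $\mathfrak{u}$ is by definition the kernel of the homomorphism $X\mapsto\mathrm{tr}\,\mathrm{ad}(X)$ and is always contained in $\mathfrak{g}$, the vanishing $\mathrm{tr}\,\mathrm{ad}\equiv 0$ is equivalent to $\mathfrak{u}=\mathfrak{g}$; this settles both implications simultaneously.

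The routine verifications here are elementary, so the only delicate point is establishing $\Delta(a)=|\det\mathrm{Ad}(a)|$ cleanly: one must track the definition of the modular function together with the orientation conventions on $\Lambda^{\dim\mathfrak{g}}\mathfrak{g}^{*}$, and invoke connectedness of $G$ to promote the infinitesimal vanishing to the global statement $\Delta\equiv 1$. I expect this identification, rather than any computation, to be the main obstacle.
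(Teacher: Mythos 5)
Your proof is correct; the paper itself states this lemma without proof, citing Milnor, and your argument via the modular function identity $\Delta(a)=|\det\mathrm{Ad}(a)|$, the infinitesimal formula $\Delta(\exp X)=e^{\mathrm{tr}\,\mathrm{ad}(X)}$, and connectedness is exactly the classical argument in that reference. The only point to flag is that you (rightly) invoke connectedness of $G$, which the lemma's statement omits but which is implicit throughout the paper's setting, and you correctly observe that the left invariant metric is irrelevant to the equivalence.
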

Based on this criterion, the ideal $\mathfrak{u}$ is called the
\textit{unimodular kernel} of $\mathfrak{g}$. 
In particular, for a $3$-dimensional 
non-unimodular Lie group $G$, its unimodular kernel 
$\mathfrak{u}$ is commutative and of $2$-dimension.

\subsection{The unimodular basis}\label{sec:7.2}

Let $G$ be a $3$-dimensional unimodular
Lie group with a
left invariant Riemannian metric $g$. Then there exists an orthonormal basis
$\{e_1,e_2,e_3\}$ of $(\mathfrak{g},\langle 
\cdot,\cdot
\rangle)$ such that
\[
[e_2,e_3]=c_{1}e_{1},\ \ 
[e_3,e_1]=c_{2}e_{2},\ \ 
[e_1,e_2]=c_{3}e_{3}.
\] 
By the left translation of $G$,
$\mathcal{E}=\{e_1,e_2,e_3\}$
is regarded as a left invariant
orthonormal frame field of $G$.

According to the
signature of strcture constants,
the following classification table is
obtained by Milnor \cite{Milnor} (up to 
numeration of $c_1$, $c_2$ and $c_3$).

\begin{center}
\begin{tabular}{|c|c|c|}
\hline
Signature of $(c_1,c_2,c_3)$ & {} & Associated connected Lie group
\\
\hline
$(+,+,+)$  & simple, compact&
$\mathrm{SU}_2$ or 
$\mathrm{SO}_3$ 
\\
\hline
 $(+,+,-)$ or  $(+,-,-)$
 & simple, noncompact &
$\mathrm{SL}_{2}\mathbb{R}$ 
or $\mathrm{SO}^{+}_{2,1}$
\\
\hline
$(0,+,+)$
& solvable  &
$\mathrm{E}_{2}=\mathrm{SO}_2\ltimes\mathbb{R}^2$ 
\\
\hline
$(0,+,-)$
& solvable &
$\mathrm{E}_{1,1}=\mathrm{SO}^{+}_{1,1}\ltimes\mathbb{R}^2$ 
\\
\hline
$(+,0,0)$
& nilpotent &
Heisenberg group $\mathrm{Nil}_{3}$ 
\\
\hline
$(0,0,0)$
& Abelian &
$\mathbb{R}^{3}$ 
\\
\hline
\end{tabular}
\end{center}

The symmetric bilinear map $U$ is given by
\[
U(e_1,e_2)=\frac{1}{2}(-c_1+c_2)e_3,\ \
U(e_1,e_3)=\frac{1}{2}(c_1-c_3)e_2,\ \
U(e_2,e_3)=\frac{1}{2}(-c_2+c_3)e_1.
\]

The Levi-Civita connection of $g$ 
is described as 
\[
\nabla^{g}_{e_1}e_{1}=0,\  \nabla^{g}_{e_1}e_{2}=
\lambda_{1}e_{3},\ \
\nabla^{g}_{e_1}e_{3}=-\lambda_{1}e_{2},
\]
\[
\nabla^{g}_{e_2}e_{1}=-\lambda_{2}e_3,\  
\nabla^{g}_{e_2}e_{2}=0,\ \
\nabla^{g}_{e_2}e_{3}=\lambda_{2}e_{1},
\]
\[
\nabla^{g}_{e_3}e_{1}=\lambda_{3}e_{2},\ \
\nabla^{g}_{e_3}e_{2}=-\lambda_{3}e_{1},\ \
\nabla^{g}_{e_3}e_{3}=0,
\]

Here the constants $\{\lambda_{j}\}$ are defined by
\[
\lambda_{j}:=(c_1+c_2+c_3)/2-c_{j}, \quad 
j=1,2,3.
\]

The Riemannian curvature $R^g$ is given by
\[
R^g(e_1,e_2)e_1=(\lambda_{1}\lambda_{2}-c_{3}\lambda_{3})e_{2},\ \ 
R^g(e_1,e_2)e_2=-(\lambda_{1}\lambda_{2}-c_{3}\lambda_{3})e_{1},\ \ 
\]
\[
R^g(e_2,e_3)e_2=(\lambda_{2}\lambda_{3}-c_{1}\lambda_{1})e_{3},\ \ 
R^g(e_2,e_3)e_3=-(\lambda_{2}\lambda_{3}-c_{1}\lambda_{1})e_{2},\ \ 
\]
\[
R^g(e_1,e_3)e_1=(\lambda_{3}\lambda_{1}-c_{2}\lambda_{2})e_{3},\ \ 
R^g(e_1,e_3)e_3=-(\lambda_{3}\lambda_{1}-c_{2}\lambda_{2})e_{1}. 
\]

\begin{Example}\label{eg:SasakianG}
{\rm Every simply connected complete 
$3$-dimensional Sasakian $\varphi$-symmetric space 
is realized as a unimodular Lie group 
$G=G(c)$ whose Lie algebra 
$\mathfrak{g}(c)$ is generated by a 
unimodular basis 
$\{e_1,e_2,e_3\}$ 
satisfying
\[
[e_1,e_2]=2e_3,
\quad 
[e_2,e_3]=\frac{c+3}{2}e_1,
\quad 
[e_3,e_1]=\frac{c+3}{2}e_2
\]
to together with a 
left invariant Sasakian structure 
\[
\varphi e_1=e_2,\quad \varphi e_2=-e_1,
\quad 
\varphi e_3=0,
\quad 
\xi=e_3,\quad \eta=g(\xi,\cdot).
\]
One can see that 
\[
\mathfrak{g}(c)\cong 
\begin{cases}
\mathfrak{su}_2 \quad c>-3
\\
\mathfrak{nil}_3\quad c=-3
\\
\mathfrak{sl}_2\mathbb{R}\quad c<-3
\end{cases}
\]
In particular $G(c)$ is $\mathrm{SU}(2)=
\mathbb{S}^3$ equipped with a 
bi-invariant Riemannian metric of constant 
curvature $1$. 
As we saw in Proposition \ref{prop:2.5},
\[
\nabla^{(\alpha)}_{X}Y=\nabla^{g}_{X}Y-\frac{\alpha}{2}\eta(X)\eta(Y)\xi,
\quad \alpha\in\mathbb{R}
\]
gives a one-parameter family $\{(\varphi,\xi,\eta,g,\nabla^{(\alpha)})\}_{\alpha
\in\mathbb{R}}$ of left invariant Sasakian statistical 
structures on $G(c)$. 
In case $G(1)=\mathbb{S}^3$, $\nabla^{(\alpha)}$ coincides with 
the bi-invariant connection 
given in Corollary \ref{cor:5.5} with $K(X)Y=\alpha\eta(X)\eta(Y)\xi$.
}
\end{Example}

\subsection{Conjugate-symmetry}\label{sec:7.3}
We have characterized the $\alpha$-connections 
on the statistical manifold of the normal distribution 
as a left invariant connection compatible to the metric and 
its covariant derivative of the cubic form is totally symmetric.

In this subsection we consider conjugate-symmetry of $3$-dimensional 
unimodular statistical Lie groups.

\begin{Theorem}\label{thm:uni}
 Let $G$ be a $3$-dimensional unimodular Lie group equipped with 
 a left invariant statistical structure $(g,\nabla)$ such that $\nabla\not=\nabla^g$. 
If $(G,g,\nabla)$ is conjugate-symmetric then $G$ is locally
isomorphic to the special unitary group $\mathrm{SU}_2$, 
Euclidean motion group $\mathrm{E}_2
=\mathrm{SO}_2\ltimes\mathbb{R}^2$ or $\mathbb{R}^3$.
\end{Theorem}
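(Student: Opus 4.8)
The plan is to turn conjugate symmetry into a finite-dimensional linear algebra problem on the Lie algebra and then run through Milnor's signature table from Section~\ref{sec:7.2}. By the cubic-form reformulation following Proposition~\ref{prop:5.1}, a left invariant statistical structure $(g,\nabla)$ on $G$ with $\nabla\neq\nabla^{g}$ is the same datum as a nonzero symmetric trilinear form $C$ on $\mathfrak{g}$, and by Lemma~\ref{lem:symmetric} the structure is conjugate symmetric if and only if $\nabla^{g}C$ is totally symmetric. Since $C$ and $g$ are left invariant, $\nabla^{g}C$ is purely algebraic:
\[
(\nabla^{g}_{X}C)(Y,Z,W)=-C(\nabla^{g}_{X}Y,Z,W)-C(Y,\nabla^{g}_{X}Z,W)-C(Y,Z,\nabla^{g}_{X}W),
\]
so conjugate symmetry becomes a homogeneous linear system on the components of $C$ with coefficients built from the Levi-Civita data of Section~\ref{sec:7.2}.

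First I would record the convenient shape of that data. Writing $C_{ijk}=C(e_i,e_j,e_k)$ in the unimodular orthonormal basis $\{e_1,e_2,e_3\}$, the Levi-Civita connection of Section~\ref{sec:7.2} acts as $\nabla^{g}_{e_a}e_b=\lambda_a\,(e_a\times e_b)$, that is, $\nabla^{g}_{e_a}$ is $\lambda_a$ times the infinitesimal rotation about the axis $e_a$, with $\lambda_a=(c_1+c_2+c_3)/2-c_a$. Total symmetry of $\nabla^{g}C$ is equivalent to symmetry under interchanging the derivative slot with the first slot, i.e. $(\nabla^{g}_{e_a}C)(e_b,e_c,e_d)=(\nabla^{g}_{e_b}C)(e_a,e_c,e_d)$ for all indices. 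Expanding with the rotation form above yields explicit relations such as
\[
2\lambda_1 C_{223}+2\lambda_2 C_{113}=(\lambda_1+\lambda_2)\,C_{333}=c_3\,C_{333},
\]
together with its cyclic and sign-changed companions; this is the system whose nonzero solutions must be classified.

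Next I would cut down the casework using the built-in symmetry: simultaneous permutations of the basis and the sign changes $e_a\mapsto-e_a$ act on $(c_1,c_2,c_3)$ by permutations and overall sign reversal while acting linearly and invertibly on $C$, so it suffices to treat one representative of each row of Milnor's table. For the abelian case all $\lambda_a=0$, whence $\nabla^{g}C=0$ for every $C$ and any nonzero cubic form is admissible, giving $\mathbb{R}^3$. For $\mathrm{SU}_2$ (signature $(+,+,+)$) and $\mathrm{E}_2$ (signature $(0,+,+)$) I would exhibit an explicit nonzero solution of the system and verify total symmetry directly. For the remaining rows I would show the system forces $C=0$: the diagonal relations immediately kill $C_{aaa}$ whenever $c_a\neq0$, and the coupled off-diagonal relations, read against the specific signs of $(c_1,c_2,c_3)$, then propagate to annihilate the rest in the mixed-sign case $\mathrm{SL}_2\mathbb{R}$, the case $\mathrm{E}_{1,1}$ of signature $(0,+,-)$, and the nilpotent case $\mathrm{Nil}_3$ of signature $(+,0,0)$.

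The main obstacle is precisely this last step. In the excluded cases the connection is nonzero (the $\lambda_a$ do not all vanish), so the constraints genuinely couple the ten components of $C$, and one cannot read off $C=0$ from a single equation; for $\mathrm{Nil}_3$ in particular the diagonal relations leave $C_{222}$ and $C_{333}$ free, so the vanishing must come from the cross relations. The delicate bookkeeping is to organize the full system (the ten components, the relation in the display above, and all its permuted and sign-changed companions) so that its rank is visibly maximal exactly for the mixed-signature, $(0,+,-)$, and $(+,0,0)$ rows while dropping in the $(+,+,+)$, $(0,+,+)$, and $(0,0,0)$ rows. Once the solution space is determined in each row, comparison with the associated connected Lie groups in Milnor's table yields the stated local isomorphism types.
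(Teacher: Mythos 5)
Your setup is sound and in fact coincides with the paper's: left invariance makes $\nabla^{g}C$ purely algebraic, the observation that $\nabla^{g}_{e_a}=\lambda_a\,(e_a\times\,\cdot\,)$ is correct, and your sample relation
$2\lambda_1 C_{223}+2\lambda_2 C_{113}=(\lambda_1+\lambda_2)C_{333}$
is a genuine consequence of total symmetry, equivalent to part of the system \eqref{eq:a-System}--\eqref{eq:e2} used in the paper's proof. The genuine gap is your reduction step: \emph{it is not true that it suffices to treat one representative of each row of Milnor's table}. Permutations and sign changes of the basis normalize only the \emph{signature} of $(c_1,c_2,c_3)$; within a fixed signature the structure constants still range over a continuum of mutually non-isometric left invariant metrics on the same group, and the rank of your linear system depends on the actual ratios $c_1:c_2:c_3$, not on the signs alone. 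Concretely, on $\mathrm{SU}_2$ the system forces $C=0$ for the bi-invariant metric $c_1=c_2=c_3$ --- this is exactly why the Sasakian statistical structures of Example \ref{eg:SasakianG} fail to be conjugate symmetric, as the Remark following Theorem \ref{thm:uni} records --- and nontrivial solutions exist only for the special family $c_1:c_2:c_3=1:3:1$ (up to numeration); likewise on $\mathrm{E}_2$ they exist only for the flat metric $c_1=c_2$, $c_3=0$. So your plan to ``exhibit an explicit nonzero solution'' at a chosen representative can fail outright (it does at the round metric), and your claim that the rank is maximal exactly on the excluded rows and drops on the rows $(+,+,+)$, $(0,+,+)$, $(0,0,0)$ is false as stated.

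The deeper problem is that the theorem is an implication whose burden is universal over metrics: to exclude $\mathrm{SL}_2\mathbb{R}$, $\mathrm{E}_{1,1}$ and $\mathrm{Nil}_3$ you must prove $C=0$ for \emph{every} left invariant metric on those groups, which one representative per row cannot deliver. The paper's proof therefore organizes the casework not by Milnor rows but by the vanishing pattern of the quantities $\lambda_i+3\lambda_j$ appearing in \eqref{eq:b-System} and \eqref{eq:c-System}, which is the true discriminant of the system; the admissible algebras and the exceptional metrics then fall out of the constraints (e.g.\ $\lambda_1+3\lambda_2=0$ together with $3\lambda_2+\lambda_3=0$ forces $c_1:c_2:c_3=1:3:1$, hence $\mathfrak{su}_2$), and notably the subcase $3\lambda_3+\lambda_1=0$ leads to $\mathfrak{sl}_2\mathbb{R}$ where the system \emph{still} forces $K=0$, so that group is excluded even though a rank drop occurs in part of the system. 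Your argument can be repaired by replacing the one-representative reduction with a case analysis over all $(\lambda_1,\lambda_2,\lambda_3)$, i.e.\ over all metrics, as the paper does; as written, the central step of the proof is missing. (Exhibiting nonzero solutions on $\mathrm{SU}_2$ and $\mathrm{E}_2$ is welcome for non-vacuousness but is logically unnecessary for the stated implication.)
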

\begin{proof}
Take a unimodular basis $\{e_1,e_2,e_3\}$ and 
express the skewness operator $K$ as 
\[
K(e_i)e_j=\sum_{\ell=1}^{3}K_{ij}^{\ell}e_{\ell}, 
\quad i,j=1,2,3.
\]
Then we know that 
\[
K_{ij}^{\ell}=K_{ji}^{\ell}, \quad 
K_{ij}^{\ell}=K_{i\ell}^{j}.
\]
The total symmetry 
of $\nabla^{g}K$ is 
the system
\begin{equation}\label{eq:a-System}
K^{1}_{23}(\lambda_1+\lambda_2)=
K^{1}_{23}(\lambda_2+\lambda_3)=
K^{1}_{23}(\lambda_3+\lambda_1)=0,
\end{equation} 
\begin{equation}\label{eq:b-System}
K^{1}_{31}(\lambda_1+3\lambda_2)=
K^{2}_{12}(\lambda_2+3\lambda_3)=
K^{3}_{23}(\lambda_3+3\lambda_1)=0,
\end{equation}
\begin{equation}\label{eq:c-System}
K^{1}_{12}(\lambda_1+3\lambda_3)=
K^{2}_{23}(\lambda_2+3\lambda_1)=
K^{3}_{31}(\lambda_3+3\lambda_2)=0,
\end{equation}
\begin{align}\label{eq:d2}
&\lambda_{1}K^{2}_{22}+\lambda_{2}K^{1}_{12}
-(2\lambda_1+\lambda_2)K^{2}_{33}=0,
\\
& \lambda_{2}K^{2}_{33}+\lambda_{3}K^{3}_{22}
-(2\lambda_2+\lambda_3)K^{3}_{11}=0,
\label{eq:d3}
\\
& \lambda_{1}K^{1}_{33}+\lambda_{3}K^{1}_{11}
-(2\lambda_3+\lambda_1)K^{1}_{22}=0,
\label{eq:d1}
\end{align}
\begin{align}\label{eq:e3}
&
\lambda_{1}K^{3}_{33}+\lambda_{3}K^{3}_{11}
-(2\lambda_1+\lambda_3)K^{3}_{22}=0,
\\
& 
\lambda_{1}K^{1}_{22}+\lambda_{2}K^{1}_{11}
-(2\lambda_2+\lambda_1)K^{1}_{33}=0,
\label{eq:e1}
\\
& \lambda_{2}K^{2}_{33}+\lambda_{3}K^{2}_{22}
-(2\lambda_3+\lambda_2)K^{2}_{11}=0.
\label{eq:e2}
\end{align}

First we consider the case $\lambda_1=\lambda_2=\lambda_3=0$. 
In this case $\mathfrak{g}$ is abelian. For any tensor $K$
of type $(1,2)$ on $\mathfrak{g}$, 
$\nabla:=\nabla^{g}-K/2$ gives a conjugate 
symmetric left invariant statistical structure. 

Hereafter we assume that $\lambda_1^2+\lambda_2^2+\lambda_3^2\not=0$. 
Under this assumption we have 
$(\lambda_1+\lambda_2)^{2}+(\lambda_2+\lambda_3)^{2}+(\lambda_3+\lambda_1)^{2}\not=0$. 
Hence the system \eqref{eq:a-System} implies that $K_{12}^{3}=0$.

\begin{enumerate}
\item The case: All of $\lambda_1+3\lambda_2$, $\lambda_2+3\lambda_3$ and 
$\lambda_3+3\lambda_1$ are non-zero:

In this case \eqref{eq:b-System} implies that 
\[
K^{1}_{13}=K^{2}_{12}=K^{3}_{23}=0.
\]
Since $K_{12}^{2}=0$, the equations 
\eqref{eq:d1} and \eqref{eq:e1} are reduced to 
\[
\lambda_{2}K^{1}_{11}-(\lambda_1+2\lambda_2)K^{1}_{33}=0,
\quad 
\lambda_{3}K^{1}_{11}+\lambda_{1}K^{1}_{33}=0.
\]
We dedide our discussion into 
two parts: (i) $3\lambda_2+\lambda_3\not=0$ and 
(ii) $3\lambda_2+\lambda_3=0$. 

In the former case, we notice that $K^{1}_{33}$ from 
\eqref{eq:c-System}. The equations 
\eqref{eq:d1} and \eqref{eq:e1} are reduced more to 
\[
\lambda_{2}K^{1}_{11}=\lambda_{3}K^{1}_{11}=0.
\]
If $K^{1}_{11}\not=0$, we have $\lambda_2=\lambda_3=0$. This 
contradicts to the assumption 
$\lambda_2+3\lambda_3\not=0$. Hence $K^{1}_{11}=0$.

In the latter case, the equations 
\eqref{eq:d1} and \eqref{eq:e1} imply that $K^{1}_{33}=0$. Hence we obtain 
$K^{1}_{11}=0$ again. By analogous discussions on 
\eqref{eq:d2}-\eqref{eq:e2} and \eqref{eq:d3}-\eqref{eq:e3}, 
we deduce that $K^{\ell}_{ij}=0$ for all $i$, $j$ and $k$.

It should be remarked that one can deduce that $K=0$ when 
all of 
$3\lambda_1+\lambda_2$, 
$3\lambda_2+\lambda_3$,
$3\lambda_3+\lambda_1$ are
non-zero. 

\item The case: Two of $\lambda_1+3\lambda_2$, $\lambda_2+3\lambda_3$ and 
$\lambda_3+3\lambda_1$ are non-zero and the remain one is non-zero:

By numeration, it suffices to consider the case
\[
\lambda_1+2\lambda_2=0,
\quad 
\lambda_2+2\lambda_3=0,
\quad 
\lambda_3+2\lambda_1\not=0.
\quad 
\]
In this case, 
\[
3\lambda_3+\lambda_1=12\lambda_3\not=0,
\quad
3\lambda_1+\lambda_2=24\lambda_3\not=0,
\quad
3\lambda_2+\lambda_3=-8\lambda_3\not=0.
\]
As we have pointed our above, $K=0$ holds under this condition.

\item The case: Only one of $\lambda_1+3\lambda_2$, $\lambda_2+3\lambda_3$ and 
$\lambda_3+3\lambda_1$ is zero and the other two are non-zero:

By numeration, it suffices to consider the case
\[
\lambda_1+3\lambda_2=0,
\quad 
\lambda_2+3\lambda_3\not=0,
\quad 
\lambda_3+3\lambda_1\not=0.
\quad 
\]
Note that the first equation is equivalent to the relation $c_1-c_2+2c_3=0$.
\begin{enumerate}
\item Subcase: $3\lambda_1+\lambda_2\not=0$, $3\lambda_2+\lambda_3=0$ and $3\lambda_3+\lambda_1\not=0$:
Since $3\lambda_2+\lambda_3=0$ is equivalent to to $2c_1-c_2+c_3=0$, 
the structure constants $\{c_1,c_2,c_3\}$ satisfy 
$c_1:c_2:c_3=1:3:1$. Hence the Lie algebra is $\mathfrak{su}(2)$. The 
left invariant metric has $4$-dimensional isometry group (and it is 
naturally reductive). Note that $\lambda_1=\lambda_3=-3\lambda_2\not=0$. 

From \eqref{eq:a-System} and 
\eqref{eq:b-System} we get
\[
K^{1}_{22}=K^{2}_{33}=K^{2}_{23}=K^{1}_{12}=0.
\]
Next, from \eqref{eq:d1}-\eqref{eq:e1}, 
we have 
$K^{1}_{11}+K^{1}_{33}=0$. 
On the other hand, 
 \eqref{eq:d2}--\eqref{eq:e2} imply that
 $K^{2}_{22}=K^{1}_{12}=0$. 
 Finally from \eqref{eq:d3}--\eqref{eq:e3}, 
 we have
 $K^{3}_{33}+K^{1}_{13}=0$. 
 
The Lie group $G$ admits non-trivial 
conjugate symmetric left invariant 
statistical structure whose skewness field 
is parametrized as
\[
K^{1}_{11}=-K^{1}_{33}=\alpha, 
\quad 
K^{3}_{33}=-K^{1}_{13}=\beta,
\quad 
\alpha,\beta\in\mathbb{R}.
\]
The skewness operator $K=K^{(\alpha,\beta)}$
has the form 
\[
K^{(\alpha,\beta)}=\alpha K^{(1,0)}+\beta K^{(0,1)}.
\]
Thus $\nabla^{g}-K^{(\alpha,0)}/2$ is the 
$\alpha$-connection of $\nabla^{g}-K^{(1,0)}/2$. 
Moreover, $\nabla^{g}-K^{(0,\alpha)}/2$ is the 
$\alpha$-connection of $\nabla^{g}-K^{(0,1)}/2$. 

\item Subcase: $3\lambda_1+\lambda_2\not=0$, $3\lambda_2+\lambda_3\not=0$ and $3\lambda_3+\lambda_1=0$:
Since $3\lambda_3+\lambda_1=0$ is equivalent to to $c_1+2c_2-c_3=0$, 
the structure constants $\{c_1,c_2,c_3\}$ satisfy 
$c_1:c_2:c_3=1:-1:-1$. Hence the Lie algebra is $\mathfrak{sl}_2\mathbb{R}$.

From \eqref{eq:b-System} and 
\eqref{eq:c-System} we get
\[
K^{1}_{22}=K^{2}_{33}=K^{2}_{23}=K^{1}_{33}=0.
\]
Next, from \eqref{eq:d1}-\eqref{eq:e1}, 
we have 
$K^{1}_{11}+K^{1}_{22}=0$. 
On the other hand, 
 \eqref{eq:d2}--\eqref{eq:e2} imply that
 $K^{2}_{22}=K^{1}_{12}=0$. 
 Finally from \eqref{eq:d3}--\eqref{eq:e3}, 
 we have
 $K^{3}_{33}+K^{1}_{13}=0$. 
 Hence $K=0$. 
 
\item Subcase: $3\lambda_1+\lambda_2=0$, 
$3\lambda_2+\lambda_3\not=0$ and $3\lambda_3+\lambda_1\not=0$:
In this case we have $\lambda_1+\lambda_2=0$. This relation 
implies that $c_1=c_2$ and $c_3=0$ and hence 
$\lambda_1=\lambda_2=0$. Since we assumed that 
$\mathfrak{g}$ is non-abelian, $\lambda_3\not=0$.
Moreover the Lie algebra $\mathfrak{g}$ is 
$\mathfrak{e}(2)$.

From \eqref{eq:b-System} and 
\eqref{eq:c-System} we get
\[
K^{1}_{22}=K^{2}_{33}=K^{1}_{12}=K^{1}_{33}=0.
\]
Next, from \eqref{eq:d1}-\eqref{eq:e1}, 
we have 
$K^{1}_{11}=0$. 
On the other hand, 
 \eqref{eq:d2}--\eqref{eq:e2} imply that
 $K^{2}_{22}=0$. 
 Finally from \eqref{eq:d3}--\eqref{eq:e3}, 
 we have
 $K^{1}_{13}=K^{2}_{23}=0$. 
 
The Lie group $G$ admits non-trivial 
conjugate symmetric left invariant 
statistical structure whose skewness field 
is parametrized as
\[
K^{1}_{13}=K^{2}_{23}=\alpha, 
\quad 
K^{3}_{33}=\beta,
\quad 
\alpha,\beta\in\mathbb{R}.
\]
The skewness operator $K=K^{(\alpha,\beta)}$
has the form 
\[
K^{(\alpha,\beta)}=\alpha K^{(1,0)}+\beta K^{(0,1)}.
\]
Thus $\nabla^{g}-K^{(\alpha,0)}/2$ is the 
$\alpha$-connection of $\nabla^{g}-K^{(1,0)}/2$. 
Moreover, $\nabla^{g}-K^{(0,\alpha)}/2$ is the 
$\alpha$-connection of $\nabla^{g}-K^{(0,1)}/2$. 

\end{enumerate}
\end{enumerate}
\end{proof}

\begin{Remark}{\rm
Let $G(c)$ be a simply connected 
unimodular Lie group equipped with a 
left invariant Sasakian $\varphi$-symmetric 
structure exhibited in Example 
\ref{eg:SasakianG}. The $\alpha$-connection is 
given by
\[
\nabla^{(\alpha)}_{X}Y=
\nabla^{g}_{X}Y-\frac{\alpha}{2}K(X)Y, 
\quad 
K(X)Y=\eta(X)\eta(Y)\xi.
\]
The skeness operator has only one 
non-zero component $K^{3}_{33}=1$.  
The statistical structure 
$(g,\nabla^{(1)})$ is 
conjugate-symmetric if and only if 
$\lambda_1=0$. This 
is equivalent to $-c_1+c_2+c_3$. 
On the other hand, 
$-c_1+c_2+c_3=2$. Thus 
$(G(c),g,\nabla^{(1)})$ is not conjugate symmetric.
}
\end{Remark}


\section{Three dimensional non-unimodular Lie groups}\label{sec:8}
\subsection{Normalization}\label{sec:8.1}
Let $G$ be a non-unimodular $3$-dimensional Lie group with a 
left invariant metric $g$ and with  Lie algebra 
$\mathfrak{g}$.
Then the non-unimodular property of $G$
implies that the unimodular kernel 
$\mathfrak{u}$ is a $2$-dimensional ideal of $\mathfrak{g}$ which contains 
the ideal $[\mathfrak{g},\mathfrak{g}]$. 

On the Lie algebra $\mathfrak{g}$, we can take an orthonormal basis
$\{e_1,e_2,e_3\}$ such that
\begin{enumerate}
\item $\langle e_{1}, X\rangle=0\ \mbox{for all}\> X \in \mathfrak{u}$,
\item $\langle [e_1,e_2], [e_1,e_3]\rangle=0$,
\end{enumerate}
where $\{e_2,e_3\}$ is an orthonormal basis of $\mathfrak{u}$.

The representation matrix $A$ of $\mathrm{ad}(e_1):\mathfrak{u}
\to\mathfrak{u}$ relative to the basis $\{e_2,e_3\}$ is expressed as
\[
A=\left(
\begin{array}{cc}
\alpha & \gamma
\\
\beta & \delta
\end{array}
\right).
\]
Then the commutation relations of the basis are given by
\[
[e_1,e_2]=\alpha e_{2}+\beta e_{3},\quad 
[e_2,e_3]=0,\quad  
[e_1,e_3]=\gamma e_2+\delta e_3,
\]
with $\mathrm{tr}\>A=\alpha+\delta\not=0$ and $\alpha\gamma+\beta \delta=0$.
These commutation relations imply that $\mathfrak{g}$ is solvable.

Under a suitable homothetic change of the metric, we may assume that
$\alpha+\delta=2$. Then the constants $\alpha$,
$\beta$, $\gamma$ and $\delta$ are represented
as 
\[
\alpha=1+\xi,\quad  \beta=(1+\xi)\eta,\quad 
\gamma=-(1-\xi)\eta,\quad 
\delta=1-\xi.
\]
If necessarily, by changing the sign of $e_1$, $e_2$ and $e_3$,
we may assume that the constants $\xi$ and $\eta$ satisfy the condition $\xi,\eta\geq 0$. 
We note that for the case that 
$\xi=\eta=0$, $(G,g)$ has constant negative curvature 
(see Example \ref{Example2.2}).

From now on we work under this normalization.
Then the commutation relations are rewritten as 
\begin{equation}\label{eq:nonUnibasis}
[e_1,e_2]=(1+\xi)(e_{2}+\eta e_{3}),
\quad 
[e_2,e_3]=0,
\quad 
[e_3,e_1]=(1-\xi)(\eta e_{2}-e_{3}).
\end{equation}
We refer 
$(\xi,\eta)$ as the \emph{structure constants} 
of the non-unimodular Lie algebra $\mathfrak{g}$.

Non-unimodular Lie algebras $\mathfrak{g}=\mathfrak{g}(\xi,\eta)$ are classified by the 
\emph{Milnor invariant} $\mathcal{D}:=\det A=(1-\xi^2)(1+\eta^2)$.
More precisely we have the following fact (\textit{cf} \cite{Milnor}).
\begin{Proposition}
For any pair of $(\xi,\eta)$ and 
$(\xi^\prime,\eta^\prime)$ which are not $(0,0)$, two Lie algebras 
$\mathfrak{g}(\xi,\eta)$ and $\mathfrak{g}(\xi^\prime,\eta^\prime)$ are 
isomorphic if and only if their Milnor invariants $\mathcal{D}$ and 
$\mathcal{D}^\prime$ agree.
\end{Proposition}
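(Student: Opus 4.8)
The plan is to exploit the fact that the unimodular kernel $\mathfrak{u}$ is an intrinsic object and to reduce the isomorphism problem for $\mathfrak{g}(\xi,\eta)$ to a conjugacy problem for the single $2\times 2$ matrix $A=\mathrm{ad}(e_1)\vert_{\mathfrak{u}}$. First I would record that, since $\mathrm{tr}\,\mathrm{ad}$ is an isomorphism invariant, every Lie algebra isomorphism $\phi:\mathfrak{g}(\xi,\eta)\to\mathfrak{g}(\xi^\prime,\eta^\prime)$ carries $\mathfrak{u}=\ker(\mathrm{tr}\,\mathrm{ad})$ onto $\mathfrak{u}^\prime$. Because $\mathfrak{u}$ is the $2$-dimensional abelian ideal spanned by $\{e_2,e_3\}$ with $[e_2,e_3]=0$, the whole bracket is encoded by the endomorphism $A=\mathrm{ad}(e_1)\vert_{\mathfrak{u}}$, whose matrix in the basis $\{e_2,e_3\}$ satisfies $\mathrm{tr}\,A=2$ and $\det A=(1-\xi^2)(1+\eta^2)=\mathcal{D}$ by \eqref{eq:nonUnibasis}.

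For the forward implication I would write $\phi(e_1)=c\,e_1^\prime+w$ with $w\in\mathfrak{u}^\prime$; preservation of $\mathrm{tr}\,\mathrm{ad}$ gives $2c=2$, so $c=1$, and since $\mathfrak{u}^\prime$ is abelian the restriction $\mathrm{ad}(\phi e_1)\vert_{\mathfrak{u}^\prime}$ equals $A^\prime$. Setting $P:=\phi\vert_{\mathfrak{u}}\in\mathrm{GL}(2,\mathbb{R})$ and comparing $\phi([e_1,X])$ with $[\phi e_1,\phi X]$ for $X\in\mathfrak{u}$ yields the intertwining relation $PA=A^\prime P$. Hence $A$ and $A^\prime$ are similar, and in particular $\mathcal{D}=\det A=\det A^\prime=\mathcal{D}^\prime$.

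For the converse I would start from $\mathcal{D}=\mathcal{D}^\prime$ and observe that $A$ and $A^\prime$ then share the characteristic polynomial $t^2-2t+\mathcal{D}$. The key step is the conjugacy classification of $2\times 2$ real matrices by the characteristic polynomial: when the polynomial has distinct roots (real or a complex-conjugate pair, i.e.\ $\mathcal{D}\neq 1$), the real conjugacy class is determined by the polynomial, so $A^\prime=PAP^{-1}$ for some $P\in\mathrm{GL}(2,\mathbb{R})$. The delicate case is the repeated eigenvalue $\mathcal{D}=1$, where there are two conjugacy classes, the scalar matrix $I$ and the Jordan block; this is exactly where the hypothesis $(\xi,\eta)\neq(0,0)$ enters, since $A=I$ forces $(\xi,\eta)=(0,0)$, so under the standing assumption both $A$ and $A^\prime$ are non-scalar and lie in the single Jordan class. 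In every case one obtains $P$ with $A^\prime P=PA$, and I would then define $\phi$ by $\phi(e_1)=e_1^\prime$ and $\phi\vert_{\mathfrak{u}}=P$; the relations $[e_2,e_3]=0$ and $[e_1,X]=AX$ are respected because $\mathfrak{u}^\prime$ is abelian and $A^\prime P=PA$, so $\phi$ is the desired isomorphism. I expect the main obstacle to be precisely this repeated-root case, whose resolution hinges on the exclusion of $(0,0)$ to discard the scalar matrix.
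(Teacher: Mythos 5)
Your proof is correct: reducing the isomorphism problem to real similarity of $A=\mathrm{ad}(e_1)\vert_{\mathfrak{u}}$ (using that $\mathfrak{u}=\ker(\mathrm{tr}\,\mathrm{ad})$ is intrinsic and abelian, that trace preservation forces $c=1$ under the normalization $\mathrm{tr}\,A=2$, and that the hypothesis $(\xi,\eta)\neq(0,0)$ exactly excludes the scalar matrix in the repeated-root case $\mathcal{D}=1$) settles both directions. The paper gives no proof of this proposition — it only cites Milnor — and your argument is essentially the standard one underlying Milnor's classification, so it matches the intended approach.
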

The characteristic equation for the matrix $A$ is $\lambda^2-2\lambda+\mathcal{D}=0$. 
Thus the eigenvalues of $A$ are $1\pm\sqrt{1-\mathcal{D}}$. 
Roughly speaking, this means that the moduli space of 
non-unimodular Lie algebras $\mathfrak{g}(\xi,\eta)$ may be 
decomposed into three kinds of types which are determined by the 
conditions $\mathcal{D}>1$, $\mathcal{D}=1$ and $\mathcal{D}<1$.

The Levi-Civita connection of $(G,g)$ is given by the following table:
\begin{Proposition}\label{Proposition2.1}
\[
\begin{array}{lll}
\nabla^{g}_{e_1}e_{1}=0, & \nabla^{g}_{e_1}e_{2}=\eta e_{3}, & 
\nabla^{g}_{e_1}e_{3}=-\eta e_{2}\\
\nabla^{g}_{e_2}e_{1}=-(1+\xi)e_{2}-\xi\eta e_{3}, 
& \nabla^{g}_{e_2}e_{2}=(1+\xi) e_1, & 
\nabla^{g}_{e_2}e_{3}=\xi\eta e_{1}\\
\nabla^{g}_{e_3}e_{1}=-\xi\eta e_{2}
-(1-\xi)e_{3}, 
& \nabla^{g}_{e_3}e_{2}=\xi\eta 
e_{1} & \nabla^{g}_{e_3}e_{3}=(1-\xi)e_1.
\end{array}
\]
The Riemannian curvature $R$ is given by
\begin{align*}
R^{g}(e_1,e_2)e_1=&
\{\xi\eta^{2}+(1+\xi)^{2}+\xi\eta^{2}(1+\xi)\}e_{2},
\\
R^{g}(e_1,e_2)e_2=&-\{\xi\eta^{2}+(1+\xi)^{2}+\xi\eta^{2}(1+\xi)\}e_{1},
\\
R^{g}(e_1,e_3)e_1=&-\{\xi\eta^{2}-(1-\xi)^{2}+\xi\eta^{2}(1-\xi)\}e_{3},
\\
R^{g}(e_1,e_3)e_3=&\{\xi\eta^{2}-(1-\xi)^{2}+\xi\eta^{2}(1-\xi)\}e_{1},
\\
R^{g}(e_2,e_3)e_2=&\{1-\xi^{2}(1+\eta^{2})\}e_{3},
\\
R^{g}(e_2,e_3)e_3=&-\{1-\xi^{2}(1+\eta^{2})\}e_{2}
\end{align*}
and $R^{g}(e_i,e_j)e_k=0$ if otherwise.

The basis $\{e_1,e_2,e_3\}$ diagonalizes the 
Ricci tensor field $\mathrm{Ric}$.
The principal Ricci curvatures are given by
\begin{align}
\mathrm{Ric}^{g}(e_{1},e_{1})&=-2\{1+\xi^{2}(1+\eta^{2})\}<-2,
\nonumber
\\
\mathrm{Ric}^{g}(e_{2},e_{2})&=-2\{1+\xi(1+\eta^{2})\}<-2,
\\
\mathrm{Ric}^{g}(e_{3},e_{3})
&=-2\{1-\xi(1+\eta^{2})\}.
\nonumber
\end{align}
The scalar curvature $\rho^{g}$ is 
\[
\rho^{g}=-2\{3+\xi^{2}(1+\eta^2)\}<0.
\]
\end{Proposition}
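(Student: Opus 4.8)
The plan is to reduce the entire statement to bracket arithmetic in the fixed orthonormal basis $\{e_1,e_2,e_3\}$ by invoking the Koszul-type identity \eqref{Levi-CivitaRelation}, $\nabla^{g}_{X}Y=\tfrac12[X,Y]+U(X,Y)$, together with the defining relation \eqref{NR} for the symmetric bilinear map $U$. Since the basis is orthonormal, computing $U(e_i,e_j)$ reduces to evaluating the three scalars $2\langle U(e_i,e_j),e_k\rangle=\langle e_i,[e_k,e_j]\rangle+\langle e_j,[e_k,e_i]\rangle$ for $k=1,2,3$, each of which is read off directly from the structure equations \eqref{eq:nonUnibasis}.

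First I would record the brackets explicitly, namely $[e_1,e_2]=(1+\xi)e_2+(1+\xi)\eta\,e_3$ and $[e_1,e_3]=-(1-\xi)\eta\,e_2+(1-\xi)e_3$ (with $[e_2,e_3]=0$), and then feed them into the formula for $U$. This yields, for example, $U(e_2,e_2)=(1+\xi)e_1$, $U(e_3,e_3)=(1-\xi)e_1$, $U(e_2,e_3)=\xi\eta\,e_1$, and $U(e_1,e_2)=\tfrac12\big(-(1+\xi)e_2+(1-\xi)\eta\,e_3\big)$, the remaining values being obtained the same way. Substituting into $\nabla^{g}_{X}Y=\tfrac12[X,Y]+U(X,Y)$ then produces the nine entries of the connection table; for instance, in $\nabla^{g}_{e_1}e_2$ the $e_2$-terms of $\tfrac12[e_1,e_2]$ and $U(e_1,e_2)$ cancel while the $e_3$-terms add to give $\eta\,e_3$, and in $\nabla^{g}_{e_2}e_1$ the symmetric piece $U(e_1,e_2)=U(e_2,e_1)$ combines with $\tfrac12[e_2,e_1]$ to give $-(1+\xi)e_2-\xi\eta\,e_3$.

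With the connection tabulated, the curvature follows by direct substitution into $R^{g}(e_i,e_j)e_k=\nabla^{g}_{e_i}\nabla^{g}_{e_j}e_k-\nabla^{g}_{e_j}\nabla^{g}_{e_i}e_k-\nabla^{g}_{[e_i,e_j]}e_k$, where one extends $\nabla^{g}$ linearly in its lower slot to treat the term $\nabla^{g}_{[e_i,e_j]}e_k$, since $[e_1,e_2]$ and $[e_1,e_3]$ are genuine linear combinations of the $e_i$. Contracting the resulting tensor gives $\mathrm{Ric}^{g}(e_i,e_i)=\sum_{j}\langle R^{g}(e_j,e_i)e_i,e_j\rangle$ and then $\rho^{g}=\sum_i\mathrm{Ric}^{g}(e_i,e_i)$; the off-diagonal Ricci components vanish because the only nonzero curvature blocks are supported on the three coordinate planes, which simultaneously shows that $\{e_1,e_2,e_3\}$ diagonalizes $\mathrm{Ric}^{g}$. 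The sign assertions are then immediate from the normalization $\xi,\eta\ge0$, which makes each bracketed factor $1+\xi^{2}(1+\eta^{2})$, $1+\xi(1+\eta^{2})$ and $3+\xi^{2}(1+\eta^{2})$ positive and hence the corresponding Ricci and scalar quantities negative.

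The computation is routine throughout; the only delicate point is the curvature step, where the nontrivial brackets force one to carry the $\nabla^{g}_{[e_i,e_j]}$ contribution and where the many $\xi\eta$ cross-terms are easy to combine incorrectly. I expect this bookkeeping, rather than any conceptual difficulty, to be the main obstacle, and I would organize it plane by plane --- treating the $(e_1,e_2)$-, $(e_1,e_3)$- and $(e_2,e_3)$-planes separately --- so that the cross-terms can be tracked and cancelled reliably.
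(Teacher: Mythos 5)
Your proposal is correct and follows exactly the route the paper intends (the paper states this proposition without proof as a direct computation): the intermediate values $U(e_i,e_j)$, the nine connection entries, the curvature components, and the Ricci and scalar contractions you describe all check out against the stated formulas, including the vanishing of $R^{g}(e_1,e_3)e_2$-type terms that justifies diagonality of $\mathrm{Ric}^{g}$. One harmless caveat: your argument establishes negativity of the principal Ricci curvatures, which is all it can, since the paper's strict bounds $\mathrm{Ric}^{g}(e_1,e_1)<-2$ and $\mathrm{Ric}^{g}(e_2,e_2)<-2$ actually require $\xi>0$ (at $\xi=0$, e.g.\ the hyperbolic space of Example \ref{Example2.2}, both equal $-2$).
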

\begin{Remark}
{\rm The sign of $\mathrm{Ric}^{g}(e_{3},e_{3})$ can be positive, null or negative.
In fact, if $\mathcal{D}<0$, then $\mathrm{Ric}^{g}(e_{3},e_{3})>0$. 
If $\mathcal{D}\geq 0$, then $\mathrm{Ric}^{g}(e_{3},e_{3})
=0$ is possible. In case $\mathcal{D}>0$, there exists a 
metric of strictly negative curvature. Moreover if 
$\mathcal{D}>1$, there exists a metric of constant negative curvature.
}
\end{Remark}


\subsection{Explicit models}\label{sec:8.2}
Here we give explicit matrix group models of
non-unimodular Lie groups.  

The simply connected Lie 
group $\widetilde{G}$ corresponding to 
the non-unimodular Lie algebra $\mathfrak{g}$ with 
structure constants $(\xi,\eta)$
is given explicitly by
\[
\widetilde{G}(\xi,\eta)=
\left\{
\left.
\left(
\begin{array}{cccc}
1 & 0 & 0 & x\\
0 & \alpha_{11}(x) & \alpha_{12}(x) & y\\
0 & \alpha_{21}(x) & \alpha_{22}(x) & z\\
0 & 0 & 0 &1
\end{array}
\right)
\
\right
\vert
\
x,y,z \in \mathbb{R}
\right\},
\]
where $\alpha_{ij}(x)$ is the $(i,j)$-entry of 
$\exp (x A)$. This shows that 
$\widetilde{G}(\xi,\eta)$ is the semi-direct product
$\mathbb{R}\ltimes\mathbb{R}^2$ 
with multiplication
\begin{equation}\label{semi-direct}
(x,y,z)\cdot (x^\prime,y^\prime,z^\prime)=
(x+x^\prime,y+\alpha_{11}(x)y^\prime+\alpha_{12}(x)z^\prime,
z+\alpha_{21}(x)y^\prime+\alpha_{22}(x)z^\prime).
\end{equation}
The Lie algebra of $\widetilde{G}(\xi,\eta)$ is spanned by the basis
\[
e_{1}=
\left(
\begin{array}{cccc}
0 & 0 & 0 & 1\\
0 & 1+\xi & -(1-\xi)\eta & 0\\
0 & (1+\xi)\eta & 1-\xi & 0\\
0 & 0 & 0 &0
\end{array}
\right),
\ \
e_{2}=
\left(
\begin{array}{cccc}
0 & 0 & 0 & 0\\
0 & 0 & 0 & 1\\
0 & 0 & 0 & 0\\
0 & 0 & 0 &0
\end{array}
\right),
\ \
e_{3}=
\left(
\begin{array}{cccc}
0 & 0 & 0 & 0\\
0 & 0 & 0 & 0\\
0 & 0 & 0 & 1\\
0 & 0 & 0 &0
\end{array}
\right).
\]
This basis satisfies the commutation relations
\[
[e_1,e_2]=(1+\xi)\{e_{2}+\eta e_{3}\},\ \
[e_2,e_3]=0,\ \ 
[e_3,e_1]=(1-\xi)\{\eta e_2-e_{3}\}.
\]
Thus the Lie algebra of 
$\widetilde{G}(\xi,\eta)$
is the non-unimodular Lie algebra 
$\mathfrak{g}=\mathfrak{g}(\xi,\eta)$ with structure constants $(\xi,\eta)$. 
The left invariant vector fields corresponding to 
$e_1$, $e_2$ and $e_3$ are
\[
e_1=\frac{\partial}{\partial x},\ \ 
e_2=\alpha_{11}(x)\frac{\partial}{\partial y}+
\alpha_{21}(x)\frac{\partial}{\partial z},
\ \
e_3=\alpha_{12}(x)\frac{\partial}{\partial y}
+\alpha_{22}(x)\frac{\partial}{\partial z}.
\]
\begin{Example}[$\xi=0$, $\mathcal{D}\geq 1$]\label{Example2.2}{\rm 
The simply connected Lie group 
$\widetilde{G}(0,\eta)$ is isometric to the hyperbolic $3$-space 
$\mathbb{H}^3=\mathbb{H}^3(-1)$ of curvature $-1$
and given 
explicitly by
\[
\widetilde{G}(0,\eta)=
\left\{
\left(
\begin{array}{cccc}
1 & 0 & 0 & x\\
0 & e^{x}\cos(\eta x) & -e^{x}\sin(\eta x) & y\\
0 & e^{x}\sin(\eta x) & e^{x}\cos(\eta x) & z\\
0 & 0 & 0 &1
\end{array}
\right)
\
\biggr
\vert
\
x,y,z \in \mathbb{R}
\right\}. 
\]
The left invariant metric is 
$dx^2+e^{-2x}(dy^2+dz^2)$. 
Thus $\widetilde{G}(0,\eta)$
is the warped product model of $\mathbb{H}^3$.
In fact, put $w=e^{x}$ then the left invariant metric of 
$\widetilde{G}(0,\eta)$ 
is rewritten as 
the Poincar{\'e} metric
\[
\frac{dy^2+dz^2+dw^2}{w^2}.
\]
The Milnor 
invariant of 
$\widetilde{G}(0,\eta)$ is $\mathcal{D}=1+\eta^2\geq 1$.
}
\end{Example}

\begin{Example}[$\eta=0$, 
$\mathcal{D}\leq1$]\label{Example2.3}{\rm
For each $\xi\geq 0$, $\widetilde{G}(\xi,0)$ is given by:
\[
\widetilde{G}(\xi,0)=\left\{
\left(
\begin{array}{cccc}
1 & 0 & 0 & x\\
0 & e^{(1+\xi)x} & 0 & y\\
0 & 0 & e^{(1-\xi)x} & z\\
0 & 0 & 0 &1
\end{array}
\right)
\
\biggr
\vert
\
x,y,z \in \mathbb{R}
\right\}.
\]
The left invariant Riemannian metric is given explicitly by
\[
dx^{2}+e^{-2(1+\xi)x}dy^{2}+e^{-2(1-\xi)x}dz^{2}.
\]
The Milnor invariant is $\mathcal{D}=1-\xi^2\leq 1$.

Here we observe locally symmetric examples:
\begin{itemize}
\item If $\xi=0$ then $\widetilde{G}(0,0)$ is a warped product model of 
hyperbolic 3-space $\mathbb{H}^3(-1)$. 
\item If $\xi=1$ then $\widetilde{G}(1,0)$ is isometric to the Riemannian product 
$\mathbb{H}^{2}(-4)\times \mathbb{R}$ of hyperbolic 
plane $\mathbb{H}^{2}(-4)$ of curvature $-4$ and the 
real line.
In fact, via the coordinate change $(u,v)=(2y,e^{2x})$, the metric is rewritten as
\[
g=\frac{du^2+dv^2}{4v^2}+dz^2.
\]
\end{itemize}
}
\end{Example}

\begin{Example}[$\xi=1$, 
$\mathcal{D}=0$]\label{Example2.4}{\rm
Assume that $\xi=1$. Then $\widetilde{G}(1,\eta)$ is given explicitly by
\[
\widetilde{G}(1,\eta)=
\left\{
\left.
\left(
\begin{array}{cccc}
1 & 0 & 0 & x\\
0 & e^{2x} & 0 & y\\
0 & \eta(e^{2x}-1) & 1 & z\\
0 & 0 & 0 &1
\end{array}
\right)
\
\right
\vert
\
x,y,z \in \mathbb{R}
\right\}.
\]
The left invariant metric is 
\[
dx^2+\{e^{-4x}+\eta^2(1-e^{-2x})^2\}dy^2-2\eta(1-e^{-2x})dydz+dz^2.
\]
The non-unimodular Lie group $\widetilde{G}(1,\eta)$ has sectional curvatures
\[
K^{g}_{12}=-3\eta^{2}-4,
\quad 
K^{g}_{13}=K^{g}_{23}=\eta^{2},
\]
where $K^{g}_{ij}$ ($i\not=j$) denote the 
sectional curvatures of the planes spanned
by vectors $e_i$ and $e_j$. 
One can check that $\widetilde{G}(1,\eta)$ is isometric to
the so-called 
\emph{Bianchi-Cartan-Vranceanu space} $M^{3}(-4,\eta)$
with $4$-dimensional isometry group and isotropy subgroup $\mathrm{SO}_2$:
\[
M^{3}(-4,\eta)=
\left(
\{(x,y,z)\in \mathbb{R}^{3}
\
\vert 
\
x^2+y^2<1\},ds^2
\right),
\]
\[
ds^2=\frac{dx^2+dy^2}{(1-x^2-y^2)^{2}}+
\left(
dz+ \frac{\eta(ydx-xdy)}{1-x^2-y^2}
\right)^{2}
\]
with $\eta\geq 0$.  
The family $\{\widetilde{G}(1,\eta)\}_{\eta\geq 0}$ is 
characterized by the condition $\mathcal{D}=0$. 
In particular $M^{3}(-4,\eta)$ with \textit{positive} $\eta$ is isometric to 
the universal covering $\widetilde{\mathrm{SL}}_2\mathbb{R}$
of the special linear group equipped with the above metric 
but 
\emph{not
isomorphic} to $\widetilde{\mathrm{SL}}_2\mathbb{R}$
as Lie groups. We here note that $\mathrm{SL}_2\mathbb{R}$ is a 
\emph{unimodular} Lie group, while $\widetilde{G}(1,\eta)$ is
\emph{non-unimodular}. 
}
\end{Example}
For more informations on left invariant metrics on 
3-dimensional non-unimodular Lie groups, we refer to
\cite{IN}.

\begin{Example}
{\rm 
Here we consider the product group 
$G(\nu_1,\nu_2)\times\mathbb{R}$ 
of the $2$-dimensional solvable Lie group 
$G(\nu_1,\nu_2)$ studied in Section 
\ref{sec:6.3} and the abelian group $\mathbb{R}=(\mathbb{R},+)$.
The product Lie group 
is isomorphic to
\[
L=\left\{
\left.
\left(
\begin{array}{ccc}
y & x &0
\\ 
0 & 1 & 0
\\ 0 & 0 & e^{z}
\end{array}
\right)
\>\right\vert
\>
x,y,z\in\mathbb{R},\>y>0
\right\}.
\]
The product metric
\[
g=\frac{\nu_1^2dx^2+\nu_2^2dy^2}{y^2}+dz^2
\]
is left invariant on $L$. 
The Lie algebra 
\[
\mathfrak{l}=\left\{
\left.
\left(
\begin{array}{ccc}
v/\nu_2 & u/\nu_1 &0
\\ 
0 & 0 & 0
\\ 0 & 0 & w
\end{array}
\right)
\>\right\vert
\>
u,v,w\in\mathbb{R}
\right\}
\]
of $L$ is spanned by the orthonormal basis
\[
e_1
=
\left(
\begin{array}{ccc}
1/\sqrt{\nu_2} & 0 &0
\\ 
0 & 0 & 0
\\ 0 & 0 & 0
\end{array}
\right),
\quad 
e_2=\left(
\begin{array}{ccc}
0 & 0 &0
\\ 
0 & 0 & 0
\\ 0 & 0 & 1
\end{array}
\right),
\quad 
e_3=
\left(
\begin{array}{ccc}
0 & 1/\sqrt{\nu_1} &0
\\ 
0 & 0 & 0
\\ 0 & 0 &0
\end{array}
\right)
\]
with commutation relations
\[
[e_1,e_2]=0, 
\quad 
[e_2,e_3]=0,
\quad
[e_3,e_1]=-\frac{1}{\nu_2}e_3.
\]
This shows that $L$ is non-unimodular. 
Indeed, the unimodular kernel is spanned by
$e_1$ and $e_2$. The representation matrix $A$ of $\mathrm{ad}(e_3)$ 
relative to $\{e_1,e_2\}$ has 
$\mathrm{tr}\>A=1/\nu_2$ and 
$\det A=0$. 
Thus $\mathrm{tr}\>A=2$ when and only when $\nu_2=1/2$.

The Levi-Civita connection 
is described as
\[
\nabla^{g}_{e_1}e_1=
\nabla^{g}_{e_1}e_2=
\nabla^{g}_{e_1}e_3=0,
\quad 
\nabla^{g}_{e_2}e_1=
\nabla^{g}_{e_2}e_2=
\nabla^{g}_{e_2}e_3=0,
\]
\[
\nabla^{g}_{e_3}e_1=-\frac{1}{\nu_2}e_3,
\quad 
\nabla^{g}_{e_3}e_2=0,
\quad 
\nabla^{g}_{e_3}e_3=\frac{1}{\nu_2}e_1.
\]
Let us classify the left invariant conjugate symmetric 
statistical structures associated the product metric $g$. 
The total symmetry of $\nabla^{g}K$ is the systems:
\begin{align}
    &-K_{11}^{1}+2K_{33}^{1}=0,  \label{eq:sol1} \\
    &-K_{12}^{1}+K_{33}^{2}=0,  \label{eq:sol2} \\
    &-2K_{13}^{1}+K_{33}^{3}=0,  \label{eq:sol3} \\
    &K_{13}^{1}=K_{22}^{1}=K_{23}^{1}=K_{23}^{2}=0. \label{eq:sol4}
\end{align}
From \eqref{eq:sol3} and \eqref{eq:sol4} we get $K_{33}^3=0$. 
The product group $G(\nu_1,\nu_2)\times\mathbb{R}$ 
admits non-trivial conjugate symmetric left invariant statistical structure whose skewness field is parametrized as
\begin{align*}
K_{11}^{1}=2K_{33}^1=2\sqrt{2}\alpha,\quad K_{12}^1=K_{33}^{2}=\beta,\quad 
K_{22}^2=\gamma,
\quad \alpha,\beta,\gamma\in\mathbb{R}.
\end{align*}
Then the skewness operator $K=K^{(\alpha,\beta,\gamma)}$ has the form 
$K^{(\alpha,\beta,\gamma)}=\alpha K^{(1,0,0)}+\beta K^{(0,1,0)}+\gamma K^{(0,0,1)}$. 
One can see that $K^{(\alpha,0,0)}$ is the $\alpha$-connection of $K^{(1,0,0)}$. 
Moreover the restriction of  $K^{(\alpha,0,0)}$ to $\mathfrak{g}(\nu_1,\nu_2)$ coincides 
with the skewness operator of the $\alpha$-connection of $G(\nu_1,\nu_2)$. 
As a result, the product Lie group 
$G(1,\sqrt{2})\times\mathbb{R}$ of the statistical Lie group 
of the normal distribution and the real line $(\mathbb{R},+)$ admits 
equipped with the product metric admits left invariant 
compatible connections which are conjugate symmetric. 
Note that $G(1,\sqrt{2})\times\mathbb{R}$ does not satisfy the 
normalization $\mathrm{tr}\>A=2$ (indeed, $\mathrm{tr}\>A=1/\sqrt{2}$). 
}
\end{Example}

\subsection{Conjugate-Symmetry}\label{sec:8.3}
In this subsection we consider conjugate-symmetry $3$-dimensional non-unimodular statistical Lie groups.

\begin{Theorem}\label{thm:nonuni}
    Let $G$ be a $3$-dimensional non-unimodular Lie group equipped with a 
	left invariant statistical structure $(g,\nabla)$ such that 
	$\nabla\ne\nabla^g$. If $(G,g,\nabla)$ is conjugate-symmetric then $G$ 
	is locally isomorphic to the solvable Lie group model 
    of the hyperbolic $3$-space or 
    the product Lie group $\mathbb{H}^2(-4)\times\mathbb{R}$ of 
    the solvable Lie group model of the hyperbolic plane and 
    the real line.
\end{Theorem}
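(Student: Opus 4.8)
The plan is to mirror the proof of Theorem~\ref{thm:uni}, substituting the normalized non-unimodular data of Section~\ref{sec:8.1} for the unimodular data. I work in the orthonormal basis $\{e_1,e_2,e_3\}$ with commutation relations \eqref{eq:nonUnibasis}, and I encode the skewness operator by the array $K_{ijk}:=\langle K(e_i)e_j,e_k\rangle$. By the identities $K(X)Y=K(Y)X$ and $\langle K(X)Z,Y\rangle=\langle K(Y)Z,X\rangle$ of Section~\ref{sec:5.1}, this array is invariant under every permutation of $i,j,k$, so $K$ is carried by ten real parameters. By Lemma~\ref{lem:symmetric}, $(g,\nabla)$ is conjugate symmetric if and only if $\nabla^{g}C$ is totally symmetric, where $C(X,Y,Z)=\langle K(X)Y,Z\rangle$; and since $C$ is left invariant, $(\nabla^{g}_{e_i}C)(e_j,e_k,e_l)=-C(\nabla^{g}_{e_i}e_j,e_k,e_l)-C(e_j,\nabla^{g}_{e_i}e_k,e_l)-C(e_j,e_k,\nabla^{g}_{e_i}e_l)$, into which I substitute the Levi-Civita coefficients of Proposition~\ref{Proposition2.1}.

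First I would extract the homogeneous linear system equivalent to conjugate symmetry, namely the relations $(\nabla^{g}_{e_i}C)(e_j,e_k,e_l)=(\nabla^{g}_{e_j}C)(e_i,e_k,e_l)$ (symmetry in $j,k,l$ is automatic, since $\nabla^{g}$ of a symmetric tensor is symmetric in the original slots). Its coefficients are polynomials in $\xi$ and $\eta$: the ``diagonal'' relations are governed by the factors $1+\xi$ and $1-\xi$ coming from $\nabla^{g}_{e_2}e_2=(1+\xi)e_1$ and $\nabla^{g}_{e_3}e_3=(1-\xi)e_1$, while $\eta$ couples the $e_2$- and $e_3$-directions through the rotational terms $\nabla^{g}_{e_1}e_2=\eta e_3$, $\nabla^{g}_{e_1}e_3=-\eta e_2$ together with the mixed coefficients $\xi\eta$. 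This is the non-unimodular counterpart of the systems \eqref{eq:a-System}--\eqref{eq:e2}, but it lacks the symmetry among the three directions that streamlined the unimodular computation.

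The heart of the argument is a case analysis on $(\xi,\eta)$, which I would organize through the Milnor invariant $\mathcal{D}=(1-\xi^2)(1+\eta^2)$. When $\eta=0$ the connection decouples along $e_2$ and $e_3$ with weights $1+\xi$ and $1-\xi$, and the diagonal relations should force $K=0$ except at the degeneracies $\xi=0$ (where the two weights coincide and $(G,g)=\mathbb{H}^3$, Example~\ref{Example2.2}) and $\xi=1$ (where $1-\xi=0$, so $e_3$ becomes central and $(G,g)=\mathbb{H}^2(-4)\times\mathbb{R}$, Example~\ref{Example2.3}). When $\eta\neq0$ the rotational and $\xi\eta$ couplings should over-determine the system and force $K=0$ unless $\xi=0$, the exceptional case $\xi=0$ recovering the hyperbolic model of Example~\ref{Example2.2}. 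In each surviving case I would display the explicit parametrized family of admissible $K$ exactly as in Theorem~\ref{thm:uni}, checking that it contains essential solutions with $\nabla\neq\nabla^{g}$, and I would match the surviving parameters to the two examples by their Milnor invariant and isometry type.

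The main obstacle will be the bookkeeping when $\eta\neq0$: because the rotation $\nabla^{g}_{e_1}e_2=\eta e_3$ mixes the two eigendirections of $\mathrm{ad}(e_1)$, one cannot simply read off which components vanish as in the unimodular computation, and a careful elimination is required to rule out nontrivial $K$ for $\xi\neq0$ while preserving the genuine solution branch at $\xi=0$. A secondary subtlety is to confirm that the surviving families indeed contain non-Levi-Civita connections and to record their $\alpha$-connection structure, thereby verifying that the solvable model of $\mathbb{H}^3$ and the product $\mathbb{H}^2(-4)\times\mathbb{R}$ are exactly the non-unimodular Lie groups carrying a non-trivial conjugate symmetric left invariant statistical structure.
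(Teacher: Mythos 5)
Your route is exactly the paper's: in the normalized basis \eqref{eq:nonUnibasis} the conjugate symmetry of $(g,\nabla)$ is linearized, via Lemma \ref{lem:symmetric} and Proposition \ref{Proposition2.1}, into a homogeneous linear system in the ten symmetric components of $K$ with coefficients polynomial in $(\xi,\eta)$, and the theorem is a case analysis showing the system has nontrivial solutions only for $\mathfrak{g}(0,\eta)$ and $\mathfrak{g}(1,0)$. Your predicted outcome of each case is also correct: for $\eta=0$ the components are killed unless $\xi=0$ or $\xi=1$, and for $\eta\neq 0$ only $\xi=0$ survives, with explicit $\alpha$-connection-type families in the surviving cases.

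There is, however, one concrete gap: your claim that for $\eta\neq 0$ the ``rotational and $\xi\eta$ couplings over-determine the system and force $K=0$ unless $\xi=0$'' is precisely the hard step, and it is not a matter of over-determination. The system decouples into a block in $(K_{11}^1,K_{22}^1,K_{23}^1,K_{33}^1)$ and a block in $(K_{12}^1,K_{13}^1,K_{22}^2,K_{23}^2,K_{33}^2,K_{33}^3)$; the first block yields the scalar relation $\xi\eta K_{22}^1=0$, and it is this relation (not the Milnor invariant, which plays no role in the case split) that drives the trichotomy $\xi=0$, $\eta=0$, or $K_{22}^1=0$. In the remaining case the $6\times 6$ coefficient determinant of the second block factors as
\[
\det\mathcal{A}=(1-\xi)(1+\xi)(1+\eta^2)(1-\xi^2-\xi^2\eta^2)(1-\xi^2+9\eta^2-\xi^2\eta^2),
\]
and the last two factors vanish along curves in the $(\xi,\eta)$-quadrant containing points with $\xi,\eta>0$. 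So nontrivial solutions with $\eta\neq 0$, $\xi\neq 0$ are \emph{not} excluded by any genericity or counting argument; they must be eliminated by hand. The paper does this by first checking that on the locus $\det\mathcal{A}=0$ one has $\det\mathcal{B}=9-9\xi^2+\eta^2-9\xi^2\eta^2=0$ if and only if $\eta=0$, so that $\eta\neq0$ forces $K_{12}^1=K_{13}^1=0$, and then extracting from \eqref{b2}, \eqref{b3}, \eqref{e1} a $3\times 3$ subsystem in $(K_{22}^2,K_{23}^2,K_{33}^2)$ with determinant $\xi\eta(1+\xi)(3+\xi)(1+\eta^2)$, nonzero for $\xi>0$, which together with \eqref{d2} gives $K=0$; since $\xi=0$ is incompatible with $\det\mathcal{A}=0$, one concludes $\eta=0$ and then $\xi=1$. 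Without an argument of this kind your sketch would leave the branches $1-\xi^2-\xi^2\eta^2=0$ and $1-\xi^2+9\eta^2-\xi^2\eta^2=0$ unaddressed. (A minor related point: in the surviving case $\xi=0$, $\eta\neq0$, the nontrivial family $K_{11}^1=2K_{22}^1=2K_{33}^1=\alpha$ lives entirely in the first block, the second block vanishing identically because $\det\mathcal{A}\neq 0$ there.)
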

\begin{proof}
Take the basis $\{e_1,e_2,e_3\}$ of the Lie algebra 
$\mathfrak{g}$ of $G$ satisfying \eqref{eq:nonUnibasis} 
and express the skewness operator $K$ as 
\[
K(e_i)e_j=\sum_{\ell=1}^{3}K_{ij}^{\ell}e_{\ell}, 
\quad i,j=1,2,3.
\]
Then the total symmetry of $\nabla^gK$ is the system
    
    \begin{align}
        -(1+\xi)K_{11}^{1}+2(1+\xi)K_{22}^{1}+2\eta(1+\xi) K_{23}^{1}&=0, \label{a2} \\
        -(1-\xi)K_{11}^{1}-2\eta(1-\xi)K_{23}^{1}+2(1-\xi)K_{33}^{1}&=0, \label{c3}
    \end{align}
    \begin{align}
        -\xi\eta K_{11}^{1}-\eta K_{22}^{1}+2(1+\xi)K_{23}^{1}+\eta(1+2\xi)K_{33}^{1}&=0, \label{a3} \\
        -\xi\eta K_{11}^{1}+\eta(2\xi-1)K_{22}^{1}+2(1-\xi)K_{23}^{1}+\eta K_{33}^{1}&=0, \label{c2}
    \end{align}
    \begin{align}
        (1+\xi)K_{23}^{1}&=\xi \eta K_{22}^{1}, \label{e2} \\
        (1-\xi)K_{22}^{1}&=(1+\xi)K_{33}^{1}, \label{e3} \\
        (1-\xi)K_{23}^{1}&=\xi\eta K_{33}^{1}. \label{f3}
    \end{align}
    \begin{align}
        3(1+\xi)K_{12}^{1}+\eta(1+3\xi)K_{13}^{1}&=0,\label{a1} \\
        \eta(3\xi-1)K_{12}^{1}+3(1-\xi)K_{13}^{1}&=0, \label{c1}
    \end{align}
    \begin{align}
        -2(1+\xi)K_{12}^{1}+(1+\xi)K_{22}^{2}+\eta(3+\xi)K_{23}^{2}&=0,\label{b2} \\
        -2(1-\xi)K_{13}^{1}-\eta(3-\xi)K_{33}^{2}+(1-\xi)K_{33}^{3}&=0,\label{d3}
    \end{align}
    \begin{align}
        -\xi\eta K_{12}^{1}-(1+\xi)K_{13}^{1}-\eta K_{22}^{2}+(1+\xi)K_{23}^{2}+\eta(2+\xi)K_{33}^{2}&=0, \label{b3} \\
        -(1-\xi)K_{12}^{1}-\xi\eta K_{13}^{1}+\eta(\xi-2)K_{23}^{2}+(1-\xi)K_{33}^{2}+\eta K_{33}^{3}&=0, \label{d2}
    \end{align}
    \begin{align}
        -\xi\eta K_{12}^{1}+(1+\xi)K_{13}^{1}+\xi\eta K_{22}^{2}-2\xi K_{23}^{2}-\xi\eta K_{33}^{2}&=0, \label{e1} \\
        -(1-\xi)K_{12}^{1}+\xi\eta K_{13}^{1}+\xi\eta K_{23}^{2}-2\xi K_{33}^{2}-\xi\eta K_{33}^{3}&=0. \label{f1}
    \end{align}
    Note that the equations \eqref{a2}-\eqref{f3} and the equations \eqref{a1}-\eqref{f1} are independent. 
	First we consider the system \eqref{a2}-\eqref{f3} and determine 
	$K_{11}^{1}, K_{22}^1, K_{23}^{1}$ and $K_{33}^{1}$. From \eqref{a3} and \eqref{c2} we get
    \begin{align}
        -2\xi(\eta K_{22}^{1}+2K_{23}^{1}+\eta K_{33}^{1})=0. \label{eq1}
    \end{align}
    Since $\xi \neq 0$, from \eqref{e2} and \eqref{e3} 
	we have $K_{23}^{1}=\frac{\xi\eta}{1+\xi}K_{22}^{1}$ and $K_{33}^{1}=\frac{1-\xi}{1+\xi}K_{22}^{1}$. 
	Substituting $K_{23}^1$ and $K_{33}^1$ into the equation \eqref{eq1}, we obtain
    \begin{align*}
        -4\xi \eta K_{22}^1=0.
    \end{align*}
    This implies that  $\xi=0$, $\eta=0$ or $K_{22}^1=0$.
    \begin{enumerate}

        \item The case $\xi=0$: In this case $\mathfrak{g}=\mathfrak{g}(0,\eta)$ is the Lie algebra 
		of $\mathbb{H}^3(-1)$ (see Example \ref{Example2.2}).
		From \eqref{e2}-\eqref{f3}, we obtain $K_{23}^1=0$ and $K_{22}^1=K_{33}^1$. 
		Since $K_{23}^1=0$, we obtain $K_{11}^1=2K_{22}^1=2K_{33}^1$ 
		from \eqref{a2} and \eqref{c3}. In this case, the equations 
		\eqref{a3} and \eqref{c2} hold for any $\eta$. Thus $K_{11}^1, K_{22}^1,$ and $K_{33}^1$ are parametrized as
        \begin{align}
            K_{11}^1=2K_{22}^1=2K_{33}^1=\alpha, \quad \alpha \in \mathbb{R}. \label{xizero}
        \end{align}
		On the other hand, from \eqref{a1}--\eqref{f1}, we deduce that 
		\[
		K_{12}^{1}=K_{13}^{1}=K_{22}^{2}=K_{23}^{2}=K_{33}^{2}=K_{33}^{3}=0.
		\]

		\medskip
		

		



\medskip

The skewness operator $K^{(\alpha)}$ determined by \eqref{xizero} has the 
form $K^{(\alpha)}=\alpha\,K^{(1)}$. 
Thus 
$\nabla^{g}-K^{(\alpha)}/2$ is the $\alpha$-connection of $\nabla^{g}-K^{(1)}/2$.

        \item The case $\eta=0$ and $\xi\neq 0$: 
		In this case $\mathfrak{g}=\mathfrak{g}(\xi,0)$ is the Lie algebra 
		of $\mathbb{H}^2(-4)\times\mathbb{R}$ (see Example \ref{Example2.3}). 
        From \eqref{e2}, we obtain $K_{23}^1=0$ and hence 
		the equations \eqref{a3}, 
		\eqref{c2} and \eqref{f3} are automatically satisfied. 
		Moreover the equations \eqref{a2} and \eqref{c3} are reduced to
        \begin{align*}
            K_{11}^1-2K_{22}^1=0, \quad (1-\xi)(K_{11}^1-2K_{33}^1)=0.
        \end{align*}
        If $\xi \neq 1$, then we have 
		$K_{11}^1=2K_{22}^1=2K_{33}^1$.
	 In addition we get 
	 $\xi(K_{22}^{1}+K_{33}^{1})=0$ from 
	 \eqref{e3}. Since we assumed that $\xi\not=0$, we have 
	 $K_{22}^{1}+K_{33}^{1}=0$. Hence 
	 $K_{11}^{1}=K_{22}^{1}=K_{33}^{1}=0$. 
	 
	 Next, from \eqref{b2}, \eqref{d3} and 
	 \eqref{b3}, we get 
	\[
	K_{22}^{2}=2K_{12}^{1},
	\quad 
	K_{33}^{3}=2K_{13}^{1},
	\quad 
	K_{23}^{2}=K_{13}^{1}.
	\]
	The equations \eqref{d2} and \eqref{e1} imply that 
	$K_{23}^{2}=K_{33}^{2}=0$. Thus we conclude that 
	$K=0$. This is a contradiction. Hence $\xi=1$.

%

Now let us consider the Lie algebra $\mathfrak{g}(1,0)$. 
Then from \eqref{e3}, \eqref{a1} and \eqref{b2}, we get
\[
K_{33}^{1}=K_{12}^{1}=K_{22}^{2}=0.
\]
Next, equations \eqref{b3} and \eqref{e1} implies that 
$K_{13}^{1}=K_{23}^{2}$.
Finally \eqref{f1} implies $K_{33}^{2}=0$. 
Henceforth, $K$ has nontrivial components
        \begin{align}
            K_{11}^1=2K_{22}^1=\alpha, \quad 
			K_{13}^{1}=K_{23}^{2}=\beta, 
			\quad 
			K_{33}^{3}=\gamma,
			\quad 
			\alpha,\beta,\gamma\in\mathbb{R}. \label{xione}
        \end{align}

\medskip
		
\medskip

        \item The case $K_{22}^1=0$:
        From \eqref{e2} and \eqref{e3}, $K_{23}^1=K_{33}^1=0$. Then we obtain $K_{11}^1=0$ from \eqref{a2}.
        
    \end{enumerate}
    
Next, we consider the system \eqref{a1}-\eqref{f1} and determine 
$K_{12}^1$, $K_{13}^1$, $K_{22}^2$, $K_{23}^2$, 
$K_{33}^2$ and $K_{33}^3$. 
Set the column vectors $\Vec{k}=(K_{12}^{1},K_{13}^{1},K_{22}^{2},
K_{23}^{2},K_{33}^{2},K_{33}^{3})^T$ and 
$\Vec{k}_2=(K_{12}^{1},K_{13}^{1})^T$. 
Then the system \eqref{b2}-\eqref{f1} [resp. \eqref{a1}-\eqref{c1}] 
is expressed as $\mathcal{A}\Vec{k}_1=\Vec{0}$ 
[resp. $\mathcal{B}\Vec{k}_2=\Vec{0}$]. 
Here the coefficient matrices $\mathcal{A}$ and 
$\mathcal{B}$ are given by
        \begin{align}
            \mathcal{A}&:=
                \begin{pmatrix}
                    -2(1+\xi) & 0 & 1+\xi & \eta(3+\xi) & 0 & 0 \\
                    0 & -2(1-\xi) & 0 & 0 & -\eta(3-\xi) & 1-\xi \\
                    -\xi\eta & -(1+\xi) & -\eta & 1+\xi & \eta(2+\xi) & 0 \\
                    -(1-\xi) & -\xi\eta & 0 & \eta(\xi-2) & 1-\xi & \eta \\
                    -\xi\eta & 1+\xi & \xi\eta & -2\xi & -\xi\eta & 0 \\
                    -(1-\xi) & \xi\eta & 0 & \xi\eta & -2\xi & -\xi\eta
                \end{pmatrix}
                ,\\
            \mathcal{B}&:=
                \begin{pmatrix}
                    3(1+\xi) & \eta(1+3\xi) \\
                    \eta(3\xi-1) & 3(1-\xi) 
                \end{pmatrix}, 
        \end{align}
		respectively. 
		It is easy to see that if $\det \mathcal{A}\neq 0$ [resp. $\det \mathcal{B}\neq 0$], 
		then the coefficients $K_{12}^1, K_{13}^1, K_{22}^2, K_{23}^2, K_{33}^2,$ and $K_{33}^3$ 
		[resp. $K_{12}^1$ and $K_{13}^1$] are equal to $0$. Since we assumed that $K\not=0$,  
		we deduce that $\det \mathcal{A}=0$. Here we compute
        \begin{align}
            0=\det \mathcal{A} = (1-\xi)(1+\xi)(1+\eta^2)(1-\xi^2-\xi^2\eta^2)(1-\xi^2+9\eta^2-\xi^2\eta^2),
        \end{align}
        and from $\xi,\eta\ge 0$ we obtain 
        \begin{align}
            1-\xi=0, \quad 1-\xi^2-\xi^2\eta^2=0 \quad\textrm{or}\quad 1-\xi^2+9\eta^2-\xi^2\eta^2=0. \label{condition}
        \end{align}
         Since $\det \mathcal{B} = 9-9\xi^2+\eta^2-9\xi^2\eta^2$, in all cases $\det \mathcal{B}=0$ if and only if $\eta=0$. 
		 

Let us assume that $\eta\not=0$. 
Then, since $K_{12}^1=K_{13}^1=0$, the equations \eqref{b2}, \eqref{b3} and \eqref{e1} are reduced to
\[
\left(
\begin{array}{ccc}
1+\xi & \eta(3+\xi) & 0\\
-\eta & 1+\xi & \eta(2+\xi)\\
\xi\eta & -2\xi & -\xi\eta
\end{array}
\right)
\left(
\begin{array}{c}
K_{22}^{2}
\\
K_{23}^{2}
\\
K_{33}^{2}
\end{array}
\right)
=\left(
\begin{array}{c}
0
\\
0
\\
0
\end{array}
\right).
\]
The determinant of the coefficient matrix is $\xi\eta(1+\xi)(3+\xi)(1+\eta^2)$ 
which is non-zero if and only if $\xi>0$  because $\xi\ge 0$ and $\eta\neq 0$. 
Thus if $\xi>0$, we have $K_{22}^2=K_{23}^2=K_{33}^2=0$. In this case, 
from \eqref{d2} we have $K_{33}^3=0$. Hence $K=0$. This contradicts to the assumption 
$K\not=0$. Thus we have $\xi=0$. However, $\xi=0$ contradicts the equations \eqref{condition}. 
Thus we conclude that $\eta=0$.

%
%

 
Now let us consider the case $K_{22}^{1}=0$ and $\eta=0$. 
Considering the equations \eqref{condition} we obtain $\xi=1$. Thus the Lie algebra 
$\mathfrak{g}(1,0)$ is the Lie algebra of $\mathbb{H}^{2}(-4)\times\mathbb{R}$ 
(see Example \ref{Example2.3}).

From \eqref{a1}, \eqref{b2} and \eqref{f1} we get $K_{12}^1=K_{22}^2=K_{33}^2=0$. Moreover, from \eqref{b3} we obtain $K_{13}^1=K_{23}^2$, and the other equations hold. Thus $K$ has non-trivial components

            \begin{align}
                K_{13}^1=K_{23}^2=\beta, 
				\quad K_{33}^{3}=\gamma,
				\quad \beta,\gamma\in\mathbb{R}. \label{xione*}
            \end{align}
One can see that the skewness operator $K$ given by \eqref{xione*} 
coincides with the one determined by \eqref{xione} with $\alpha=0$. 

We conclude that the Lie group 
$G$ admits non-trivial conjugate symmetric left invariant statistical structures 
when and only when its Lie algebra is 
$\mathfrak{g}(0,\eta)$ or $\mathfrak{g}(1,0)$.

\end{proof}


\section{Concluding remarks}\label{sec:9}
To close this article, we propose some problems:

\subsection{Totally geodesic maps}\label{sec:9.1}
Let $(M,\nabla)$ and 
$(N,{}^{N}\!\nabla)$ be affine manifolds. 
For a smooth map $\psi:M\to N$, 
we denote by $\psi^{*}TN$ the 
pull-back vector bundle of $TM$ by $\psi$.
The linear connection ${}^N\!\nabla$ induces a 
linear connection on $\psi^{*}TN$. The induced connection 
is denoted by $\nabla^{\psi}$. 
The \emph{second fundamental form} 
$\nabla\mathrm{d}\psi$ of $\psi$ is defined by
\[
(\nabla\mathrm{d}\psi)(Y;X)=
\nabla^{\psi}_{X}\mathrm{d}\psi(Y)-
\mathrm{d}\psi(\nabla_{X}Y),
\quad 
X,Y\in\mathfrak{X}(M).
\]
A smooth map $\psi$ is said to be a 
\emph{totally geodesic map} if its second 
fundamental form vanishes (see Vilms \cite{Vilms}).
 
\subsection{Harmonic maps on statistical manifolds}\label{sec:9.2}
In Riemannian geometry, harmonic map theory 
is a one of the central topic of geometric analysis. 
Moreover harmonic map theory is closely related to 
Theoretical 
Physics. Indeed 
harmonic maps of Riemann surfaces into Riemannian symmetric spaces 
are nothing but the \emph{Euclidean non-linear sigma models} 
in Theoretical Physics (see Zakrzewski's book \cite{Zak}).
Here we recall the notion of harmonic map.

Let $(M,g)$ and $(N,h)$ be Riemannian manifolds. 
Assume that $M$ is oriented by a volume element 
$\mathrm{d}v_g$. For a smooth map 
$\psi:M\to N$, its \emph{Dirichlet energy} over a compact region 
$\mathcal{D}\subset M$ is defined by
\[
E(\psi;\mathcal{D})=\int_{\mathcal D}\frac{1}{2}|\!|
\mathrm{d}\psi|\!|^2\,
\mathrm{d}v_g.
\]
A smooth map 
$\psi$ is said to be a \emph{harmonic map} if it is a critical 
point of the Dirichlet energy over any compact region of $M$. 
The Euler-Lagrange equation of the harmonic map is 
\[
\tau(\psi)=\mathrm{tr}_{g}(\nabla\mathrm{d}\psi)=0.
\]
Here $\nabla\mathrm{d}\psi$ is the second fundamental form of 
$\psi$. More precisely we regard $\psi$ as a map 
$\psi:(M,\nabla^{g})\to (N,\nabla^h)$. 
The section $\tau(\psi)$ of $\psi^{*}TN$ is called 
the \emph{tension field} of $\psi$.

Some mapping spaces do not contain any harmonic map.
For instance the space $C^{\infty}_1(\mathbb{T}^2,\mathbb{S}^2)$ of smooth maps 
from a $2$-torus into the $2$-sphere of mapping degree $1$ does not contain 
harmonic maps. On such a space, to look for good representatives, 
the study of bienergy was suggested. 
For a smooth map $\psi:(M,g)\to (N,h)$, the 
\emph{bienergy} of $\psi$ is introduced as
\[
E_{2}(\psi;\mathcal{D})=\int_{\mathcal D}
\frac{1}{2}|\!|\tau(\psi)|\!|^2\,\mathrm{d}v_g.
\]
A smooth map $\psi$ said to be a \emph{biharmonic map} if it is a critical 
point of the bienergy over any compact region of $M$.

Park \cite{Park} studied inner automorphisms 
of compact connected semi-simple Lie groups 
(equipped with a bi-invariant Riemannian metric) which are 
harmonic maps. Biharmonic homomorphims are studied in 
\cite{BoBo,BoOu}.

We may expect to study harmonic automorphims 
of statistical Lie groups. How to introduce the notion 
of harmonicity for automorphisms on statistical Lie groups ?

As we mentioned before, 
statistical manifolds are regarded as 
a Riemannian manifold equipped with a pair 
$(\nabla,\nabla^*)$ of linear connections satisfying 
\eqref{eq:conjugate}.
From this viewpint we concentrate our 
attention to smooth maps 
$\psi:(M,g,\nabla)\to (M,g,\nabla^{*})$. 
We equipe the connection ${}^{*}\nabla^{\psi}$ 
on $\psi^{*}TM$ induced from the conjugate 
connection $\nabla^{*}$ of $\nabla$. Then we define the \emph{statistical tension field} 
$\tau(\psi)$ by 
\[
\tau(\psi)=\mathrm{tr}_{g}(\nabla\mathrm{d}\psi)
=\sum_{i=1}^{m}
(\nabla\mathrm{d}\psi)(e_i,e_i),\quad m=\dim M,
\] 
where $\{e_1,e_2,\dots,e_m\}$ is a 
local orthonormal frame field of $M$ relative to the 
Riemannian metric $g$. 
Note that the statistical tension field 
is rewritten as 
\[
\tau(\psi)=\sum_{i=1}^{m}
\left(
{}^{*}\nabla^{\psi}_{e_i}\mathrm{d}\psi(e_i)-\mathrm{d}\psi(\nabla_{e_i}e_i)
\right).
\]
Here we propose the following definition.

\begin{Definition}
{\rm Let $M=(M,g,\nabla)$ be a statistical manifold. 
A smooth map $\psi:M\to M$ is said to be a 
\emph{statistically harmonic map} if its 
statistical tension field vanishes.
}
\end{Definition} 
 
\begin{Problem}
Study statistically harmonic automorphisms of statistical Lie groups.
\end{Problem} 
 
Let us now explain that the statistically 
harmonic map theory is markedly different from the harmonic map theory in 
Riemannian geometry. Indeed, the identity map $\mathrm{id}$ of a Riemannian manifold 
is harmonic. The stability of the identity map (as a harmonic map) 
has been paid much attention of differential geometers. 
The stability of identity maps of compact semi-simple Riemannian symmetric spaces 
has similarity to the Yang-Mills stability 
over compact semi-simple Riemannian symmetric spaces. 

However, in the case of statistical manifolds, 
one can see that the statistical tension field of 
the identity map of a statistical manifold 
is 
\[
\tau(\mathrm{id})=-2\mathrm{tr}_{g}K,
\]
where $K$ is the skewness operator. This formula implies 
that $M$ satisfies the apolarity if and only if $\mathrm{id}$ is statistically 
harmonic. On a statistical manifold satisfying $\tau(\mathrm{id})\not=0$, 
one may consider several alternative conditions.
For instance we may study 
statistical manifolds with \emph{parallel} $\tau(\mathrm{id})$, 
\textit{i.e.}, $\nabla \tau(\mathrm{id})=0$ (or 
$\nabla\tau(\mathrm{id})=\nabla^{*}\tau(\mathrm{id})=0$). 

It should be remarked that when $M$ is a Hessian manifold, 
then its Hessian curvature tensor field $H$ is defined by
\[
H(X,Y)Z=\frac{1}{2}(\nabla_{X}K)(Y,Z).
\]
Another candidate is the biharmonicity. 
For a smooth map $\psi:M\to M$, one may consider 
the bienergy like as Riemannian geometry by:
\[
E_{2}(\psi;\mathcal{D})=\int_{\mathcal D}
\frac{1}{2}|\!|\tau(\psi)|\!|^2\,\mathrm{d}v_g.
\]
\begin{Problem}
Study statistically biharmonic automorphisms of statistical Lie groups.
\end{Problem} 

\subsection{Homogeneous holomorphically statistical manifolds}
In Section \ref{sec:2.3} we discussed homogeneous Sasakian statistical manifolds. 
On the other hand in even-dimensional geometry, the notion of 
holomorphic statistical manifold was introduced in the following manner
(see \textit{eg.} \cite{Mirja,Mirja2}).

\begin{Definition}{\rm 
Let $(M,g,J)$ be a K{\"a}hler manifold. 
A torsion free linear connection $\nabla$ is compatible to $(g,J)$ is 
said to be \emph{compatible} if 
$C:=\nabla g$ is totally symmetric and 
the K{\"a}hler form 
$\varOmega=g(\cdot,J)$ is parallel with respect to $\nabla$. 
Such a triplet $(g,J,\nabla)$ is called a 
\emph{holomorphic statistical structure}. 
A manifold $M$ equipped with a holomorphic statistical structure 
is called a \emph{holomorphic statistical manifold}.
}
\end{Definition}
According this definition, 
a holomorphic statistical manifold 
$(M,g,J,\nabla)$ is said to be 
\emph{homogeneous} if there exits a 
Lie group $G$ acting transitively on $M$ so that 
the action is isometric with respecto to $g$, 
holomorphic with respect to 
$J$ and affine with respect to $\nabla$.

\begin{Problem}
Construct explicit examples 
of homogeneous holomorphic statistical manifolds.
\end{Problem} 
Let $(M,g,J)$ be a K{\"a}hler manifold and $\nabla$ a 
linear connection on $M$. Then one can see 
that $(g,J,\nabla)$ is a holomorphic statistical structure if and only if 
$K=-2(\nabla-\nabla^g)$ satisfies 
\[
K(X,Y)=K(Y,X),
\quad 
g(K(X,Y),Z)=g(Y,K(X,Z)),
\quad 
K(X,JY)+JK(X,Y)=0.
\]
This is rewritten as
\[
C(X,Y,Z)\>\mbox{is totally symmetric and}
\quad 
C(X,JY,Z)=C(X,Y,JZ).
\]
Then by virtue of Sekigawa-Kirichenko theorem,
we obtain
\begin{Corollary}
Let $(M,g,J,C)$ be a K{\"a}hler manifold equipped 
with a cubic form $C$ satisfying 
$C(X,JY,Z)=C(X,Y,JZ)$. 
Set $\nabla=\nabla^g-K/2$, where 
$C(X,Y,Z)=g(K(X,Y),Z)$. 
If $(M,g,J,\nabla)$ is is homogeneous 
holomorphic statistical manifold, then 
there exists a homogeneous Riemannian structure 
$S$ satisfying 
$\tilde{\nabla}C=0$ and $\tilde{\nabla}J=0$ with 
respect to 
the canonical connection $\tilde{\nabla}=\nabla+S$.

Conversely, let $(M,g,J,\nabla)$ be  
a holomorphic statistical manifold with 
cubic form $C$ admitting a homogeneous Riemannian
structure $S$ satisfying 
$\tilde{\nabla}C=0$ and $\tilde{\nabla}J=0$ with 
respect to 
the canonical connection $\tilde{\nabla}=\nabla+S$, then 
$(M,g,J,\nabla)$ is locally homogeneous.
\end{Corollary}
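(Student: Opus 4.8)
The plan is to follow the proof of Proposition~\ref{prop:2.2.1} almost verbatim, now carrying the complex structure $J$ alongside the cubic form $C$, and to apply the theorem of Sekigawa and Kirichenko to the $G$-structure determined by the \emph{pair} of $g$-compatible tensor fields $(J,C)$ rather than to $C$ alone. The whole point is that a holomorphic statistical manifold is a Riemannian manifold equipped with two compatible tensor fields, and both must be rendered parallel by one and the same homogeneous Riemannian structure $S$.

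For the forward direction I would first observe that a homogeneous holomorphic statistical manifold is, in particular, a reductive homogeneous space $M=G/H$ on which $G$ acts by isometries that are holomorphic (hence preserve $J$) and affine (hence preserve $\nabla$, and therefore also $C=\nabla g$). Thus $g$, $J$, and $C$ are all $G$-invariant tensor fields. I would then introduce the canonical connection $\tilde\nabla=\nabla^{\mathrm c}$ of the reductive decomposition $\mathfrak g=\mathfrak h\oplus\mathfrak m$ recalled in Section~\ref{sec:2.1}. Since every $G$-invariant tensor field is parallel with respect to $\nabla^{\mathrm c}$, one obtains at once $\tilde\nabla g=0$, $\tilde\nabla J=0$, and $\tilde\nabla C=0$; setting $S=\tilde\nabla-\nabla^g$ then produces the desired homogeneous Riemannian structure, exactly as in the purely statistical case.

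For the converse I would invoke the Sekigawa--Kirichenko theorem with the compatible tensor field taken to be $F=(J,C)$, i.e.\ with structure group the common stabilizer of $J$ and $C$ inside the orthogonal group $\mathrm{O}(\mathfrak m,g_o)$. The hypothesis that a single homogeneous Riemannian structure $S$ satisfies $\tilde\nabla J=0$ and $\tilde\nabla C=0$ is precisely the statement that $F$ is $\tilde\nabla$-parallel, so the theorem yields that $(g,J,C)$ is locally homogeneous. Because $\nabla=\nabla^g-\frac{1}{2}K$ is completely determined by $g$ and $C$ through $g(K(X,Y),Z)=C(X,Y,Z)$, local homogeneity of $(g,J,C)$ is the same as local homogeneity of $(g,J,\nabla)$, which is the assertion.

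The main obstacle is the \emph{simultaneity}: the Sekigawa--Kirichenko theorem as quoted concerns a single compatible tensor field, whereas here $J$ and $C$ must be handled by one common connection $\tilde\nabla$. In the forward direction this costs nothing, since the canonical connection of a reductive space annihilates \emph{all} $G$-invariant tensors at once. In the converse it is the delicate point: one must check that the local isometries produced by the Ambrose--Singer construction preserve $J$ and $C$ together. This is exactly where the common $\tilde\nabla$ enters---because both tensors are parallel for the \emph{same} connection, the transvections generated along $\tilde\nabla$-geodesics transport $(J,C)$ to itself---and making this joint preservation explicit, rather than applying the theorem twice with a priori different structures, is the step I expect to require the most care.
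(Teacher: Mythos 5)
Your proposal is correct and follows essentially the same route as the paper, which proves this corollary simply by invoking the Sekigawa--Kirichenko theorem for a metric-compatible tensor field (here the pair $(J,C)$) exactly as in Proposition \ref{prop:2.2.1}; your forward direction via the canonical connection of the reductive decomposition and your converse via the joint $\tilde{\nabla}$-parallelism of $J$ and $C$ are precisely the intended argument, merely spelled out in more detail than the paper provides.
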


\subsection{Geodesics on homogeneous statistical manifolds}
Up to now there is no notion of 
``symmetric space" for statistical manifolds.
Assume that $M=G/H$ be a reductive homogeneous 
statistical manifold and $G$ acts effectively on $M$. 
From algebraic point of view, 
we may propose the following definition.
\begin{Definition}
Let $M=G/H$ be homogeneous statistical manifold. 
Then $M$ is said to be a 
\emph{statistical symmetric space} if it admits a 
reductive decomposition 
$\mathfrak{g}=\mathfrak{h}
+\mathfrak{m}$ satisfying 
\[
[\mathfrak{m},\mathfrak{m}]\subset 
\mathfrak{h}.
\] 
\end{Definition}
In Riemannian geometry, 
local symmetry is characterized as 
the parallelism of the Riemannian curvature 
with respect to the Levi-Civita connection. 
Moreover local symmetry is equivalent to 
that all the local geodesic 
symmetries are isometric.

\begin{Problem}
Clarify geometric meaning 
of the statistical symmetry 
in terms of curvature or geodesic.
\end{Problem}

In information geometry, 
geodesics with respect to $\nabla$ and 
$\nabla^*$ play important roles. 
For instance the generalized 
Pythagorean-theorem for dually flat 
statistical manifolds is 
well known. 

On the other hand, from dynamical system viewpoint 
(or analytical mechanics), 
integrability of geodesic flows is 
an important issue.

In Riemannian geometry, 
\emph{geodesic orbit spaces} have been 
paid 
much attention of homogeneous geometers.

A reductive homogeneous Riemannian space 
$M=G/H$ is said to be a 
\emph{geodesic orbit space} 
(Riemannian g.~o.~space) if 
its all the geodesic starting at the origin 
$o$ are homogeneous, that is, all those geodesics 
have the form 
$\exp(tX)\cdot o$, where $X\in\mathfrak{g}$.
The geodesic flows of a Riemannian g.~o.~space is
integrable. 
Riemannian symmetric spaces are Riemannian 
g.~o.~spaces.

\begin{Problem}
Classify homogeneous 
statistical manifolds 
whose $\nabla$-geodesics and 
$\nabla^*$-geodesics are homogeneous.
\end{Problem}

\bibliographystyle{plain}
\def\cprime{$'$}

\end{document}